\newtheorem{theorem}{Theorem}
\numberwithin{theorem}{section}
\newtheorem{corollary}[theorem]{Corollary}
\newtheorem{lemma}[theorem]{Lemma}
\newtheorem{proposition}[theorem]{Proposition}
\theoremstyle{definition}
\newtheorem{definition}[theorem]{Definition}
\newtheorem{remark}[theorem]{Remark}
\numberwithin{equation}{section}
\newcommand{\atrs}{\mathsf{ATR_0^{set}}}
\newcommand{\prs}{\mathsf{PRS}\omega}
\newcommand{\otp}{\mathsf{otp}}
\newcommand{\lo}{\mathsf{LO}}
\newcommand{\nat}{\mathsf{Nat}}
\newcommand{\supp}{\operatorname{supp}}
\newcommand{\rng}{\operatorname{rng}}
\newcommand{\en}{\operatorname{en}}
\newcommand{\tr}{\operatorname{Tr}}
\title[Induction on Dilators and Bachmann-Howard Fixed Points]{Induction on Dilators and\\ Bachmann-Howard Fixed Points}
\author[J.~Aguilera]{Juan P.\ Aguilera}
\author[A.~Freund]{Anton Freund}
\author[A.~Weiermann]{Andreas Weiermann}
\address{Juan P.\ Aguilera, Institut f\"ur diskrete Mathematik und Geometrie, Technische Universit\"at Wien, Wiedner Hauptstrasse 8-10, 1040 Wien, Austria}
\email{aguilera@logic.at}
\address{Anton Freund, University of W\"urzburg, Institute of Mathematics, Emil-Fischer-Str.~40, 97074 W\"urz\-burg, Germany}
\email{anton.freund@uni-wuerzburg.de}
\address{Andreas Weiermann, Department of Mathematics: Analysis, Logic and Discrete Mathematics, Ghent University, Krijgslaan 281 S8, 9000 Ghent, Belgium}
\email{andreas.weiermann@ugent.be}
\thanks{Work on the present paper has been supported by FWF grant STA-139 (Aguilera) as well as DFG project 460597863~(Freund).}
\begin{document}

\begin{abstract}
One of the most important principles of J.-Y.~Girard's $\Pi^1_2$-logic is induction on dilators. In particular, Girard used this principle to construct his famous functor~$\Lambda$. He claimed that the totality of $\Lambda$ is equivalent to the set existence axiom of $\Pi^1_1$-comprehension from reverse mathematics. While Girard provided a plausible description of a proof around 1980, it seems that the very technical details have not been worked out to this day. A few years ago, a loosely related approach led to an equivalence between $\Pi^1_1$-comprehension and a certain Bachmann-Howard principle. The present paper closes the circle. We relate the Bachmann-Howard principle to induction on dilators. This allows us to show that $\Pi^1_1$-comprehension is equivalent to the totality of a functor~$\mathbb J$ due to P.~P\"apping\-haus, which can be seen as a streamlined version of~$\Lambda$.
\end{abstract}

\keywords{Induction on Dilators, Bachmann-Howard fixed points, Well-Ordering Principles, $\Pi^1_1$-Comprehension, Reverse Mathematics}
\subjclass[2020]{03B30, 03D60, 03F15, 03F35}

\maketitle

\section{Introduction}

It is known that the notion of well order is $\Pi^1_1$-complete: given a $\Pi^1_1$-statement~$\varphi$, we can compute a countable linear order that is well-founded precisely if~$\varphi$ holds, provably in a weak theory. Analogously, $\Pi^1_2$-statements can be characterized by transformations of well orders. In fact, it suffices to consider very uniform transformations called dilators, which have been singled out by J.-Y.~Girard~\cite{girard-pi2}. Specifically, a dilator is an endofunctor on the category of well orders and order embeddings that preserves direct limits and pullbacks (see Section~\ref{sect:prelim-dil} for detailed explanations). Dilators are determined by their values on finite orders and embeddings. Thus they are essentially set-sized objects, even though the category of well orders is a proper class. Dilators that map finite arguments to countable values are essentially countable, so that we can ask whether they are computable. We can now make the above analogy more precise: given a $\Pi^1_2$-statement~$\psi$, we can compute an endofunctor~$D$ of linear orders such that $\psi$ holds precisely if~$D$ is a dilator, i.\,e., if $D(X)$ is well-founded for any well order~$X$ (see~\cite[Annex~8.E]{girard-book-part2} for D.~Normann's proof of this result of Girard). It turns out that many natural $\Pi^1_2$-statements (e.\,g., from reverse mathematics) correspond to natural dilators, which arise from classical notation systems studied in ordinal analysis (see the survey by M.~Rathjen~\cite{rathjen-axiomatic-thinking}).

To characterize a genuine $\Pi^1_3$-statement -- like the $\Pi^1_1$-comprehension axiom from reverse mathematics -- one needs to move up another type-level. More explicitly, one must now consider transformations that take a dilator as input and produce either another dilator or a well order as output. In the following, the so-called Bachmann-Howard fixed point~$\vartheta D$ is a linear order that can be computed from a given countable dilator~$D$ (note that only countable dilators occur in the setting of reverse mathematics).

\begin{theorem}[{\cite{freund-equivalence,freund-computable}}]\label{thm:theta}
The following are equivalent over~$\mathsf{RCA}_0$:
\begin{enumerate}[label=(\roman*)]\label{thm:bachmann}
\item $\Pi^1_1$-comprehension,
\item for any dilator~$D$, the order~$\vartheta D$ is well-founded.
\end{enumerate}
\end{theorem}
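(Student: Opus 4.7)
The plan is to prove the two directions separately. For (i)$\Rightarrow$(ii) the natural approach is a semantic collapse into admissible ordinals, while for (ii)$\Rightarrow$(i) one reduces $\Pi^1_1$-comprehension to an equivalent well-ordering principle and realises it by an explicit family of dilators.

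For the forward direction, I would fix a dilator $D$ and work inside a countable coded model that contains $D$ and reflects sufficient admissibility; such models are available under $\Pi^1_1$-comprehension via the existence of the hyperjump and the usual equivalences with the existence of countable coded $\beta$-models. Inside that model one can form a ``next admissible'' ordinal $\alpha$ relative to $D$, large enough that $D$ restricts to an operation on $\alpha$. I would then define a collapse map $\pi\colon \vartheta D\to\alpha$ by recursion on term complexity: a Bachmann-Howard term $\vartheta(\sigma)$ is sent to the least $\xi<\alpha$ strictly above the images under $\pi$ of all subterms of $\sigma$ and distinct from all previously chosen values. Admissibility together with direct-limit preservation of $D$ guarantees that a suitable $\xi$ exists at every stage, and order preservation follows by induction on joint term complexity from the definition of the order on $\vartheta D$. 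Well-foundedness of $\alpha$ then transports back to $\vartheta D$.

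For the converse, I would use the formulation of $\Pi^1_1$-comprehension as the statement that every set has a hyperjump, or equivalently that every set is contained in a countable coded $\beta$-model of a sufficiently strong base theory. Given a set $X$, the aim is to construct a dilator $D_X$ whose Bachmann-Howard fixed point $\vartheta D_X$ computably codes enough $\Pi^1_1$-truth in $X$ that, once $\vartheta D_X$ is known to be well-founded, the hyperjump of $X$ can be extracted by inspecting which terms actually occur. The key move is to build $D_X$ from the Kleene--Brouwer presentations of trees associated with a universal $\Pi^1_1(X)$-formula, and to use the Bachmann-Howard collapse to internalise a ``next admissible above $X$'', in the spirit of the well-ordering principle characterisations developed in~\cite{freund-equivalence}.

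The main obstacle lies in this reverse direction: one must simultaneously verify that the coded transformation $D_X$ really is a dilator in $\mathsf{RCA}_0$ (preservation of direct limits and of pullbacks, not merely of embeddings) and that a set witnessing $\Pi^1_1$-comprehension can be read off from $\vartheta D_X$ by an arithmetical procedure uniform in $X$. Arranging the bookkeeping so that both tasks succeed at the same time is precisely where the interplay between dilator uniformity and Bachmann-Howard collapsing carries the technical weight of the theorem.
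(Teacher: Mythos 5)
This theorem is not proved in the paper at all: it is imported from the cited works \cite{freund-equivalence,freund-computable}, so your attempt can only be measured against those proofs. Against them, both halves of your sketch have gaps, and the second one is serious. For (i)$\Rightarrow$(ii), the overall strategy (collapse $\vartheta D$ into the next admissible above $D$) is indeed the one used in the literature, but your collapse --- send $\vartheta(\sigma)$ to the least $\xi<\alpha$ above the images of the subterms of $\sigma$ and distinct from previously chosen values --- never consults the comparison of $\sigma$ and $\tau$ inside $D(\cdot)$. The order on Bachmann-Howard terms is determined jointly by subterm conditions \emph{and} by the order of the arguments in $D$, so ``order preservation follows by induction on joint term complexity'' does not go through for your assignment. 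What is actually needed is a collapsing function satisfying the two fixed-point conditions of the kind recorded in Definition~\ref{def:psi-fp} (conditions (\ref{psi-fp1}) and (\ref{psi-fp2})), and the real work in the cited proof is to produce such a function by $\Sigma$-recursion inside an admissible set containing $D$; your sketch replaces precisely this step by an assertion.

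For (ii)$\Rightarrow$(i) the proposal would fail as stated. Well-foundedness of the single order $\vartheta D_X$ is one $\Pi^1_1(X)$ fact about one $X$-computable linear order; no arithmetical procedure that merely ``inspects which terms actually occur'' can extract the hyperjump of $X$ from it. In the cited proofs the reversal does not code a universal $\Pi^1_1(X)$-formula via Kleene-Brouwer orders at all: one builds a dilator from a term system for a constructible-style hierarchy over $X$, uses the well-founded Bachmann-Howard fixed point as the ordinal skeleton of an explicit construction of an admissible set containing $X$ (the two collapse conditions are exactly what verifies $\Delta_0$-collection), and then obtains $\Pi^1_1$-comprehension from the equivalence with ``every set lies in an admissible set'' --- the route this paper itself exploits in Theorem~\ref{thm:Pi11CA-to-J} and Corollary~\ref{cor:main-reversal}. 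You correctly identify that the reverse direction is where the weight lies, but the construction that carries that weight is missing from your proposal, and the specific extraction mechanism you propose is not the right one.
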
 

The proof of the theorem follows an approach that Rathjen~\cite{rathjen-atr} suggested in a paper published in 2014. More than 30 years earlier, Girard~\cite{girard-book-part2} had sketched a similar result. While his approach is plausible, it is also very technical and was apparently never worked out in full detail. Even the statement of Girard's result in the setting of reverse mathematics is not entirely unproblematic. Indeed, the result involves a certain functor from dilators to dilators, which is denoted by~$\Lambda$. Roughly speaking, this functor embodies an iteration of binary exponentiation along dilators. However, the precise definition of $\Lambda$ is complicated even in a set-theoretic setting. It is reasonable to expect that $\Lambda$ can be defined in second-order arithmetic, but it seems that this was never made explicit.

In the present paper, we show that $\Pi^1_1$-comprehension is equivalent to the totality of a functor~$\mathbb J$ due to P.~P\"appinghaus~\cite{paeppinghaus-bachmann}. This functor can be seen as a streamlined version of~$\Lambda$, which avoids some technicalities while the conceptual challenge remains the same. To further facilitate the presentation, we prove the equivalence not over second-order arithmetic but in a set theory~$\atrs$ due to S.~Simpson~\cite{simpson82}. This theory is conservative over the system $\mathsf{ATR}_0$ from reverse mathematics, so that it is weak enough to make a characterization of~$\Pi^1_1$-comprehension informative. The definition of $\atrs$ will be recalled in Section~\ref{sect:prs}. For now, we just mention that we include the axiom of countability (`all sets are countable'), which~\cite{simpson82} describes as optional. This axiom seems natural from the viewpoint of second-order arithmetic. It simplifies the formulation of our results but can be avoided in favour of explicit countability assumptions.

To describe the definition of~$\mathbb J$, we recall that Girard has classified dilators into four kinds. As a first approximation (see Section~\ref{sect:prelim-dil} for the official definition), we focus on countable dilators (say in a model of $\atrs$) but assume that we have access (outside of such a model) to the least uncountable cardinal, which we denote by~$\Omega$. Then each dilator~$D$ validates precisely one of the following:
\begin{enumerate}
\item We have $D(\Omega)=0$, and then $D$ is constant zero.
\item The ordinal $D(\Omega)$ is a successor, and we get $D=D'+1$ for a dilator~$D'$.
\item Our value $D(\Omega)$ is a limit ordinal of countable cofinality, and then we can write $D=\sum_{\alpha<\lambda}D_\alpha$ for a countable limit~$\lambda$ and dilators~$D_\alpha\neq 0$.
\item The ordinal~$D(\Omega)$ is a limit of uncountable cofinality, and it follows that we have $D(\Omega)=\sup_{\alpha<\Omega}\{D\}^\alpha(\Omega)$ for certain dilators~$\{D\}^\alpha$.
\end{enumerate}
The sum in~(3) is unique if we demand that the $D_\alpha$ are connected, i.\,e., cannot be writen as sums themselves. When~(4) applies, we say that $D$ is of type~$\Omega$. In each case, $D$ is approximated by predecessors with a smaller value on~$\Omega$ (the dilators~$D'$ and $\sum_{\alpha<\nu}D_\alpha$ for~$\nu<\lambda$ as well as $\{D\}^\alpha$ for~$\alpha<\Omega$). This suggests principles of induction and recursion over dilators, which were identified by Girard~\cite{girard-pi2}.

Now the functor~$\mathbb J$ of P\"appinghaus (see~\cite{paeppinghaus-bachmann} with $g=1$) maps a dilator~$D$ and an ordinal~$\gamma$ to the value~$\mathbb J(D,\gamma)$ that is recursively explained by
\begingroup
\allowdisplaybreaks
\begin{align*}
\mathbb J(0,\gamma)&=\gamma,\\
\mathbb J(D+1,\gamma)&=\mathbb J(D,\gamma)+1,\\
\mathbb J\left(\textstyle\sum_{\alpha<\lambda}D_\alpha,\gamma\right)&=\textstyle\sup_{\nu<\lambda}\mathbb J\left(\textstyle\sum_{\alpha<\nu}D_\alpha,\gamma\right)\!,\\
\mathbb J(D,\gamma)&=\alpha+\mathbb J\left(\{D\}^\alpha,\gamma\right)\text{ with }\alpha=\mathbb J\left(\{D\}^0,\gamma\right)\!.
\end{align*}
\endgroup
In the third of these clauses, it is understood that~$\lambda$ is a limit ordinal and that the dilators $D_\alpha$ are connected. The fourth clause applies when~$D$ is of type~$\Omega$.

As mentioned above, we want to show that $\Pi^1_1$-comprehension is equivalent to the statement that $\mathbb J$ is total, provably in~$\atrs$. For this purpose, we must first define~$\mathbb J$ in~$\atrs$ without constructing it as a total function. Let us draw an analogy with the classical result that $\Pi_2$-reflection over Peano arithmetic~($\mathsf{PA}$) is equivalent to the totality of certain functions~$F_{\alpha}:\mathbb N\to\mathbb N$ for all~$\alpha<\varepsilon_0$, provably in~$\mathsf{PA}$. The most straightforward definition of~$F_\alpha$ is by transfinite recursion on~$\alpha$. While the latter is not available in~$\mathsf{PA}$, it is known that the relation $F_\alpha(x)=y$ (not the function~$(\alpha,x)\mapsto F_\alpha(x)$) has a primitive recursive definition for which~$\mathsf{PA}$ proves `local correctness'. More explicitly, $\mathsf{PA}$ proves that the clauses of the transfinite recursion hold whenever the relevant values of~$F_\alpha$ are defined (see Section~5.2 of~\cite{sommer95}). We take this analogy as guidance for a definition of~$\mathbb J$ in the theory~$\atrs$.

In the following, `primitive recursive' always refers to the primitive recursive set functions of R.~Jensen and C.~Karp~\cite{jensen-karp}, which are available in~$\atrs$ (see Section~\ref{sect:prs}). While the general principle of recursion over dilators goes beyond~$\atrs$ (e.\,g., it yields~$\mathbb J$ as a total function), we will see that primitive recursion is enough to justify a certain form of `guarded' recursion over dilators. Using the latter, we obtain a locally correct definition of~$\mathbb J$ in the form of a primitive recursive relation~$\mathbb J(D,\gamma)\simeq\eta$ (see Sections~\ref{sect:prs} to~\ref{sect:J}). Independently of the application to~$\mathbb J$, we view guarded induction and recursion over dilators as an important contribution of the present paper.

Now that the definition of~$\mathbb J$ in a suitable metatheory has been explained, we can state our main result.

\begin{theorem}\label{thm:J-Pi11CA}
The following are equivalent over~$\atrs$:
\begin{enumerate}[label=(\roman*)]
\item $\Pi^1_1$-comprehension,
\item the functor~$\mathbb J$ is total, i.\,e., for any dilator~$D$ and any ordinal~$\gamma$ there is an ordinal~$\eta$ such that we have $\mathbb J(D,\gamma)\simeq\eta$.
\end{enumerate}
\end{theorem}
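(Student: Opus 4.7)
The strategy is to exploit Theorem~\ref{thm:theta} as a bridge, reducing the desired equivalence to a comparison between the P\"appinghaus ordinal $\mathbb J(D,\gamma)$ and the Bachmann--Howard fixed point $\vartheta D$. Concretely, I plan to establish in $\atrs$ that (a)~$\Pi^1_1$-comprehension implies totality of~$\mathbb J$, and (b)~totality of~$\mathbb J$ implies well-foundedness of $\vartheta D$ for every dilator~$D$; the latter then yields $\Pi^1_1$-comprehension via Theorem~\ref{thm:theta}.

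For direction~(a), I would argue that $\Pi^1_1$-comprehension validates Girard's general recursion principle on dilators and, in particular, suffices to form the suprema that appear in the sum and type-$\Omega$ clauses of the definition of $\mathbb J$. The locally correct primitive recursive relation $\mathbb J(D,\gamma)\simeq\eta$, constructed earlier in the paper, reduces totality to the statement that some witness $\eta$ exists for each pair $(D,\gamma)$. This is then proved by induction on dilators, which is available under $\Pi^1_1$-comprehension, working through the four-part classification of~$D$ by the value of~$D(\Omega)$: the base case and the successor step are trivial, while the sum and type-$\Omega$ cases reduce to closure under the appropriate suprema. The forward direction should therefore largely amount to adapting standard arguments about induction on dilators to the set-theoretic setting of~$\atrs$.

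For direction~(b), the plan is to construct, from an arbitrary countable dilator~$D$, an auxiliary dilator $D^\ast$ and an ordinal $\gamma$ together with a primitive recursive order embedding $\vartheta D\hookrightarrow\mathbb J(D^\ast,\gamma)$. The guiding intuition is that the inductive buildup of Bachmann--Howard terms $\vartheta\sigma$ mirrors the step-by-step unfolding of the recursion defining~$\mathbb J$: successor clauses account for terms $\alpha+1$, sum clauses handle sums of notations, and the type-$\Omega$ clause implements the collapsing step that creates a new term $\vartheta\sigma$ below~$\Omega$. Given such an embedding, totality of~$\mathbb J$ furnishes an ordinal of which $\vartheta D$ is an initial segment, which forces $\vartheta D$ to be well-founded.

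The main obstacle will be the construction of~$D^\ast$ and the embedding in direction~(b). One needs a uniform procedure that, given a Bachmann--Howard term $\vartheta\sigma$, reads off a stage in the transfinite recursion computing $\mathbb J(D^\ast,\gamma)$ at which this term is generated; this stage must itself respect the comparison order on Bachmann--Howard notations. Ensuring that $D^\ast$ really is a dilator (preservation of direct limits and pullbacks), that the embedding is primitive recursive in the sense of~\cite{jensen-karp}, and that the traces $\{D^\ast\}^\alpha$ in the type-$\Omega$ clause accurately track the subterms of collapsed notations, are the technical points that I expect will occupy most of the work.
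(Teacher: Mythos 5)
Your high-level architecture (use Theorem~\ref{thm:theta} as a bridge; prove totality from $\Pi^1_1$-comprehension; bound the collapsing order by a value of $\mathbb J$ at an auxiliary dilator) matches the paper's, but both directions have genuine gaps where you defer exactly the work that constitutes the proof. For (i)$\Rightarrow$(ii), appealing to ``Girard's general recursion principle on dilators'' is not enough: that principle is not available in $\atrs$, and the paper never invokes it. What the paper does (Theorem~\ref{thm:Pi11CA-to-J}) is use $\Pi^1_1$-comprehension in the form ``every set lies in an admissible set $\mathbb A$'', verify that the partial relation $\mathbb J(D,\gamma)\simeq\delta$ is correctly captured by its restriction $\mathbb J\restriction\mathbb A$ (which needs that order types of well orders in $\mathbb A$ lie in $\mathbb A$), and then run \emph{guarded} induction with predecessors from $P_\Omega(D)$, $\Omega=\mathbb A\cap\mathrm{Ord}$, using $\Sigma$-replacement inside $\mathbb A$ to produce the function $f$ in the type-$\omega$ clause. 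Saying the sum and type-$\Omega$ cases ``reduce to closure under the appropriate suprema'' presupposes precisely this collection mechanism, which is not available in $\atrs$ itself.

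For (ii)$\Rightarrow$(i) the gap is more serious. You propose to embed the term system $\vartheta D$ directly into the unfolding of the recursion for $\mathbb J(D^\ast,\gamma)$, and you correctly flag this as the main obstacle, but no construction of $D^\ast$ or of the embedding is given, and the route via $\vartheta$ is the one the paper deliberately avoids: since $\mathbb J$ is only accessible through its recursion clauses, one needs a collapsing order that itself admits a characterization by recursion on the dilator. The paper therefore replaces $\vartheta D$ by the $\psi$-fixed point $\psi D$ of~\cite{freund-rathjen-iterated-Pi11} and spends Section~\ref{sect:Bachmann-rec} deriving recursive clauses for $\psi D^\gamma$ simultaneously in $\gamma$ (sums, limit sums, and for connected $D\neq 1$ the formula $\psi D^\gamma=\sum_{n<\omega}\gamma(n+1)$ via the decomposition $D^\gamma=D^\gamma_-+D^\gamma_+$, Theorem~\ref{thm:psi-connected}). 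Even then a direct comparison with $\mathbb J$ does not work as stated: the paper needs the variant $\mathbb J'$ (with $\{D\}^\omega$ in place of $\{D\}^0$), the values $\mathbb J^+(D,\gamma)=\mathbb J'(\omega\circ(D+1),\gamma)$ to secure additive principality and closure under $\alpha\mapsto\mathbb J^+(\{D\}^\alpha,\alpha)$, and the bound $\mathbb J'(D,\gamma)\lesssim\mathbb J(D\cdot 8,\gamma)$ so that totality of $\mathbb J$ yields totality of the variant; only then does guarded induction give $\gamma+\psi D^\gamma\lesssim\mathbb J^+(D,\gamma)$ and hence Corollary~\ref{cor:main-reversal}. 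Your intuition that the $\vartheta$-terms mirror the unfolding of $\mathbb J$ is sound, but it is exactly the content of Sections~\ref{sect:Bachmann-rec} and~\ref{sect:comparison}, not a verification one can expect to be routine; as it stands the proposal asserts the existence of the required embedding rather than proving it.
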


The previous theorem will be derived from Theorem~\ref{thm:theta}. In the latter, we may replace~$\vartheta D$ by a related order~$\psi D$ (see~\cite{freund-rathjen-iterated-Pi11}). Let us note that $\vartheta$ and~$\psi$ refer to two variants of a collapsing function studied in ordinal analysis~\cite{rathjen-weiermann-kruskal}. The order~$\psi D$ has the advantage that it can be characterized by recursion on the dilator~$D$. This will be shown in Section~\ref{sect:Bachmann-rec}, which builds on work of V.\,M.~Abrusci, Girard and J.~van de Wiele~\cite{increasing-goodstein} as well as P.~Uftring~\cite{uftring-inverse-goodstein} (the latter stimulated by a suggestion of A.~Weiermann). Once we have recursive definitions of the ordinals~$\mathbb J(D,\gamma)$ and~$\psi D$, we can use induction to prove comparisons between them (see~Section~\ref{sect:comparison}). In particular, a bound on~$\psi D$ in terms of~$\mathbb J$ will be used to show that~(ii) implies~(i) in Theorem~\ref{thm:J-Pi11CA} (see Corollary~\ref{cor:main-reversal}). The converse implication admits a more direct proof, which shows that admissible sets are closed under~$\mathbb J$ (see Theorem~\ref{thm:Pi11CA-to-J}).

\section{Preliminaries~I: Primitive Recursive Set Theory}\label{sect:prs}

In the present section, we recall basic properties of the primitive recursive set functions and the definition of our base theory~$\atrs$. In particular, we note that primitive recursion suffices to implement recursion over well-founded relations with effectively given predecessors and ranks. The latter will later be used to justify a `guarded' version of recursion over dilators.

The construction of the primitive recursive set functions, which were singled out by Jensen and Karp~\cite{jensen-karp}, starts with the base functions
\begin{align*}
x&\mapsto 0,& (x_0,\ldots,x_n)&\mapsto x_i\text{ for }i\leq n,\\
(x_0,x_1)&\mapsto x_0\cup\{x_1\},& (x_0,x_1,x_2,x_3)&\mapsto\begin{cases}
x_0 & \text{if }x_2\in x_3,\\
x_1 & \text{otherwise}.
\end{cases}
\end{align*}
To obtain all primitive recursive set functions, one closes under composition and recursion over set membership. More precisely, if the function $F,G_i$ and~$H$ are primitive recursive of suitable arity, then so are
\begin{equation*}
\mathbf x\mapsto F(G_0(\mathbf x),\ldots,G_n(\mathbf x))
\end{equation*}
and the unique function~$R$ determined by
\begin{equation}\label{eq:prim-rec}
R(x,\mathbf y)=H\left(\textstyle\bigcup\{R(v,\mathbf y)\,|\,v\in x\},x,\mathbf y\right).
\end{equation}
Note that we use bold letters for tuples. One can derive a version of~(\ref{eq:prim-rec}) that has $R(\cdot,\mathbf y)\restriction x$ as the first argument of~$H$. We say that a class or relation is primitive recursive if it has the form~$\{\mathbf x\,|\,0\in F(\mathbf x)\}$ for some primitive recursive~$F$.  Let us agree that primitive recursive functions may depend on the fixed parameter~$\omega$ (which is useful, e.\,g., to construct transitive closures). Effectively, we thus add a new base function with constant value~$\omega$.

We now recall primitive recursive set theory, as introduced by Rathjen~\cite{rathjen-set-functions}. Its language consists of the binary relation symbols~$\in$ and~$=$ as well as function symbols for all primitive recursive set functions. More precisely, each construction of a primitive recursive set function according to the generating clauses from above corresponds to a function symbol. Each function symbol gives rise to a defining axiom, which reflects the corresponding clause. Primitive recursive set theory with infinity~($\prs$) consists of these defining axioms together with the axioms of equality, extensionality, foundation and infinity. One can derive pairing, union and $\Delta_0$-separation. Let us note that a relation is $\Delta_0$-definable precisely if it is primitive recursive. Using foundation and $\Delta_0$-separation, one can derive $\in$-induction for primitive recursive properties. We refer to Sections~1.1 and~1.2 of~\cite{freund-thesis} for details and further fundamental constructions.

The following principle of induction along an $\in$-ranked predecessor relation will later be used to justify a guarded form of induction over dilators. Its proof is very similar to the proof of an analogous recursion principle from~\cite{freund-thesis}, which we recall below. Here and in the following, we allow implicit parameters, i.\,e., functions may have free variables as suppressed arguments.

\begin{proposition}[$\prs$]\label{prop:pred-ind}
Consider primitive recursive functions~$P$ and~$R$ such that $x\in P(y)$ entails~$R(x)\in R(y)$. If $Q$ is a primitive recursive class with the property that $P(x)\subseteq Q$ entails~$x\in Q$, then $Q$ contains all sets.
\end{proposition}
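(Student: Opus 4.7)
The plan is to reduce the proposition to a single application of set-theoretic foundation on the $R$-image of a suitably small witness set. Fix an arbitrary set $x_0$. I first build its $P$-closure
\[
T := \bigcup_{n\in\omega} P^{(n)}(x_0),\quad P^{(0)}(x_0)=\{x_0\},\ P^{(n+1)}(x_0)=\textstyle\bigcup_{y\in P^{(n)}(x_0)} P(y).
\]
The iterates $P^{(n)}(x_0)$ are defined by primitive recursion on $n\in\omega$ (which is available in $\prs$ since infinity introduces $\omega$ as a parameter), and the range of the resulting $\omega$-sequence collects into a set. By construction $x_0\in T$, and $T$ is closed under $P$.

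Next, set $U := T\setminus Q$, which is a set by $\Delta_0$-separation since $Q$ is primitive recursive. Assume for contradiction that $U\neq\emptyset$. The image $R[U]=\{R(x):x\in U\}$ is a set, since primitive recursive set functions send sets to sets, so foundation provides an $\in$-minimal element $r^*\in R[U]$. Pick $x^*\in U$ with $R(x^*)=r^*$. For any $z\in P(x^*)$ we have $z\in T$ by closure under $P$, and the hypothesis on $R$ yields $R(z)\in R(x^*)=r^*$; by $\in$-minimality of $r^*$ inside $R[U]$ we get $R(z)\notin R[U]$, so $z\notin U$, i.e., $z\in Q$. Hence $P(x^*)\subseteq Q$, and the progressiveness of $Q$ forces $x^*\in Q$, contradicting $x^*\in U$. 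Therefore $U=\emptyset$, so $T\subseteq Q$, and in particular $x_0\in Q$.

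The main obstacle is the formal construction of $T$ inside $\prs$: one needs the $\omega$-indexed iteration $n\mapsto P^{(n)}(x_0)$ to be admissible as an instance of the primitive recursion scheme, and its range to be a set. Once this is handled the rest is automatic. Note that the hypothesis $x\in P(y)\Rightarrow R(x)\in R(y)$ is used exactly once, to transport $\in$-minimality of $r^*$ in the set $R[U]$ back to predecessor-minimality in the $P$-order on $T$, which is precisely the leverage required to invoke progressiveness of $Q$.
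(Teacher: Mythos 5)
Your proof is correct and takes essentially the same route as the paper: the identical $\omega$-iterated $P$-closure of $\{x_0\}$ (the paper's $\mathsf{TC}^P(\{x\})$), followed by transferring the well-foundedness of $\in$ along $R$ to conclude membership in $Q$. The only difference is cosmetic: the paper runs the derived $\in$-induction for primitive recursive properties on the value $r=R(y)$ over the closure, whereas you inline the dual minimal-counterexample argument by applying foundation directly to the image $R[T\setminus Q]$; both versions rest on exactly the same ingredients available in $\prs$, namely foundation, $\Delta_0$-separation for primitive recursive classes, and the fact that primitive recursive functions map sets to sets.
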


Before we prove the proposition, we note that $P,R$ and~$Q$ are given as function symbols over which we quantify in the metalanguage. In Proposition~\ref{prop:pred-rec} below, $F$ is represented by a function symbol that is constructed from function symbols for~$P,R,H$ in the metatheory. The uniqueness part of the proposition asserts that the values of~$F$ are unique provably in~$\prs$ (while there are different function symbols that describe~$F$). Subsequent results should be interpreted in a similar~way.

\begin{proof}
For an arbitrary~$x$, we consider $\mathsf{TC}^P(x)=\bigcup_{n\in\omega}\mathsf{TC}_n^P(x)$ with
\begin{equation*}
\mathsf{TC}_0^P(x)=x\quad\text{and}\quad\mathsf{TC}_{n+1}^P(x)=\textstyle\bigcup\{P(y)\,|\,y\in\mathsf{TC}_n^P(x)\},
\end{equation*}
where the latter amounts to a primitive recursion. We can use $\in$-induction on~$r$ to establish the $\Delta_0$-statement
\begin{equation*}
\forall y\in\mathsf{TC}^P(\{x\})\,\big(R(y)=r\rightarrow y\in Q\big).
\end{equation*}
Indeed, if $y\in\mathsf{TC}^P(\{x\})$, then $z\in P(y)$ entails~$z\in\mathsf{TC}^P(\{x\})$ and~$R(z)\in R(y)$, so that we inductively get $P(y)\subseteq Q$ and hence~$y\in Q$. In view of~$x\in\mathsf{TC}^P(\{x\})$, we have thus established~$x\in Q$.
\end{proof}

The following provides a corresponding recursion principle. Let us note that the function $z\mapsto F\restriction z=\{\langle x,F(x)\rangle\,|\,x\in z\}$ is primitive recursive when the same holds for~$F$ (and in particular~$F\restriction z$ exists as a set).

\begin{proposition}[$\prs$; {\cite[Proposition~1.2.10]{freund-thesis}}]\label{prop:pred-rec}
For primitive recursive~$P,R,H$ such that $x\in P(y)$ entails~$R(x)\in R(y)$, there is a unique primitive recursive~$F$ that validates $F(x)=H(F\restriction P(x))$.
\end{proposition}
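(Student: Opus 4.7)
The approach is to translate the recursion along the $P$-predecessor relation into an ordinary $\in$-recursion on sets, using the rank map $R$ as the bridge. Concretely, the plan is to introduce an auxiliary primitive recursive function $\tilde F(s,x)$, defined by the $\in$-recursion scheme~(\ref{eq:prim-rec}) in the parameter $s$, whose intended value is the graph $\{(y, F(y)) : y \in \mathsf{TC}^P(\{x\}) \cup \{x\},\ R(y) \in s\}$. The auxiliary parameter $x$ is crucial, because the global collection $\{y : R(y) \in s\}$ may well be a proper class; restricting to $\mathsf{TC}^P(\{x\}) \cup \{x\}$, which is primitive recursive from the proof of Proposition~\ref{prop:pred-ind}, keeps $\tilde F(s,x)$ a genuine set. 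The desired $F(x)$ will then be extracted as the value at $y = x$ of $\tilde F(R(x) \cup \{R(x)\}, x)$.

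The defining clause would read
$$\tilde F(s,x) = A_s \cup \bigl\{\bigl(y, H(\{(z,v) \in A_s : z \in P(y)\})\bigr) : y \in \mathsf{TC}^P(\{x\}) \cup \{x\},\ R(y) \in s\bigr\},$$
where $A_s := \bigcup_{s' \in s} \tilde F(s',x)$ is the accumulated input supplied by~(\ref{eq:prim-rec}). The key observation that makes this work is that whenever $R(y) \in s$ and $z \in P(y)$, the hypothesis forces $R(z) \in R(y) \in s$, so the pair $(z, F(z))$ has already been placed into $\tilde F(R(y), x) \subseteq A_s$; hence the inner separation faithfully reconstructs $F \restriction P(y)$ and the outer application of $H$ produces the intended value $F(y)$. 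A $\Delta_0$-$\in$-induction on $s$, available in $\prs$, then confirms that $\tilde F(s,x)$ matches its intended meaning, and the recursion equation $F(x) = H(F \restriction P(x))$ drops out by unfolding a single step of the scheme. Uniqueness is obtained by applying Proposition~\ref{prop:pred-ind} to the primitive recursive class $\{x : F_1(x) = F_2(x)\}$ for any two candidate solutions~$F_1, F_2$.

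The main obstacle I anticipate is the purely bookkeeping task of rephrasing the defining clause strictly in the form~(\ref{eq:prim-rec}): one must check that each ingredient — the set $\mathsf{TC}^P(\{x\})$, the tests $R(y) \in s$ and $z \in P(y)$, and the formation of the new pairs indexed by $y$ — is built by composition and $\Delta_0$-separation from primitive recursive data, so that the whole construction genuinely yields a primitive recursive function symbol in the sense of Jensen and Karp. Nothing in this step is conceptually new, and it closely parallels the construction of $\mathsf{TC}^P$ in the proof of Proposition~\ref{prop:pred-ind}; once it has been carried out with care, the proposition follows.
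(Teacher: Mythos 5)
Your construction is correct and follows essentially the same route as the paper: both build, via the primitive recursion scheme over membership, the graph of $F$ restricted to $\mathsf{TC}^P(\{x\})$ and guarded by the rank~$R$, and both obtain uniqueness from Proposition~\ref{prop:pred-ind}. The only difference is cosmetic: the paper composes $R$ with the von Neumann rank so as to recurse on an ordinal~$\alpha$ with guard $R(y)<\alpha$, whereas you recurse directly on a set parameter~$s$ with guard $R(y)\in s$, which works just as well once one states the invariant carefully (your stages $\tilde F(s,x)$ may also pick up pairs whose rank lies in $\bigcup s$ rather than~$s$, so the induction should assert functionality and local correctness of the accumulated graph rather than the exact description you give -- part of the bookkeeping you already anticipate).
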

\begin{proof}
Uniqueness is readily established by the induction principle from the previous proposition. Existence is shown in~\cite{freund-thesis}, where it is required that $R$ maps into the ordinals. This can be arranged by composing with the primitive recursive rank function that is given by $r(x)=\sup\{r(y)+1\,|\,y\in x\}$. The idea is to use induction on the ordinal~$\alpha$ to construct the function
\begin{equation*}
(\alpha,x)\mapsto\big\{\langle y,F(y)\rangle\,\big|\,y\in\mathsf{TC}^P(x)\text{ and }R(y)<\alpha\},
\end{equation*}
where $\mathsf{TC}^P$ is defined as in the previous proof.
\end{proof}

Axiom beta is the statement that any well-founded relation~$r$ admits a function~$f$ that is defined on the field of~$r$ and validates~$f(y)=\{f(x)\,|\,(x,y)\in r\}$ (where $r$ and~$f$ are sets). This is not provable in~$\prs$, though the latter proves the special case where~$r$ is the restriction of~$\in$ to a set, which entails that any model~$(x,\in)$ of extensionality is isomorphic to one where~$x$ is transitive. The axiom of countability asserts that any set~$x$ admits an injection~$g:x\to\omega$. Over~$\prs$, this is equivalent to the statement that~$x$ is empty or admits a surjection~$h:\omega\to x$.

By $\atrs$, we mean the extension of~$\prs$ by axiom beta and the axiom of countability. This theory is conservative over a theory that Simpson~\cite{simpson82} has introduced under the same name, which is formulated in the usual signature~$\{\in,=\}$ without function symbols (see Section~1.4 of~\cite{freund-thesis} for details on conservativity). As shown by Simpson, $\atrs$ is conservative over the theory~$\mathsf{ATR}_0$ in the language of second-order arithmetic, which plays a central role in reverse mathematics. We note that~\cite{simpson82} describes countability as an `optional extra axiom' but includes it in the proof of conservativity. In the presence of countability, $\Pi^1_1$-comprehension is equivalent to the statement that any set is contained in an admissible set, i.\,e., in a transitive model of Kripke-Platek set theory (see, e.\,g., Section~1.4 of~\cite{freund-thesis}, which adopts closely related results from Section~7 of~\cite{jaeger-admissibles}). Modulo the introduction of function symbols, Kripke-Platek set theory is the extension of~$\prs$ by $\in$-induction for arbitrary formulas (which is automatic in transitive models) and the axiom of~$\Delta_0$-collection, which asserts that $\forall x\in a\exists y\,\theta$ entails $\exists b\forall x\in a\exists y\in b\,\theta$ when $\theta$ is a bounded formula.

In the presence of axiom beta, we write $\otp(r)$ for the unique ordinal that is isomorphic to a given well order~$r$. While $\prs$ does not prove that~$\otp$ is a total function on well orders, it can determine order types below a given bound. In particular, the class of well orders with order type below a given bound is primitive recursive, while the class of all well orders is not (as the latter is $\Pi^1_1$-complete). This will later be needed to define the functor~$\mathbb J$ (see the introduction) by a guarded form of recursion over dilators.

\begin{lemma}[$\prs$]\label{lem:otp-restr}
We have a primitive recursive map $(\xi,r)\mapsto\otp_\xi(r)$~with
\begin{equation*}
\otp_\xi(r)=\begin{cases}
\otp(r) & \parbox[t]{.55\textwidth}{if $\xi$ is an ordinal and $r$ is a well order that is isomorphic to an ordinal $\otp(r)<\xi$,}\\[3ex]
\xi & \text{otherwise}.
\end{cases}
\end{equation*}
\end{lemma}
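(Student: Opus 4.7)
My plan is to apply Proposition~\ref{prop:pred-rec} to the pair $(\xi,r)$, taking as predecessor function $P(\xi,r)=\{(\eta,r{\restriction}_x):\eta\in\xi,\ x\in\operatorname{field}(r)\}$ (with $r{\restriction}_x$ the $\Delta_0$-defined restriction of $r$ to its strict $r$-predecessors of $x$) and as rank $R(\xi,r)=\operatorname{rank}(\xi)$. The hypothesis of Proposition~\ref{prop:pred-rec} holds, since $\eta\in\xi$ yields $\operatorname{rank}(\eta)\in\operatorname{rank}(\xi)$. The recursion itself reads
\[
\otp_\xi(r)=\begin{cases}\xi&\text{if $\xi\notin\mathrm{Ord}$ or $r$ is not a linear order,}\\\min\!\bigl(\xi,\ \sup\{\otp_\eta(r{\restriction}_x)+1:\eta\in\xi,\ x\in\operatorname{field}(r)\}\bigr)&\text{otherwise.}\end{cases}
\]
Both guards are $\Delta_0$ and the supremum ranges over a set, so Proposition~\ref{prop:pred-rec} delivers a primitive recursive function $\otp_\xi(r)$.

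Next, I would verify the declared specification by $\in$-induction on $\xi$ via Proposition~\ref{prop:pred-ind}, simultaneously establishing (a) if $\xi\in\mathrm{Ord}$ and $r\cong\alpha<\xi$ is a well-order then $\otp_\xi(r)=\alpha$, and (b) otherwise $\otp_\xi(r)=\xi$. For~(a), each initial segment satisfies $r{\restriction}_x\cong\beta_x<\alpha$: IH(a) applied to $(\eta,r{\restriction}_x)$ with $\beta_x<\eta<\xi$ yields $\otp_\eta(r{\restriction}_x)=\beta_x$, while IH(b) applied to $\eta\le\beta_x$ yields $\otp_\eta(r{\restriction}_x)=\eta\le\beta_x$, so the per-$x$ sup is $\beta_x+1$ and the overall sup is $\alpha<\xi$, requiring no capping. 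In~(b) the first two disjunctive clauses are immediate from the definition. If $r$ is linear and some $r{\restriction}_x$ is not isomorphic to any ordinal below $\xi$ (e.g.\ ill-founded, or of order type $\ge\xi$), IH(b) gives $\otp_\eta(r{\restriction}_x)=\eta$ for every $\eta\in\xi$, driving the sup to $\xi$, which the cap returns.

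The main obstacle is the remaining subcase of~(b), where $r$ is a well-order with every $r{\restriction}_x\cong\beta_x<\xi$, yet $r$ itself is isomorphic to no ordinal — a configuration one cannot exclude in $\prs$, which lacks axiom beta. I would handle it by contradiction. Supposing $\alpha:=\sup_x(\beta_x+1)<\xi$, fix any $\eta$ with $\alpha<\eta<\xi$; by the IH the primitive recursive function $x\mapsto\otp_\eta(r{\restriction}_x)=\beta_x$ materialises as a set inside $\operatorname{field}(r)\times\eta$ via $\Delta_0$-separation. Well-foundedness of the proper initial segments of $r$ renders this map injective, while the inheritance of initial segments makes its image downward-closed, hence equal to an ordinal $\alpha'\le\alpha<\xi$ to which $r$ is isomorphic, contradicting the subcase hypothesis. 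Therefore $\sup_x(\beta_x+1)\ge\xi$, and the cap returns the required value $\xi$.
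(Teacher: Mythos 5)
Your construction step is fine (Proposition~\ref{prop:pred-rec} does apply to your $P$ and $R$, and the guards and the capped supremum are primitive recursive), but the verification step has a genuine gap. Proposition~\ref{prop:pred-ind} is only available for \emph{primitive recursive} classes $Q$ -- this is all the $\in$-induction that $\prs$ provides -- and your simultaneous induction statement (a)\,\&\,(b) is not of this form. The hypothesis ``$r$ is a well order isomorphic to some ordinal $\alpha<\xi$'' contains an existential quantifier over the isomorphism, which cannot be bounded in $\prs$ (there is no power set of $\operatorname{field}(r)\times\xi$ to quantify over), and clause (b) even uses its negation, including the sub-case ``$r$ is isomorphic to no ordinal''. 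That this relation \emph{is} primitive recursive is precisely what the lemma is needed for (cf.\ the sentence before the lemma in the paper), via a construction of the very kind you are trying to verify; so as written the appeal to guarded $\in$-induction either oversteps the base theory or is circular. A repair is possible but requires restructuring: either take the hypothesized isomorphism as a fixed parameter and induct only on $\Delta_0$ statements about it, or replace the semantic hypothesis by a primitive recursive witness (e.g.\ the statement that $x\mapsto\otp_\xi(r\restriction_x)$ is an order isomorphism onto an ordinal below $\xi$) and connect it to the intended meaning afterwards. A minor further slip: in the last sub-case you ``fix any $\eta$ with $\alpha<\eta<\xi$'', which need not exist when $\xi=\alpha+1$; taking $\eta=\alpha$ works there since every $\beta_x<\alpha$.

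For comparison, the paper sidesteps the induction problem by a different decomposition: it defines, by primitive recursion on ordinals, the enumeration $G$ of the field of $r$ in increasing order (via $G(\alpha)=\bigcup F(G"(\alpha))$, where $F(z)$ collects the $r$-minimal elements of $x\setminus z$) and sets $\otp_\xi(r)=\min\{\alpha\leq\xi\mid G"(\alpha)=x\text{ or }\alpha=\xi\}$. The verification then only uses inductions on primitive recursive statements: when an isomorphism $c:\otp(r)\to x$ exists it is taken as a parameter and one shows $c(\gamma)=G(\gamma)$ by induction on $\gamma$, and in the other case one argues directly that $G\restriction\alpha$ would itself be an isomorphism. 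Your recursion on the pair $(\xi,r)$ with nested calls on initial segments is an interesting alternative, but its correctness proof must be reorganised along these lines to stay within $\prs$.
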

\begin{proof}
We may focus on the case where $\xi$ is an ordinal and~$r$ is a linear order, since it can be singled out by a primitive recursive case distinction. Also, we may assume that the field~$x$ of~$r$ does not contain~$\emptyset$ as an element (otherwise replace it). We have a primitive recursive function~$F$ such that $F(z)$ is the set of all $r$-minimal elements of~$x\backslash z$ (since~$\Delta_0$-separation is primitive recursive). Then $\bigcup F(z)$ is the (necessarily unique) $r$-minimal element of~$x\backslash z$ if such an element exists and equal to~$\emptyset$ otherwise. By recursion over~$\alpha$, we get a primitive recursive~$G$ with
\begin{equation*}
G(\alpha)=\bigcup F(G"(\alpha))\quad\text{for}\quad G"(\alpha)=\{G(\beta)\,|\,\beta<\alpha\}.
\end{equation*}
We show that the equation from the proposition is satisfied if we adopt the primitive recursive definition
\begin{equation*}
\otp_\xi(r)=\min\big\{\alpha\leq\xi\,\big|\,G"(\alpha)=x\text{ or }\alpha=\xi\big\}.
\end{equation*}
First assume $r$ is isomorphic to an ordinal~$\otp(r)<\xi$. We write $c:\otp(r)\to x$ for the given isomorphism. An induction on~$\gamma<\otp(r)$ shows $c(\gamma)=G(\gamma)$. So~$\otp(r)$ is indeed the minimal~$\alpha<\xi$ with $G"(\alpha)=x$. Now assume $r$ is not isomorphic to any ordinal~$\alpha<\xi$. Towards a contradiction, we assume $\alpha<\xi$ validates $G"(\alpha)=x$. In view of~$\emptyset\notin x$, the value $G(\gamma)$ must be $r$-minimal in~$x\backslash G"(\gamma)$ for any~$\gamma<\alpha$. Thus $G\restriction\alpha$ is order preserving and hence an isomorphism between~$\alpha$ and $r$.
\end{proof}

\section{Preliminaries~II: Induction on Dilators}\label{sect:prelim-dil}

In this section, we first recall the definition and basic properties of dilators. We then discuss induction and recursion over dilators and show that `guarded' versions of these principles are available in primitive recursive set theory. Except for these guarded versions, all material in the section goes back to Girard~\cite{girard-pi2}, though the presentation is closer to~\cite{freund-computable}.

We write $\lo$ for the category of linear orders and embeddings. By $\nat$ we denote the full subcategory with objects~$n=\{0,\ldots,n-1\}$ for~$n\in\mathbb N$ (with $0<\ldots<n-1$). Morphisms of~$\lo$ are compared pointwise, i.\,e., for order embeddings~$f,g:X\to Y$ we write $f\leq g$ if we have $f(x)\leq_Y g(x)$ for all~$x\in X$. A functor~$D$ between $\nat$ and~$\lo$ is called monotone if $f\leq g$ entails~$D(f)\leq D(g)$.

The finite powerset functor on the category of sets is given by
\begin{align*}
[X]^{<\omega}&=\{a\subseteq X\,|\,a\text{ finite}\},\\
[f]^{<\omega}(a)&=\{f(x)\,|\,x\in a\}\in[Y]^{<\omega}\text{ for $f:X\to Y$ and $a\in[X]^{<\omega}$.}
\end{align*}
We omit the forgetful functor from orders to sets and often consider subsets of an order as suborders. For the following definition, this means that $\supp$ is a natural transformation between functors from~$\lo$ to sets. In general, we write $\eta:D\Rightarrow E$ to express that~$\eta$ is a natural transformation between functors~$D$ and~$E$, which consists of morphisms~$\eta_X:D(X)\to E(X)$ for all objects~$X$ in the relevant category. Let us also agree to write~$\rng(f)=\{f(x)\,|\,x\in X\}$ for the range of a function~$f$.

We would like to define predilators as certain endofunctors on~$\lo$. As the latter is class-sized, we officially work with their restrictions to~$\nat$. The extension to~$\lo$ will later be achieved by taking direct limits. With this extension in mind, we use~$[\cdot]^{<\omega}$ even in the context of finite sets, where it coincides with the full powerset.

\begin{definition}\label{def:predil}
A predilator consists of a monotone functor~$D:\nat\to\lo$ and a natural transformation~$\supp:D\Rightarrow[\cdot]^{<\omega}$ such that the `support condition'
\begin{equation*}
\rng(f)\subseteq\supp_n(\sigma)\quad\Rightarrow\quad\sigma\in\rng(D(f))
\end{equation*}
holds for any morphism~$f:m\to n$ of~$\nat$ and any $\sigma\in D(n)$.
\end{definition}

Note that the converse of the support condition is automatic, as naturality yields
\begin{equation*}
\supp_n(D(f)(\sigma_0))=[f]^{<\omega}(\supp_m(\sigma_0))\subseteq\rng(f).
\end{equation*}
We write $|a|$ for the cardinality of a finite set~$a$. When the latter is a subset of an order~$X$, the unique embedding~$|a|\to X$ with range~$a$ will be denoted by~$\en_a^X$. We observe that $\supp_n(\sigma)$ is determined as the smallest~$a\subseteq n$ with $\sigma\in\rng(D(\en_a^n))$. The existence of a smallest (not just minimal)~$a$ with this property is equivalent to the condition that~$D$ preserves pullbacks. When the latter is the case, the induced functions~$\supp_n:D(n)\to[n]^{<\omega}$ satisfy the support condition and are automatically natural (essentially by Girard's normal form theorem~\cite{girard-pi2}). In other words, the natural transformation~$\supp$ in the previous definition is not an extra piece of data but just an explicit witness that $D$ preserves pullbacks. The aforementioned extension to~$\lo$ will preserve direct limits by construction. This means that our definition of predilators is equivalent to the one of Girard.

Any element $\sigma\in D(n)$ has a `normal form' $\sigma=D(\en_a^n)(\sigma_0)$ with~$a=\supp_n(\sigma)$ and~$\sigma_0\in D(|a|)$, as mentioned above. By naturality, we have
\begin{equation*}
[\en_a^n]^{<\omega}(\supp_{|a|}(\sigma_0))=\supp_n(\sigma)=a,
\end{equation*}
which entails~$\supp_{|a|}(\sigma_0)=|a|$. This motivates the following notion.

\begin{definition}
The trace of a predilator~$D$ is defined as
\begin{equation*}
\tr(D)=\{(\sigma,n)\,|\,n\in\mathbb N\text{ and }\sigma\in D(n)\text{ with }\supp_n(\sigma)=n\}.
\end{equation*}
\end{definition}

Next, we want to extend a given predilator~$D$ into a functor~$\overline D:\lo\to\lo$. In view of the aforementioned normal forms, the idea is to define~$\overline D(X)$ as a set of formal expressions~$\overline D(\en_a^X)(\sigma)$ with $(|a|,\sigma)\in\tr(D)$. The order between two expressions with data~$(\sigma,a)$ and $(\tau,b)$ can be determined in~$D(|a\cup b|)$. To make this explicit, we first agree to write $\en_a$ at the place of $\en_a^a:|a|\to a$. Each embedding~$f:a\to b$ of finite orders determines a morphism~$|f|:|a|\to|b|$ of~$\nat$, which is characterized by
\begin{equation*}
\en_b\circ|f|=f\circ\en_a.
\end{equation*}
When~$a$ is a suborder of~$X$, we write $\iota_a:a\hookrightarrow X$ for the inclusion.

\begin{definition}\label{def:dil-extend}
Consider a predilator~$D$ with support functions~$\supp$. For each linear order~$X$, we declare
\begin{gather*}
\overline D(X)=\{(\sigma,a)\,|\,a\in[X]^{<\omega}\text{ and }(\sigma,|a|)\in\tr(D)\},\\
(\sigma,a)<(\tau,b)\text{ in }\overline D(X)\quad\Leftrightarrow\quad D(|\iota_a^{a\cup b}|)(\sigma)<D(|\iota_b^{a\cup b}|)(\tau)\text{ in }D(|a\cup b|).
\end{gather*}
Furthermore, we associate functions
\begin{alignat*}{3}
\overline D(f):\overline D(X)&\to\overline D(Y)\quad&&\text{with}\quad\overline D(f)(\sigma,a)=(\sigma,[f]^{<\omega}(a)),\\
\overline\supp_X:\overline D(X)&\to[X]^{<\omega}\quad&&\text{with}\quad\overline\supp_X(\sigma,a)=a,
\end{alignat*}
where $f:X\to Y$ is assumed to be an order embedding.
\end{definition}

One can verify that~$\overline D$ satisfies the conditions from Definition~\ref{def:predil} with~$\lo$ at the place of~$\nat$. Conversely, any functor~$E:\lo\to\lo$ that satisfies these conditions is naturally isomorphic to~$\overline D$ when~$D$ is the restriction of~$E$ to~$\nat$ (at least when the class-sized~$E$ is suitably definable). Here the isomorphism maps~$(\sigma,a)\in\overline D(X)$ to~$E(\en_a^X)(\sigma)$. In view of this close correspondence, we switch freely between~$D$ and~$\overline D$, e.\,g., by referring to~$(\sigma,a)$ as an element of~$D(X)$. This element will also be written as $(\sigma;a_0,\ldots,a_{n-1})$ or as $(\sigma;a_0,\ldots,a_{n-1};X)$ for $a=\{a_0,\ldots,a_{n-1}\}$, where we always assume $a_0<_X\ldots<_X a_{n-1}$. The latter coincides with Girard's notation.

\begin{definition}\label{def:dil}
A predilator~$D$ is called a dilator when~$\overline D(X)$ is well-founded for every well order~$X$.
\end{definition}

Let us note that everything so far remains meaningful if we take Definition~\ref{def:predil} without the condition that $D$ must be monotone. The latter is in fact automatic when $D$ is a dilator (see Proposition~2.3.10 of~\cite{girard-pi2}). It is sometimes important to have the condition for predilators as well.

The material that we have seen so far can also be found in Section~2 of~\cite{freund-computable}, where it is developed in primitive recursive set theory~($\prs$). In the following, we work towards the principles of induction and recursion over dilators. These are due to Girard~\cite{girard-pi2} but have apparently not been discussed in the context of~$\prs$ yet.

\begin{definition}\label{def:connected}
Consider a predilator~$D$. To define a binary relation~$\ll$ on~$\tr(D)$, we stipulate that $(\sigma_0,n_0)\ll(\tau_1,n_1)$ holds if we have $D(f_0)(\sigma_0)<D(f_1)(\sigma_1)$ for all embeddings $f_i:n_i\to n_0+n_1$. We declare that $\sigma\equiv\tau$ holds if we have neither $\sigma\ll\tau$ nor~$\tau\ll\sigma$. A predilator~$D$ is called connected if we have $D\neq 0$ and $\sigma\equiv\tau$ holds for all~$\sigma,\tau\in\tr(D)$.
\end{definition}

Note that we write~$0$ for both the empty order and the dilator~$D$ with $D(X)=0$ for every order~$X$. Similarly, we will write~$1$ for the dilator~$D$ with~$D(X)=1$. In the definition of~$\ll$, we can equivalently replace~$n_0+n_1$ by any larger~$N$ (factor by an embedding~$\iota:n_0+n_1\to N$ with $\rng(f_0)\cup\rng(f_1)\subseteq\rng(\iota)$). It follows that~$\ll$ is transitive and indeed a partial order.

\begin{lemma}[$\prs$]
If we have $\sigma_0\ll\sigma_1\equiv\sigma_2$ or $\sigma_0\equiv\sigma_1\ll\sigma_2$, we get $\sigma_0\ll\sigma_2$. Furthermore, $\equiv$ is an equivalence relation.
\end{lemma}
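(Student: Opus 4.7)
The plan is to reduce the first two implications to a single template: assume the conclusion $\sigma_0 \ll \sigma_2$ fails, witness this by a pair of embeddings, extract from the hypothesis $\sigma_1 \equiv \sigma_2$ (respectively $\sigma_0 \equiv \sigma_1$) a second pair of embeddings witnessing an ``opposite'' inequality, merge both pairs into one common ambient order via a disjoint sum, use monotonicity of the predilator to align the two copies of $\sigma_2$ (respectively $\sigma_0$) through a pointwise-comparable shift, and finally contradict the hypothesis $\sigma_0 \ll \sigma_1$ (respectively $\sigma_1 \ll \sigma_2$).

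Concretely, for the first implication, I would unpack $\sigma_1 \equiv \sigma_2$ via $\neg(\sigma_2 \ll \sigma_1)$ as embeddings $k_1 \colon n_1 \to J$ and $k_2 \colon n_2 \to J$ with $D(k_1)(\sigma_1) \leq D(k_2)(\sigma_2)$, and unpack the negated conclusion as embeddings $h_0 \colon n_0 \to K$, $h_2 \colon n_2 \to K$ with $D(h_2)(\sigma_2) \leq D(h_0)(\sigma_0)$. I would then pass to $L = J + K$ (with $J$ placed below $K$) and lift the four embeddings to $L$ via the obvious inclusions, writing $\tilde k_i$ and $\tilde h_j$ for the lifts. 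Since the range of $\tilde k_2$ sits below that of $\tilde h_2$, we have the pointwise inequality $\tilde k_2 \leq \tilde h_2$, and monotonicity of $D$ therefore gives $D(\tilde k_2)(\sigma_2) \leq D(\tilde h_2)(\sigma_2)$. Chaining the resulting inequalities produces
$D(\tilde k_1)(\sigma_1) \leq D(\tilde k_2)(\sigma_2) \leq D(\tilde h_2)(\sigma_2) \leq D(\tilde h_0)(\sigma_0)$, which contradicts the universal strict inequality $D(\tilde h_0)(\sigma_0) < D(\tilde k_1)(\sigma_1)$ guaranteed by $\sigma_0 \ll \sigma_1$ at the ambient order $L$.

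The second implication is symmetric: one swaps the direction of the disjoint sum (taking $L = K + J$ with $K$ below $J$) so that it is now the two copies of $\sigma_0$ that become pointwise-comparable, and extracts from $\sigma_0 \equiv \sigma_1$ the embeddings $k_0, k_1$ witnessing $\neg(\sigma_1 \ll \sigma_0)$. The analogous chain $D(\tilde h_2)(\sigma_2) \leq D(\tilde h_0)(\sigma_0) \leq D(\tilde k_0)(\sigma_0) \leq D(\tilde k_1)(\sigma_1)$ then contradicts $\sigma_1 \ll \sigma_2$.

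Finally, for the statement that $\equiv$ is an equivalence relation, reflexivity follows by taking $f_0 = f_1$ in the definition of $\ll$, and symmetry is built into the definition. For transitivity, suppose $\sigma_0 \equiv \sigma_1 \equiv \sigma_2$: if $\sigma_0 \ll \sigma_2$ held, the second implication applied to $\sigma_1 \equiv \sigma_0 \ll \sigma_2$ would yield $\sigma_1 \ll \sigma_2$, contradicting $\sigma_1 \equiv \sigma_2$, and the case $\sigma_2 \ll \sigma_0$ is handled symmetrically via the first implication. The only real obstacle in the whole argument is spotting the disjoint-sum trick: without placing the two copies of the ``shared'' trace element in a pointwise-comparable position, monotonicity of $D$ cannot be brought to bear to bridge the two chains of witnesses, and the direct combination of the existential data coming out of $\equiv$ does not otherwise interact with the universal quantification hidden in $\ll$.
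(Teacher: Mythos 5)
Your proposal is correct and follows essentially the same route as the paper: both arguments merge the witnesses extracted from the failure of $\ll$ (coming from $\equiv$) with the other pair of embeddings into a common finite order, arranged so that the two copies of the shared trace element are pointwise comparable, and then invoke monotonicity of the predilator to chain the inequalities against the hypothesis $\sigma_0\ll\sigma_1$ (resp.\ $\sigma_1\ll\sigma_2$); the only cosmetic difference is that you argue by contradiction from witnesses of $\lnot(\sigma_0\ll\sigma_2)$, whereas the paper argues directly for arbitrary embeddings $f_0,f_2$. The treatment of reflexivity, symmetry and the reduction of transitivity of $\equiv$ to the two mixed implications also matches the paper.
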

\begin{proof}
It is straightforward to see that $\equiv$ is reflexive and symmetric. Transitivity follows once the first claim of the lemma is established. Indeed, given $\sigma_0\equiv\sigma_1\equiv\sigma_2$, we cannot have $\sigma_0\ll\sigma_2$, since the latter and $\sigma_2\equiv\sigma_1$ would yield~$\sigma_0\ll\sigma_1$. Similarly, we must have $\sigma_2\not\ll\sigma_0$, as needed to get~$\sigma_0\equiv\sigma_2$.

Let us now show that $\sigma_0\ll\sigma_2$ follows from $\sigma_0\ll\sigma_1\equiv\sigma_2$ (the argument for $\sigma_0\equiv\sigma_1\ll\sigma_2$ being similar). Write~$\sigma_i=(\tau_i,n_i)$ and consider arbitrary embeddings $f_0:n_0\to N$ and $f_2:n_2\to N$ for some large~$N$. Given $\sigma_2\not\ll\sigma_1$, we find embeddings~$f_2':n_2\to N$ and $f_1:n_1\to N$ with $D(f_2')(\tau_2)\geq D(f_1)(\tau_1)$. Increasing~$N$ if necessary, we may assume that $f_1$ and~$f_2'$ have range entirely below~$f_0$ and $f_2$, so that we have~$f_2'\leq f_2$. We then get $D(f_2')(\tau_2)\leq D(f_2)(\tau_2)$ since predilators are monotone. Due to $\sigma_0\ll\sigma_1$, we also have $D(f_0)(\tau_0)<D(f_1)(\tau_1)$. Together, the previous inequalities yield $D(f_0)(\tau_0)<D(f_2)(\tau_2)$, as needed for~$\sigma_0\ll\sigma_2$.
\end{proof}

For predilators~$D_\alpha$ indexed by a linear order~$L$, we get a predilator~$\sum_{\alpha\in L}D_\alpha$~with
\begin{gather*}
\textstyle\sum_{\alpha\in L}D_\alpha(X)=\{(\alpha,\sigma)\,|\,\alpha\in L\text{ and }\sigma\in D_\alpha(X)\},\\
(\alpha,\sigma)<(\beta,\tau)\quad\Leftrightarrow\quad\alpha<_L\beta\text{ or }(\alpha=\beta\text{ and }\sigma<\tau\text{ in }D_\alpha(X)).
\end{gather*}
Let us note that only orders~$X$ of the form~$\{0,\ldots,n-1\}$ are considered for the official construction with predilators on~$\nat$. For an order embedding~$f:X\to Y$ and an element~$(\alpha,\sigma)$ of $\sum_{\alpha\in L}D_\alpha(X)$, we define
\begin{equation*}
\textstyle\sum_{\alpha\in L}D_\alpha(f)(\alpha,\sigma)=(\alpha,D_\alpha(f)(\sigma)).
\end{equation*}
Writing $\supp^\alpha_X:D_\alpha(X)\to[X]^{<\omega}$ for the support functions associated with~$D_\alpha$, we put $\supp_X(\alpha,\sigma)=\supp^\alpha_X(\sigma)$ for~$(\alpha,\sigma)\in\sum_{\alpha\in L}D_\alpha(X)$. This clearly validates the conditions from Definition~\ref{def:predil}. The element~$(\alpha,\sigma)$ of~$\sum_{\alpha\in L}D_\alpha(X)$ may also be written as~$\sum_{\gamma<\alpha}D_\alpha(X)+\sigma$. If we have $D_\alpha\neq 0$ for all~$\alpha\in L$, then $\sum_{\alpha\in L}D_\alpha$ is a dilator precisely when~$L$ is a well order and every~$D_\alpha$ is a dilator.

For predilators~$D$ and $E$, we write~$D\leq E$ if a natural transformation~$\eta:D\Rightarrow E$ exists. Any such~$\eta$ respects the associated supports, i.\,e., we have~$\supp^D={\supp^E}\circ\eta$ (see Proposition~2.3.15 of~\cite{girard-pi2} or Lemma~2.19 of~\cite{freund-rathjen_derivatives}). This means that the naturality squares for~$\eta$ are pullbacks, i.\,e., that $\eta$ is Cartesian. Crucially, it follows that
\begin{equation*}
\tr(\eta):\tr(D)\to\tr(E)\quad\text{with}\quad\tr(\eta)(n,\sigma)=(n,\eta_n(\sigma))
\end{equation*}
is well-defined. In terms of the notation that was discussed before Definition~\ref{def:dil}, the embedding~$\eta_X:D(X)\to E(X)$ can be written as
\begin{equation*}
D(X)\ni(\sigma,a)\mapsto(\eta_n(\sigma),a)\in E(X).
\end{equation*}
Using that~$\eta$ is natural, one obtains
\begin{equation*}
\sigma\ll\tau\text{ in }\tr(D)\quad\Leftrightarrow\quad\tr(\eta)(\sigma)\ll\tr(\eta)(\tau)\text{ in }\tr(E).
\end{equation*}
Any $A\subseteq\tr(D)$ gives rise to a predilator~$D[A]$ with
\begin{align*}
D[A](X)&=\big\{\sigma\in D(X)\,\big|\,(\sigma_0,|a|)\in A\text{ for }\sigma=D(\en_a^X)(\sigma_0)\text{ with }a=\supp_X(\sigma)\big\}\\
{}&=\{(\sigma;a_0,\ldots,a_{n-1};X)\,|\,(\sigma,n)\in A\}.
\end{align*}
The inclusion maps from~$D[A](X)$ into~$D(X)$ form a natural transformation. Conversely, any natural transformation~$\eta:D\Rightarrow E$ factors through~$D\cong E[A]$ for a unique~$A\subseteq\tr(E)$ (see Theorem~4.2.5 of~\cite{girard-pi2} or more explicitly Section~2 of~\cite{freund-pilot}).

\begin{proposition}[$\prs$]\label{prop:sums}
(a) There is a primitive recursive set function that computes, for a given predilator~$D$, a linear order~$L$ and connected predilators~$D_\alpha$ such that we have $D\cong\sum_{\alpha\in L}D_\alpha$.

(b) Given a natural transformation~$\eta:\sum_{\alpha\in K}D_\alpha\Rightarrow\sum_{\beta\in L}E_\beta$ for connected~$D_\alpha$ and~$E_\beta$, we get an order embedding~$f:K\to L$ such that we have $D_\alpha\leq E_{f(\alpha)}$ for all~$\alpha\in K$. If $\eta$ is an isomorphism, then so is~$f$ and we have~$D_\alpha\cong E_{f(\alpha)}$.
\end{proposition}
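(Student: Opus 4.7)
My plan is to use the relation $\equiv$ from the preceding lemma to obtain the unique connected decomposition; the order on $L$ will be induced by $\ll$.

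For~(a), I would take $L$ to be the quotient of $\tr(D)$ by $\equiv$, linearly ordered by declaring $[\sigma]<_L[\tau]$ iff $\sigma\ll\tau$ (well-definedness and trichotomy follow from the preceding lemma, since $\equiv$ is precisely the negation of the $\ll$-trichotomy). For each class $A\in L$, I set $D_A:=D[A]$. Since $\tr(D_A)=A$ and any two elements of $A$ are $\equiv$-related in $\tr(D)$ (equivalently in $\tr(D_A)$, because the finite-order structure coincides), $D_A$ is connected. The candidate isomorphism $\Phi_X\colon D(X)\to\sum_{A\in L}D_A(X)$ sends an element of $D(X)$ with normal form $(\sigma_0;a;X)$ to the pair $(A,(\sigma_0;a;X))$, where $A$ is the $\equiv$-class of $(\sigma_0,|a|)\in\tr(D)$. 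Naturality of $\Phi$ is clear since the class assignment depends only on $(\sigma_0,|a|)$, which is invariant under $\overline D(f)$. Order-preservation reduces, for elements lying in different classes, to an instance of $\ll$ applied to the canonical embeddings $\en_a^{a\cup b}$ and $\en_b^{a\cup b}$; for elements in the same class, it holds by definition of $D_A$. Since $\tr(D)$, $\ll$, $\equiv$ and the slices $D[A]$ are all $\Delta_0$ in $D$, the entire construction is primitive recursive.

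For~(b), I would first observe that the $\equiv$-classes of $\tr(\sum_{\alpha\in K}D_\alpha)$ are in order-preserving bijection with $K$: whenever $\alpha\neq\alpha'$, any $\ll$-comparison in $\sum D_\gamma$ of $(n,(\alpha,\sigma))$ and $(n',(\alpha',\tau))$ reduces to the $K$-comparison of $\alpha$ and $\alpha'$, while for $\alpha=\alpha'$ connectedness of $D_\alpha$ yields $\sigma\equiv\tau$; analogously for $L$ and the $E_\beta$. Since $\tr(\eta)$ preserves $\ll$ and $\equiv$, it descends to an order embedding $f\colon K\to L$. Naturality of $\eta$ then forces $\eta$ to restrict to summands: for $\sigma\in D_\alpha(X)$ with normal form $\sigma=D_\alpha(\en_a^X)(\sigma_0)$, naturality rewrites $\eta_X(\alpha,\sigma)$ as the image of $\eta_{|a|}(\alpha,\sigma_0)$ under the functorial action of $\sum_\beta E_\beta$ on $\en_a^X$, and its first coordinate therefore equals $f(\alpha)$ by the preceding observation. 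Hence $\eta$ restricts to a natural transformation $D_\alpha\Rightarrow E_{f(\alpha)}$, witnessing $D_\alpha\leq E_{f(\alpha)}$. If $\eta$ is an isomorphism, applying the same construction to $\eta^{-1}$ yields an order embedding $g\colon L\to K$ with $g\circ f=\mathrm{id}_K$ and $f\circ g=\mathrm{id}_L$, so $f$ is an order isomorphism; the restricted transformations obtained from $\eta$ and $\eta^{-1}$ are then mutually inverse, giving $D_\alpha\cong E_{f(\alpha)}$.

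The main obstacle I anticipate is bookkeeping rather than mathematical. In~(a) one must verify that the quotient $\tr(D)/{\equiv}$ and the family $A\mapsto D[A]$ are uniformly primitive recursive in $D$ (and not merely $\Delta_0$-definable), and keep careful track of the identifications between subsets $a\cup b\subseteq X$ and their canonical copies $|a\cup b|\in\nat$ when translating from $\ll$ (defined via embeddings into $n_0+n_1$) to the comparison inside $\overline D(X)$. Both are routine given the framework recalled earlier in Section~\ref{sect:prelim-dil}.
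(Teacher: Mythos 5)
Your proposal is correct and follows essentially the same route as the paper: take $L$ to be the quotient of $(\tr(D),\ll)$ by $\equiv$, let $D_A=D[A]$, and check that the evident map (you write its inverse direction) is a natural order isomorphism, while in (b) you identify $K$ with the quotient of the trace, use that $\tr(\eta)$ preserves $\ll$ and $\equiv$ to induce $f$, and restrict $\eta$ to summands, handling the isomorphism case via $\eta^{-1}$. The bookkeeping points you flag (primitive recursiveness of the quotient and the passage from $\ll$ on $n_0+n_1$ to comparisons in $\overline D(X)$) are treated just as briefly in the paper's own proof.
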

\begin{proof}
(a) In view of the previous lemma, we may define~$L$ as the quotient of the partial order $(\tr(D),\ll)$ by the compatible equivalence relation~$\equiv$. Using primitive recursion, one can compute~$L$ as the image of the quotient map~$\tr(D)\ni a\to[a]$, which can itself be computed from~$D$. For~$\alpha\in L$ we set $D_\alpha=D[\alpha]$, which is explained in the paragraph before the proposition, as we have~$\alpha\subseteq\tr(D)$. Let us note that the~$D_\alpha$ are connected by construction. In view of $D[\alpha](X)\subseteq D(X)$, we declare that the desired isomorphism maps~$(\gamma,\sigma)\in\sum_{\alpha\in L}D_\alpha(X)$ to~$\sigma\in D(X)$. This map is surjective because of~$\tr(D)=\bigcup L$. To see that it is an embedding, consider~$\sigma_i\in D[\gamma_i](X)$ with~$\gamma_0<_L\gamma_1$. Let $\sigma_i=D(\en_{a(i)}^X)(\tau_i)$ with $a(i)=\supp_X(\sigma_i)$ and~$(|a(i)|,\tau_i)\in\gamma_i$. We have $(|a(0)|,\tau_0)\ll(|a(1)|,\tau_1)$, so that the definition of~$\ll$ immediately yields~$\sigma_0<_{D(X)}\sigma_1$.

(b) One checks that $K$ is isomorphic to the quotient of~$(\tr(\sum_{\alpha\in K}D_\alpha),\ll)$ by~$\equiv$. Thus the aforementioned map~$\tr(\eta):\tr(\sum_{\alpha\in K}D_\alpha)\to\tr(\sum_{\beta\in L}E_\beta)$ induces the desired embedding~$f:K\to L$. We may write
\begin{equation*}
\eta_X(\alpha,\sigma)=(f(\alpha),\eta^\alpha_X(\sigma))
\end{equation*}
for embeddings~$\eta^\alpha_X:D_\alpha(X)\to E_{f(\alpha)}(X)$, which are natural and witness that we have $D_\alpha\leq E_{f(\alpha)}$. When~$\eta$ is an isomorphism, its inverse gives rise to inverses of~$f$ and the~$\eta^\alpha_X$, so that these are isomorphisms.
\end{proof}

Following Girard~\cite{girard-pi2}, we now classify predilators into four types. Note that a (not necessarily well-founded) linear order~$L$ is called a successor if it has a biggest element~$\gamma$, in which case we write~$L=\gamma+1$. A non-empty order that is not a successor is called a limit.

\begin{definition}\label{def:dil-types}
A predilator~$D\cong\sum_{\alpha\in L}D_\alpha$ with connected~$D_\alpha$ has type~$0$ if~$L=0$ (and hence~$D=0$), type~$1$ if~$L=\gamma+1$ is a successor and~$D_\gamma=1$, type~$\omega$ if~$L$ is a limit and type~$\Omega$ if $L=\gamma+1$ is a successor and~$D_\gamma\neq 1$.
\end{definition}

In the sequel, the assumption that the~$D_\alpha$ in~$D\cong\sum_{\alpha\in L}D_\alpha$ are connected will often be left implicit. When we have~$L=\gamma+1$, we often write~\mbox{$D=E_0+E_1$} with the implicit assumption that~$E_1$ is connected, i.\,e., that we have~$E_0=\sum_{\alpha<\gamma}D_\alpha$ and~$E_1=D_\gamma$. To formulate principles of induction and recursion, we need to identify the predecessors of a dilator. This is most challenging in the case of type~$\Omega$. Let us first note that we must have~$n>0$ for any~$(n,\sigma)\in\tr(D)$ when~$D\neq 1$ is connected. Indeed, it is straightforward to show that a trace element of the form~$(0,\sigma)$ is $\ll$-comparable to any other. The following is well-defined by a result of Girard~\cite{girard-pi2} (see Proposition~2.7 of~\cite{freund-zoo} for a statement over a weak base theory).

\begin{definition}\label{def:import-coeff}
Consider a connected predilator~$D\neq 1$. For $(n,\sigma)\in\tr(D)$, we define~$i(\sigma)$ as the unique~$i<n$ such that
\begin{equation*}
f(i)<g(i)\quad\Rightarrow\quad D(f)(\sigma)<D(g)(\sigma)
\end{equation*}
holds for all embeddings~$f,g:n\to 2n$.
\end{definition}

As in the case of~$\ll$, the definition of~$i(\sigma)$ does not change when we replace~$2n$ by a larger number. Using the notation from the paragraph before Definition~\ref{def:dil}, we get the following, where $(\sigma,n)$ and~$(\tau,m)$ do not need to be the same trace element (see Section~3.2 of~\cite{girard-pi2} or Theorem~2.11 of~\cite{freund-zoo}).

\begin{lemma}[$\prs$]\label{lem:import-coeff}
For a connected predilator~$D$ and any order~$X$, we have
\begin{equation*}
a_{i(\sigma)}<_X b_{i(\tau)}\quad\Rightarrow\quad(\sigma;a_0,\ldots,a_m;X)<_{D(X)}(\tau;b_0,\ldots,b_n;X).
\end{equation*}
\end{lemma}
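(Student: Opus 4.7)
The plan is to reduce the inequality to a comparison inside some $D(k)$ and then combine the defining property of the important coefficient $i(\cdot)$ with the connectedness hypothesis.

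Set $c=\{a_0,\ldots,a_{m-1}\}\cup\{b_0,\ldots,b_{n-1}\}\subseteq X$ with $k=|c|$, and let $f=|\iota_a^c|\colon m\to k$ and $g=|\iota_b^c|\colon n\to k$ be the induced finite embeddings. By Definition~\ref{def:dil-extend}, showing $(\sigma;a_0,\ldots,a_{m-1};X)<_{\overline D(X)}(\tau;b_0,\ldots,b_{n-1};X)$ is equivalent to $D(f)(\sigma)<_{D(k)}D(g)(\tau)$, and the characterisation $\en_c\circ f=\iota_a^c\circ\en_a$ (and its analogue for $g$) turns the hypothesis $a_{i(\sigma)}<_X b_{i(\tau)}$ into $f(i(\sigma))<g(i(\tau))$ in $k$.

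Next, following the remark after Definition~\ref{def:import-coeff}, I would verify that the defining property of $i(\sigma)$ survives enlargement of the codomain: for any $N\geq m$ and embeddings $h_0,h_1\colon m\to N$ with $h_0(i(\sigma))<h_1(i(\sigma))$, one has $D(h_0)(\sigma)<_{D(N)}D(h_1)(\sigma)$. The argument is a routine factorisation through an order embedding $\iota\colon 2m\to N$ whose range contains $\rng(h_0)\cup\rng(h_1)$, together with the fact that $D(\iota)$ is an order embedding. The analogous statement holds for $i(\tau)$.

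For the main step, I would embed $k$ into a larger $k'$ via $e\colon k\to k'$ leaving ample room around the critical positions $ef(i(\sigma))<eg(i(\tau))$; by naturality one may replace $f,g$ with $ef,eg$. I then construct auxiliary embeddings $h\colon m\to k'$ and $h'\colon n\to k'$ which agree with $ef$ and $eg$ respectively outside the critical indices but satisfy $h(i(\sigma))=eg(i(\tau))$ and $h'(i(\tau))=ef(i(\sigma))$. The extended property of the previous paragraph then yields $D(ef)(\sigma)<D(h)(\sigma)$ and $D(h')(\tau)<D(eg)(\tau)$, so the goal reduces to the single intermediate inequality $D(h)(\sigma)\leq D(eg)(\tau)$ (or equivalently $D(ef)(\sigma)\leq D(h')(\tau)$).

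This intermediate inequality is the main obstacle, and it is where connectedness enters essentially. Since $\sigma\equiv\tau$, we have neither $\sigma\ll\tau$ nor $\tau\ll\sigma$, and I would combine the negation of $\ll$ in both directions with the freedom provided by the gap around the critical positions to force the comparison in the required direction. The delicate point is to carry this out uniformly. I expect the cleanest route is to choose $h$ and $h'$ as restrictions of a common embedding $m+n\to k'$, so that the comparison in $D(k')$ is governed directly by the failure of both $\ll$-relations between $\sigma$ and $\tau$ together with one more appeal to the extended $i$-property; this step is the one that will likely require the most careful book-keeping.
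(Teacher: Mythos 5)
Your first two steps are fine and are exactly how one starts: reducing to $D(f)(\sigma)<D(g)(\tau)$ in $D(k)$ with $f(i(\sigma))<g(i(\tau))$ via Definition~\ref{def:dil-extend}, and extending the defining property of $i(\sigma)$ from codomain $2n$ to arbitrary codomains by factoring through an enumeration of $\rng(h_0)\cup\rng(h_1)$. (The paper itself does not prove this lemma but cites Girard and Theorem~2.11 of the ``zoo'' paper, so your attempt has to stand on its own.) The main step, however, reduces the goal to a false statement. After equalizing the critical coefficients, the comparison is decided by lower-priority data and can go either way, so $D(h)(\sigma)\leq D(eg)(\tau)$ with $h(i(\sigma))=eg(i(\tau))$ need not hold. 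Concretely, let $D(X)=X\times X$ with the lexicographic order, $D(f)(x,y)=(f(x),f(y))$ and $\supp_X(x,y)=\{x,y\}$; this is a connected dilator. Take $\sigma=(0,1)\in D(2)$ with $i(\sigma)=0$ and $\tau=(1,0)\in D(2)$ with $i(\tau)=1$, and $f=(0,3)$, $g=(1,2)$ as embeddings $2\to 4$. Then $f(i(\sigma))=0<2=g(i(\tau))$ and indeed $D(f)(\sigma)=(0,3)<(2,1)=D(g)(\tau)$, but your $h=(2,3)$ gives $D(h)(\sigma)=(2,3)>(2,1)=D(g)(\tau)$; the symmetric variant with $h'$ fails likewise (and with these values $h'$ is not even order preserving, which is a second defect of the construction: $eg(i(\tau))$ may exceed $ef(i(\sigma)+1)$, so ``ample room'' does not make $h$, $h'$ embeddings). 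Your final paragraph, which is where connectedness would have to do the work, only gestures at combining the two $\ll$-failures and does not supply an argument, so as written the proof does not go through.

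The missing idea is to keep the two critical values strictly separated and to place a witness of $\neg(\tau\ll\sigma)$ \emph{inside the gap} between them, rather than to equalize the critical positions. Connectedness gives embeddings $p:m+1\to M$ and $q:n+1\to M$ into a common finite order with $D(p)(\sigma)\leq D(q)(\tau)$. Let $N$ be $k$ with a copy of $M$ inserted immediately above $f(i(\sigma))$, hence strictly below $g(i(\tau))$, and let $e:k\to N$, $j:M\to N$ be the resulting embeddings. Then $ef(i(\sigma))<jp(i(\sigma))$ and $jq(i(\tau))<eg(i(\tau))$, so your extended $i$-property yields the outer inequalities in
\begin{equation*}
D(ef)(\sigma)<D(jp)(\sigma)\leq D(jq)(\tau)<D(eg)(\tau),
\end{equation*}
while the middle one follows by applying the order embedding $D(j)$ to $D(p)(\sigma)\leq D(q)(\tau)$. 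Since $D(e)$ is an order embedding, this gives $D(f)(\sigma)<D(g)(\tau)$ and hence the lemma. Note that only one of the two $\ll$-failures is needed, and the critical coefficient of the $\sigma$-side must stay strictly below that of the $\tau$-side throughout the chain; this is precisely what your equalization step destroys.
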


We write~$X+Y$ for the sum of linear orders and refer to its elements by~$x$ and~$X+y$ for $x\in X$ and~$y\in Y$ (note that we always have~$x<X+y$). Let $x<X$ express that~$x$ lies in the left summand. For an embedding~$f:Y\to Y'$, we define $X+f:X+Y\to X+Y'$ by $(X+f)(x)=x$ and~$(X+f)(X+y)=X+f(y)$.

\begin{definition}\label{def:sep-vars}
Given a connected predilator~$D\neq 1$ and an ordinal~$\gamma$, we declare
\begin{equation*}
\{D\}^\gamma(X)=\big\{(\sigma;a_0,\ldots,a_n)\in D(\gamma+X)\,|\,i(\sigma)=\max\{i\leq n\,|\,a_i<\gamma\}\big\}.
\end{equation*}
For an embedding~$f:X\to Y$, we define $\{D\}^\gamma(f):\{D\}^\gamma(X)\to\{D\}^\gamma(Y)$ as the restriction of~$D(\gamma+f)$ to the indicated (co-)domain. To turn~$\{D\}^\gamma$ into a predilator, we declare that $(\sigma;\gamma_0,\ldots,\gamma_{i(\sigma)},\gamma+x_0,\ldots,\gamma+x_{m-1})$ has support~$\{x_0,\ldots,x_{m-1}\}$. When~$D+E$ has type~$\Omega$ (with connected~$E\neq 1$), we set~$\{D+E\}^\gamma=D+\{E\}^\gamma$.
\end{definition}

Let us note that the given construction coincides with the separation of variables due to Girard, who writes~$\mathsf{SEP}(D)(X,\gamma)$ at the place of~$\{D\}^\gamma(X)$. The latter~notation is used by P\"appinghaus~\cite{paeppinghaus-bachmann}.

Our next result (which is implicit in the proof of Theorem~3.5.1 from~\cite{girard-pi2}) shows how~$\{D\}^\gamma$ can be seen as a predecessor of~$D$.

\begin{lemma}[$\prs$]\label{lem:sep-pred}
Consider ordinals~$\gamma$ and $\delta>0$ with $\gamma<\omega^\delta$. For any predilator~$D$ of type~$\Omega$, we can embed~$\{D\}^\gamma(\omega^\delta)$ into a proper initial segment of~$D(\omega^\delta)$. 
\end{lemma}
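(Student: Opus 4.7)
The plan is to reduce the lemma to a concrete construction in the connected component of $D$. First, invoke Proposition~\ref{prop:sums}(a) (or the convention fixed just before Definition~\ref{def:dil-types}) to write $D=E_0+E_1$ with $E_1$ connected and $E_1\neq 1$, which is possible because $D$ has type~$\Omega$. By Definition~\ref{def:sep-vars}, $\{D\}^\gamma=E_0+\{E_1\}^\gamma$, and $\{E_1\}^\gamma(\omega^\delta)$ is by definition a subset of $E_1(\gamma+\omega^\delta)$. Since $\delta>0$ makes $\omega^\delta$ additively indecomposable and $\gamma<\omega^\delta$, the canonical order isomorphism $h:\gamma+\omega^\delta\to\omega^\delta$ is available, and the composition
\begin{equation*}
\{D\}^\gamma(\omega^\delta)=E_0(\omega^\delta)+\{E_1\}^\gamma(\omega^\delta)\hookrightarrow E_0(\omega^\delta)+E_1(\gamma+\omega^\delta)\xrightarrow{\mathrm{id}+E_1(h)}D(\omega^\delta)
\end{equation*}
is an order embedding by functoriality of~$E_1$. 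This is the embedding I would take.

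Next, I would exhibit a strict upper bound~$\tau^*\in D(\omega^\delta)$ for the image. Using that $E_1\neq 0$, pick any trace element $(\tau,k+1)\in\tr(E_1)$; by the remark just before Definition~\ref{def:import-coeff}, the arity of a trace element of a connected predilator~$\neq 1$ is at least~$1$, so $k\geq 0$. Additive indecomposability of~$\omega^\delta$ gives $\gamma,\gamma+1,\ldots,\gamma+k<\omega^\delta$, so the element
\begin{equation*}
\tau^*=(\tau;\gamma,\gamma+1,\ldots,\gamma+k;\omega^\delta)\in E_1(\omega^\delta)\subseteq D(\omega^\delta)
\end{equation*}
is well defined, with its $i(\tau)$-th support equal to $\gamma+i(\tau)\geq\gamma$.

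Finally, I would verify that every image element is strictly below~$\tau^*$. Image elements coming from the $E_0$-summand are below $\tau^*$ by the order on the sum $E_0(\omega^\delta)+E_1(\omega^\delta)$. An image element from the $E_1$-summand is the image under $E_1(h)$ of some
\begin{equation*}
(\sigma;a_0,\ldots,a_{i(\sigma)},\gamma+x_0,\ldots,\gamma+x_{m-1})\in\{E_1\}^\gamma(\omega^\delta),
\end{equation*}
where $a_{i(\sigma)}<\gamma$ by the defining condition of $\{E_1\}^\gamma$. The isomorphism $h$ fixes ordinals below~$\gamma$, so the $i(\sigma)$-th support of the image is still $a_{i(\sigma)}<\gamma\leq\gamma+i(\tau)$. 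Lemma~\ref{lem:import-coeff} then yields that the image is strictly below $\tau^*$ in $E_1(\omega^\delta)$, and hence in $D(\omega^\delta)$.

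The only conceptual step is recognising that Lemma~\ref{lem:import-coeff} is exactly tailored to this situation: it lets one compare two elements of a connected predilator purely by looking at their distinguished support indices. The rest is bookkeeping, and the main thing to be careful about is that $h$ is applied only to the upper layer $\gamma+X$, so that supports in the lower layer $\gamma$ pass unchanged to the image, which is what makes the comparison with $\tau^*$ via Lemma~\ref{lem:import-coeff} work uniformly.
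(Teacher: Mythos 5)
Your proof is correct and follows essentially the same route as the paper: identify $\gamma+\omega^\delta$ with $\omega^\delta$ (so that $\{D\}^\gamma(\omega^\delta)$ sits inside $D(\omega^\delta)$, modulo the reduction to the connected component, which the paper leaves implicit) and then bound the image by a single element whose most important argument is $\geq\gamma$, invoking Lemma~\ref{lem:import-coeff}. The only cosmetic difference is your choice of bounding element with all arguments $\gamma,\gamma+1,\ldots,\gamma+k$, where the paper picks one with $a_{i(\sigma)}=\gamma$; both work for the same reason.
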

\begin{proof}
It is straightforward to reduce to the case where $D\neq 1$ is connected. We have $\gamma+\omega^\delta=\omega^\delta$ and hence~$\{D\}^\gamma(\omega^\delta)\subseteq D(\omega^\delta)$. Choose $(\sigma;a_0,\ldots,a_n)\in D(\omega^\delta)$ with $a_{i(\sigma)}=\gamma$. This is possible due to~$\gamma+\omega\leq\omega^\delta$. Lemma~\ref{lem:import-coeff} entails that the chosen element bounds $\{D\}^\gamma(\omega^\delta)$.
\end{proof}

In the introduction, we have described a characterization of countable dilators based on their value on the least uncountable cardinal~$\Omega$. We now see that this characterization coincides with the one from Definition~\ref{def:dil-types}.

\begin{definition}\label{def:dil-preds}
For a predilator~$D$ and an ordinal~$\xi$, we define
\begin{equation*}
P_\xi(D)=\begin{cases}
\emptyset & \text{if $D$ has type~$0$},\\
\{D'\} & \text{if $D\cong D'+1$ has type~$1$},\\
\big\{\textstyle\sum_{\alpha<\nu}D_\alpha\,\big|\,\nu\in L\big\} & \text{if $D=\sum_{\alpha\in L}D_\alpha$ has type~$\omega$},\\
\big\{\{D\}^\gamma\,\big|\,\gamma<\xi\big\} & \text{if $D$ has type~$\Omega$}.
\end{cases}
\end{equation*}
In the third case, it is understood that the~$D_\alpha$ are connected, and $\alpha<\nu$ means that the sum is indexed by the suborder~$\{\alpha\in L\,|\,\alpha<_L\nu\}$.
\end{definition}

Let us note that $D'$ is a predecessor of~$D$ in the sense of Girard (see Theorem~3.5.1 of~\cite{girard-pi2}) precisely when we have $D'\in P_\xi(D)$ for some ordinal~$\xi$. The following result will be needed later.

\begin{lemma}[$\prs$]\label{lem:Omega-nat-trans}
(a) For~$D$ of type~$\Omega$ and~$\gamma\leq\delta$, we have~$\{D\}^\gamma\leq\{D\}^\delta$.

(b) Given $D\leq E$ for~$E$ of type~$\Omega$, we get $D\leq\{E\}^0$ or the predilator $D$ is also of type~$\Omega$ and we have~$\{D\}^\gamma\leq\{E\}^\gamma$ for every ordinal~$\gamma$.
\end{lemma}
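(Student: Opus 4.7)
My strategy for both parts is to reduce to connected predilators by invoking Proposition~\ref{prop:sums}, and then to build the required natural transformation from the explicit combinatorial description in Definition~\ref{def:sep-vars}.

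For part~(a), I would first observe that if $D = D_0 + D_1$ with $D_1$ connected and $\neq 1$, then $\{D\}^\gamma = D_0 + \{D_1\}^\gamma$ and $\{D\}^\delta = D_0 + \{D_1\}^\delta$, so it suffices to produce a natural transformation $\{D_1\}^\gamma \Rightarrow \{D_1\}^\delta$ (the identity on the $D_0$ summand takes care of the rest). For connected $D_1\neq 1$, the plan is to take the component at $X$ to be the restriction of $D_1(\iota + \mathrm{id}_X)\colon D_1(\gamma+X)\to D_1(\delta+X)$, where $\iota\colon\gamma\hookrightarrow\delta$ is the inclusion. To check that the image lies in $\{D_1\}^\delta(X)$, write a typical element in the form $(\sigma;\gamma_0,\ldots,\gamma_{i(\sigma)},\gamma+x_0,\ldots,\gamma+x_{m-1})$: under $\iota+\mathrm{id}_X$ the entries $\gamma_j<\gamma\leq\delta$ remain below $\delta$, while the remaining entries become $\delta+x_k\geq\delta$, so the defining condition $i(\sigma)=\max\{i : a_i<\delta\}$ is preserved. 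Naturality in $X$ is immediate from the identity $(\iota+\mathrm{id}_Y)\circ(\gamma+g)=(\delta+g)\circ(\iota+\mathrm{id}_X)$ in $\lo$ and the functoriality of $D_1$.

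For part~(b), I would decompose $D\cong\sum_{\alpha\in L}D_\alpha$ and $E\cong\sum_{\beta\in K}E_\beta$ into connected pieces via Proposition~\ref{prop:sums}(a). Since $E$ has type~$\Omega$, $K=\gamma'+1$ is a successor with $E_{\gamma'}$ connected and $\neq 1$, and Proposition~\ref{prop:sums}(b) yields an order embedding $f\colon L\to K$ with $D_\alpha\leq E_{f(\alpha)}$. If $\gamma'\notin\rng(f)$, then $D\leq\sum_{\beta<\gamma'}E_\beta$, which is a summand of $\{E\}^0 = \sum_{\beta<\gamma'}E_\beta + \{E_{\gamma'}\}^0$, yielding $D\leq\{E\}^0$. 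If $\gamma'\in\rng(f)$, then $L=\mu+1$ with $f(\mu)=\gamma'$ and $D\cong D'+D_\mu$ with $D_\mu\leq E_{\gamma'}$ connected. The key observation here is that any connected predilator $C\neq 1$ satisfies $C(0)=\emptyset$, because every trace element of such $C$ has positive support-size (as noted before Definition~\ref{def:import-coeff}). Consequently $1\not\leq E_{\gamma'}$, forcing $D_\mu\neq 1$, so $D$ is of type~$\Omega$. To deduce $\{D\}^\gamma\leq\{E\}^\gamma$ I would show $\{D_\mu\}^\gamma\leq\{E_{\gamma'}\}^\gamma$ by restricting the given $\nu\colon D_\mu\Rightarrow E_{\gamma'}$: any such $\nu$ is Cartesian and therefore preserves supports, and it also preserves import coefficients because the characterisation in Definition~\ref{def:import-coeff} transfers through the order-preserving component $\nu_{2n}$. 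Hence $\nu_{\gamma+X}$ restricts to a map $\{D_\mu\}^\gamma(X)\to\{E_{\gamma'}\}^\gamma(X)$, and together with $D'\leq\sum_{\beta<\gamma'}E_\beta$ this assembles into $\{D\}^\gamma = D'+\{D_\mu\}^\gamma\leq\sum_{\beta<\gamma'}E_\beta + \{E_{\gamma'}\}^\gamma = \{E\}^\gamma$.

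The main obstacle I anticipate is the apparent branch in~(b) where $D_\mu$ could a priori be trivial; recognising that $E_{\gamma'}(0)=\emptyset$ for connected $E_{\gamma'}\neq 1$ is what rules this out and ensures $D$ has type~$\Omega$. Once supports and import coefficients are known to transfer through natural transformations of predilators, the remaining verifications are routine book-keeping driven by the definitions.
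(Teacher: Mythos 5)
Your proposal is correct and follows essentially the same route as the paper: part (a) by the evident restriction of $D_1(\iota+\mathrm{id}_X)$, and part (b) by decomposing into connected summands via Proposition~\ref{prop:sums}, ruling out $D_\mu=1$ through the absence of trace elements with empty support, and using that natural transformations preserve supports and import coefficients (the paper phrases the latter as $i(\sigma)=i(\eta_n(\sigma))$ following from naturality, which is exactly your $\nu_{2n}$ argument). The only difference is presentational: the paper first treats connected $E\neq 1$ and then reduces the general case to it, while you handle the top summand directly.
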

\begin{proof}
Part~(a) is a straightforward consequence of the definition. To establish~(b), we first assume that~$E\neq 1$ is connected. Then $D$ must be connected in view of Proposition~\ref{prop:sums}(b). We must also have~$D\neq 1$, since~$1\leq E$ would require a trace element~$(0,\sigma)\in\tr(E)$ (cf.~the discussion before Definition~\ref{def:import-coeff}). Consider a natural transformation~$\eta:D\Rightarrow E$ and recall that the embedding $\eta_{\gamma+X}$ is given by
\begin{equation*}
D(\gamma+X)\ni(\sigma;a_0,\ldots,a_n)\mapsto(\eta_n(\sigma);a_0,\ldots,a_n)\in E(\gamma+X).
\end{equation*}
To see that this map restricts to an embedding of~$\{D\}^\gamma(X)$ into~$\{E\}^\gamma(X)$, one should note that~$i(\sigma)=i(\eta_n(\sigma))$ follows from the naturality of~$\eta$. For the general case, consider~$E=E_0+E_1$ with connected~$E_1\neq 1$. Using Proposition~\ref{prop:sums}(b) again, we get $D\leq E_0=\{E\}^0$ or $D=D_0+D_1$ with~$D_i\leq E_i$. In the latter case, the previous considerations yield $\{D_1\}^\gamma\leq\{E_1\}^\gamma$, which readily implies the claim.
\end{proof}

The previous material in this section is due to Girard~\cite{girard-pi2}. We now present induction and recursion principles for dilators that are new in this form. Specifically, Girard has proved more general versions of these principles in a strong metatheory (see again Theorem~3.5.1 of~\cite{girard-pi2}). Our contribution is the identification of restricted versions that are available already in~$\atrs$.

\begin{theorem}[$\atrs$; Guarded Induction on Dilators]\label{thm:guarded-ind}
Consider a primitive recursive class~$Q$ and function~$G$ such that $G(D)$ is an ordinal for every dilator~$D$. Assume that a dilator~$D$ lies in~$Q$ whenever we have $P_{G(D)}(D)\subseteq Q$. Then~$Q$ contains every dilator.
\end{theorem}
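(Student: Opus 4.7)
The plan is to fix an arbitrary dilator $D_0$ and reduce the goal $D_0\in Q$ to $\in$-induction on ordinals, by localizing the argument to the primitive recursive transitive closure of $\{D_0\}$ under predecessors. Concretely, define a primitive recursive $P$ by $P(D)=P_{G(D)}(D)$ when $D$ is a predilator and $P(D)=\emptyset$ otherwise, and form $T:=\mathsf{TC}^P(\{D_0\})$ exactly as in the proof of Proposition~\ref{prop:pred-ind}. Inspection of Definition~\ref{def:dil-preds}, together with the observation that $\{D\}^\gamma(X)\subseteq D(\gamma+X)$ is well-founded whenever $X$ is, shows that each of the four clauses produces dilators from dilators; hence every element of~$T$ is itself a dilator.

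The heart of the argument is to build, uniformly on $T$, a primitive recursive rank that strictly decreases along $P$. Since $T$ is a set and $G(D)$ an ordinal for each $D\in T$, the quantity
\[
\delta\;:=\;\textstyle\bigcup\{1+G(D)\mid D\in T\}
\]
is an ordinal, and by axiom beta so is $\alpha:=\otp(D_0(\omega^\delta))+1$. Using $\alpha$ and $\delta$ as parameters, Lemma~\ref{lem:otp-restr} supplies a primitive recursive rank $R(D):=\otp_\alpha\bigl(D(\omega^\delta)\bigr)$ on predilators. For $D\in T$ and $D'\in P_{G(D)}(D)$, I would check that $D'(\omega^\delta)$ embeds as a proper initial segment of $D(\omega^\delta)$: the types $1$ and $\omega$ are immediate from the definition of $P_\xi$, while the type $\Omega$ case is Lemma~\ref{lem:sep-pred} applied with $\gamma<G(D)\leq\delta\leq\omega^\delta$. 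Every iterated $P$-predecessor of $D_0$ then satisfies $\otp(D(\omega^\delta))\leq\otp(D_0(\omega^\delta))<\alpha$, so $\otp_\alpha$ does not truncate on $T$ and we obtain $R(D')\in R(D)$.

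With these preparations the argument finishes by $\in$-induction on $r$, applied to the $\Delta_0$-formula
\[
H(r)\;\equiv\;\forall y\in T\,\bigl(R(y)=r\to y\in Q\bigr).
\]
Given $y\in T$ with $R(y)=r$, the set $P_{G(y)}(y)$ lies in $T$ and has $R$-rank strictly below $r$, so the induction hypothesis yields $P_{G(y)}(y)\subseteq Q$; because $y$ is a dilator, the assumption of the theorem then forces $y\in Q$. Taking $r=R(D_0)$ delivers $D_0\in Q$. The main obstacle is the one circumvented at the outset: there is no global primitive recursive ranking of all dilators which decreases along $P_{G(-)}$, because $G$ may grow unboundedly on predecessors; it is the localization to~$T$ together with axiom beta that produces a single ordinal $\delta$ bounding $G$ uniformly on the closure, and hence a workable primitive recursive rank.
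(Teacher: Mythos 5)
Your overall route coincides with the paper's: close $\{D_0\}$ under $E\mapsto P_{G(E)}(E)$ by primitive recursion along~$\omega$ to get a set~$T$, bound $G$ on~$T$ by an ordinal~$\delta$, use axiom beta to bound $\otp(D_0(\omega^\delta))$ by some~$\alpha$, take $R(E)=\otp_\alpha(E(\omega^\delta))$ via Lemma~\ref{lem:otp-restr} as a primitive recursive rank that strictly decreases along predecessors (Lemma~\ref{lem:sep-pred} for type~$\Omega$, inspection for types~$1$ and~$\omega$), and conclude by the $\Delta_0$ instance of $\in$-induction, i.\,e., by Proposition~\ref{prop:pred-ind}, which you inline. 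The verification that $\otp_\alpha$ never truncates on~$T$ is also fine, since that statement is primitive recursive and can be carried along the stages of the closure.

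The one step that does not survive as written is the claim that every element of~$T$ is a dilator. You argue that each clause of Definition~\ref{def:dil-preds} produces dilators from dilators and conclude ``hence every element of~$T$ is itself a dilator''. That ``hence'' is an induction along the stages $\mathsf{TC}^P_n(\{D_0\})$ on the statement ``all elements are dilators'', which is not primitive recursive (it quantifies over all well orders), and $\atrs$ only provides induction for primitive recursive properties; this is precisely the point the paper flags with ``even in the absence of $\Pi^1_2$-induction''. You cannot postpone the issue either: dilatorhood of the elements of~$T$ is needed before~$\delta$ can be formed (each $G(E)$ must be an ordinal, which the hypothesis only guarantees for dilators) and again when the hypothesis of the theorem is applied at $y\in T$. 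The repair is the one the paper uses, and your parenthetical observation $\{D\}^\gamma(X)\subseteq D(\gamma+X)$ is its seed: by primitive recursion on the stage~$n$ one constructs, uniformly in~$X$, explicit embeddings $E(X)\to D_0(\gamma_E+X)$ for suitable ordinals~$\gamma_E$ (accumulating the offsets from the type-$\Omega$ clause; for types~$1$ and~$\omega$ the predecessor is a subdilator). Well-foundedness of each $E(X)$ then follows directly from that of $D_0(\gamma_E+X)$, with no induction on a non-primitive-recursive statement. With this adjustment your argument is essentially the paper's proof.
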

\begin{proof}
Under the assumptions of the theorem, we show~$D\in Q$ for a fixed dilator~$D$. To find a bound~$\xi$ on all relevant values of~$G$, we set $P^\omega(D)=\bigcup_{n<\omega}P^n(D)$ with
\begin{equation*}
P^0(D)=\{D\}\quad\text{and}\quad P^{n+1}(D)=\bigcup\{P_{G(E)}(E)\,|\,E\in P^n(D)\}.
\end{equation*}
Even in the absence of $\Pi^1_2$-induction, we can show that every~$E\in P^n(D)$ is a dilator, as another recursion on~$n$ yields embeddings~$E(X)\to D(\gamma+X)$ for suitable ordinals~$\gamma$. Let us now put
\begin{equation*}
\xi=\sup\{G(E)\,|\,E\in P^\omega(D)\}.
\end{equation*}
Using axiom beta, we may pick an ordinal~$\eta$ that exceeds the order type of~$D(\omega^{1+\xi})$. Due to Lemma~\ref{lem:sep-pred} (see also Lemma~\ref{lem:otp-restr}), an induction on~$n$ yields
\begin{equation*}
\otp_\eta(E(\omega^{1+\xi}))<\eta\quad\text{for every }E\in P^n(D).
\end{equation*}
As part of the induction step, one can observe that we even have
\begin{equation*}
\otp_\eta(E'(\omega^{1+\xi}))<\otp_\eta(E(\omega^{1+\xi}))\quad\text{for }E'\in P_{G(E)}(E)\text{ with }E\in P^\omega(D).
\end{equation*}
We can thus conclude by Proposition~\ref{prop:pred-ind} with predecessors
\begin{equation*}
P(E)=\begin{cases}
P_{G(E)}(E) & \text{for }E\in P^\omega(D),\\
\emptyset & \text{otherwise},
\end{cases}
\end{equation*}
ranks $R(E)=\otp_\eta(E(\omega^{1+\xi}))$ and~$\{E\,|\,E\in P^\omega(D)\to E\in Q\}$ at the place of~$Q$.
\end{proof}

Note that the proof of the previous theorem reduces to the case where~$G(D)=\xi$ is constant. In fact, it is not quite clear what other~$G$ would be useful in applications. As the collapse of an arbitrary well order onto an ordinal is not primitive recursive, we cannot even define~$G(D)$ to be a value like $D(\omega)$ (note that Definition~\ref{def:dil-extend} only produces well orders and not ordinals). For this reason, we formulate the following for constant~$G$ only.

\begin{theorem}[$\atrs$; Guarded Recursion on Dilators]\label{thm:guarded-rec}
Given a primitive recursive function~$H$, we get a primitive recursive~$F$ such that
\begin{equation*}
F(D,\eta,\xi)=F(D,\eta,\xi')=H\big(F(\cdot,\eta,\xi)\restriction P_\eta(D)\big)
\end{equation*}
holds for any dilator~$D$ and any ordinals~$\eta$ and~$\xi\leq\xi'$ such that $D(\omega^{1+\eta})$ has order type below $\xi$.
\end{theorem}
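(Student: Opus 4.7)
The plan is to appeal to Proposition~\ref{prop:pred-rec}, with triples $(D,\eta,\xi)$ as the recursion arguments, building the requisite predecessor and rank functions from the primitive recursive decomposition supplied by Proposition~\ref{prop:sums}(a) together with the truncated order-type function from Lemma~\ref{lem:otp-restr}.

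The first step is to set
\begin{equation*}
R(D,\eta,\xi)=\otp_\xi(D(\omega^{1+\eta})),
\end{equation*}
which is primitive recursive and takes values in~$\xi$. Next I would define
\begin{equation*}
P(D,\eta,\xi)=\big\{(D',\eta,\xi)\,\big|\,D'\in P_\eta(D)\text{ and }R(D',\eta,\xi)\in R(D,\eta,\xi)\big\},
\end{equation*}
where $P_\eta(D)$ is computed via the sum decomposition of Proposition~\ref{prop:sums}(a) and the case distinction of Definition~\ref{def:dil-preds}. The filter through~$R$ guarantees that the monotonicity hypothesis of Proposition~\ref{prop:pred-rec} holds for \emph{every} triple (including non-dilators and triples where the order type exceeds~$\xi$), at the price of making $P(D,\eta,\xi)$ artificially empty in those cases. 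I would then replace~$H$ by a primitive recursive function~$\tilde H$ which, applied to a function defined on triples sharing their second and third coordinates, discards these coordinates and applies~$H$ to the resulting function on the first coordinates. Proposition~\ref{prop:pred-rec} now produces a primitive recursive~$F$ with $F(D,\eta,\xi)=\tilde H(F\restriction P(D,\eta,\xi))$.

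It remains to verify the two equations in the statement. For the recursive clause, fix a dilator~$D$ and ordinals~$\eta,\xi$ with $\otp(D(\omega^{1+\eta}))<\xi$. Lemma~\ref{lem:sep-pred} -- together with the trivial successor and sum cases -- shows that $D'(\omega^{1+\eta})$ is a proper initial segment of $D(\omega^{1+\eta})$ for every $D'\in P_\eta(D)$, so $R(D',\eta,\xi)<R(D,\eta,\xi)$ and the filter in~$P$ discards nothing; consequently $\tilde H$ effectively receives $F(\cdot,\eta,\xi)\restriction P_\eta(D)$, as required. For the $\xi$-independence I would perform a guarded induction on~$D$ via Proposition~\ref{prop:pred-ind} with the same $(P,R)$: when $\xi\leq\xi'$ both bound $\otp(D(\omega^{1+\eta}))$, the inductive hypothesis gives $F(D',\eta,\xi)=F(D',\eta,\xi')$ for every $D'\in P_\eta(D)$, and $\otp_\xi$ and $\otp_{\xi'}$ agree on these orders, so $\tilde H$ returns the same value in both cases.

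The main obstacle I expect is not the appeal to Proposition~\ref{prop:pred-rec} itself but the bookkeeping needed to keep~$P$ and~$R$ primitive recursive \emph{on all of~$V$} -- that is, on inputs that need not be dilators or satisfy the order-type bound -- while preserving the strict predecessor-rank inequality. The truncation through~$\otp_\xi$ and the explicit filter in the definition of~$P$ are the devices that handle this, mirroring the strategy used in the proof of Theorem~\ref{thm:guarded-ind}.
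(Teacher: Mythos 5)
Your proposal matches the paper's proof in all essentials: the paper likewise applies Proposition~\ref{prop:pred-rec} with rank $R(D)=\otp_\xi(D(\omega^{1+\eta}))$ and predecessors $P_\eta(D)$ guarded by the condition $\otp_\xi(D(\omega^{1+\eta}))<\xi$ (your per-predecessor rank filter is a cosmetic variant of this guard), verifies the recursive clause via Lemma~\ref{lem:sep-pred}, and derives the $\xi$-independence by guarded induction on~$D$, which -- since the bound~$\xi$ is given explicitly -- amounts to precisely the appeal to Proposition~\ref{prop:pred-ind} that you describe. The only presentational difference is that the paper keeps $\eta,\xi$ as implicit parameters rather than recursing on triples, so no wrapper~$\tilde H$ is needed.
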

Note that implicit parameters are always permitted, as agreed in the paragraph before Proposition~\ref{prop:pred-ind}. In particular, $H$ may refer to~$D$ as an additional argument. It should not refer to~$\xi$ if we want $F(D,\eta,\xi)=F(D,\eta,\xi')$ as indicated. In applications, we also want $F(D,\eta,\xi)$ to be the same for all large enough~$\eta$, but this seems hard to guarantee on a general level.
\begin{proof}
For a predilator~$D$ and ordinals~$\eta,\xi$, we put $R(D)=\otp_\xi(D(\omega^{1+\eta}))$ and
\begin{equation*}
P(D)=\begin{cases}
P_\eta(D) & \text{if }\otp_\xi(D(\omega^{1+\eta}))<\xi,\\
\emptyset & \text{otherwise}.
\end{cases}
\end{equation*}
Lemma~\ref{lem:sep-pred} ensures that $D'\in P(D)$ implies $R(D')<R(D)$, as in the previous proof. By Proposition~\ref{prop:pred-rec}, we get a primitive recursive~$F$ with
\begin{equation*}
F(D,\eta,\xi)=H(\{(E,F(E,\eta,\xi))\,|\,E\in P(D)\}).
\end{equation*}
When $D(\omega^{1+\eta})$ is well-founded with order type below~$\xi$, we have $P(D)=P_\eta(D)$, which yields the second equation from the lemma. For fixed~$\eta$ and $\xi\leq\xi'$, we get
\begin{equation*}
\otp_\xi(D(\omega^{1+\eta}))<\xi\quad\Rightarrow\quad F(D,\eta,\xi)=F(D,\eta,\xi')
\end{equation*}
by guarded induction on~$D$, i.\,e., by the previous proposition (with $G(D)=\eta$).
\end{proof}

Note that we could have proved the theorem without axiom beta and the assumption that~$D$ is a dilator, since an ordinal bound on~$D(\omega^{1+\eta})$ was given explicitly. However, we then typically need axiom beta to find a bound that allows us to apply the theorem. 

\section{The Functor~$\mathbb J$ of P\"appinghaus}\label{sect:J}

In this section, we show that our base theory~$\atrs$ can represent the functor~$\mathbb J$ of P\"appinghaus (see~\cite{paeppinghaus-bachmann} with $g=1$) as a partial object. We then show that the totality of~$\mathbb J$ follows from $\Pi^1_1$-comprehension (used in the form of admissible sets).

The following definition mimics the clauses for~$\mathbb J$ that were given in the introduction (see also Proposition~\ref{prop:clauses-J} below). It relies on Theorem~\ref{thm:guarded-rec}, which is used to construct the characteristic function of the desired relation. Note that clause~(iv) below refers to~$\{D\}^\alpha$ only when we have~$\alpha\leq\delta<\eta$ and hence~$\{D\}^\alpha\in P_\eta(D)$. We point out that the condition $\otp(D(\omega^{1+\eta}))<\xi$ is inherited from the cited theorem.

\begin{definition}\label{def:J}
Let $\mathbb J(D,\gamma)\simeq_\xi^\eta\delta$ be a primitive recursive relation that holds of a dilator~$D$ and ordinals~$\gamma,\delta,\eta,\xi$ precisely when $\delta<\eta$ and $\otp(D(\omega^{1+\eta}))<\xi$ hold and one of the following applies:
\begin{enumerate}[label=(\roman*)]
\item $D$ has type~$0$ and we have $\delta=\gamma$,
\item $D\cong D'+1$ has type~$1$ and~$\delta=\delta'+1$ is a successor with~$\mathbb J(D',\gamma)\simeq_\xi^\eta\delta'$,
\item $D=\sum_{\alpha<\lambda}D_\alpha$ has type~$\omega$ (with connected~$D_\alpha$) and $\delta=\sup_{\nu<\lambda}f(\nu)$ holds for a function~$f:\lambda\to\delta+1$ with $\mathbb J(\sum_{\alpha<\nu}D_\alpha,\gamma)\simeq_\xi^\eta f(\nu)$ for all~$\nu<\lambda$,
\item $D$ has type~$\Omega$ and we have $\delta=\alpha+\beta$ for ordinals~$\alpha,\beta$ such that we have $\mathbb J(\{D\}^0,\gamma)\simeq_\xi^\eta\alpha$ and $\mathbb J(\{D\}^\alpha,\gamma)\simeq_\xi^\eta\beta$.
\end{enumerate}
We declare that $\mathbb J(D,\gamma)\simeq\delta$ holds if we have $\mathbb J(D,\gamma)\simeq_\xi^\eta\delta$ for some ordinals~$\eta,\xi$. 
\end{definition}

Concerning clause~(iv), it is instructive to observe that we have~$\{D_0+E\}^0=D_0$ for connected~$E\neq 1$ (cf.~Definition~\ref{def:sep-vars}). Let us show that our relation is largely independent of~$\eta$ and~$\xi$.

\begin{lemma}[$\atrs$]\label{lem:J-indep}
We have
\begin{equation*}
\mathbb J(D,\gamma)\simeq_\xi^\eta\delta\quad\Leftrightarrow\quad\mathbb J(D,\gamma)\simeq_\zeta^\mu\delta
\end{equation*}
whenever we have~$\delta<\eta,\mu$ as well as $\otp(D(\omega^{1+\eta}))<\xi$ and $\otp(D(\omega^{1+\mu}))<\zeta$.
\end{lemma}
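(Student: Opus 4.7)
My plan is to prove the equivalence by guarded induction on the dilator $D$ via Theorem~\ref{thm:guarded-ind}. The hypotheses on $(\eta,\xi)$ and $(\mu,\zeta)$ are symmetric, so it suffices to establish only the forward implication. I would fix $\gamma,\eta,\mu,\xi,\zeta$ externally and consider the primitive recursive class
\[
Q = \bigl\{D : \forall \delta < \mu \,\bigl( \mathbb J(D,\gamma)\simeq_\xi^\eta \delta \to \mathbb J(D,\gamma)\simeq_\zeta^\mu \delta\bigr)\bigr\}.
\]
The class $Q$ is primitive recursive because the quantifier over $\delta$ is bounded by the set $\mu$ and the matrix is primitive recursive by Definition~\ref{def:J}. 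Guarded induction with the constant function $G(D)=\mu$ then reduces the task to verifying the implication $P_\mu(D) \subseteq Q \Rightarrow D \in Q$, which I would establish by case analysis on the clause of Definition~\ref{def:J} that witnesses $\mathbb J(D,\gamma)\simeq_\xi^\eta\delta$.

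Clauses~(i)--(iii) are routine. In type~$0$ the value is $\delta=\gamma$ and both order-type conditions are already given as hypotheses of the lemma. In type~$1$ the unique predecessor $D'$ lies in $P_\mu(D)$, the value $\delta' = \delta - 1 < \mu$, and the order-type condition on $D'$ at $\omega^{1+\mu}$ is inherited since $D'$ embeds naturally into $D$; the induction hypothesis then transports the clause to $(\mu,\zeta)$. In type~$\omega$ every partial sum $\sum_{\alpha<\nu}D_\alpha$ lies in $P_\mu(D)$ and embeds naturally into $D$, so the witnessing function $f \colon \lambda \to \delta+1$ carries over unchanged and the induction hypothesis applies to each $\nu<\lambda$ because $f(\nu) \leq \delta < \mu$.

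The main case is type~$\Omega$, where $\delta = \alpha+\beta$ with $\mathbb J(\{D\}^0,\gamma)\simeq_\xi^\eta\alpha$ and $\mathbb J(\{D\}^\alpha,\gamma)\simeq_\xi^\eta\beta$. Since $\alpha \leq \alpha+\beta = \delta < \mu$, both $\{D\}^0$ and $\{D\}^\alpha$ lie in $P_\mu(D)$. The order-type hypotheses for these predecessors at $\omega^{1+\mu}$ are obtained from Lemma~\ref{lem:sep-pred}: from $\alpha < \mu \leq \omega^{1+\mu}$ one has that $\{D\}^\alpha(\omega^{1+\mu})$ (and likewise $\{D\}^0(\omega^{1+\mu})$) embeds into a proper initial segment of $D(\omega^{1+\mu})$, whose order type is below~$\zeta$ by hypothesis. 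The induction hypothesis then yields $\mathbb J(\{D\}^0,\gamma)\simeq_\zeta^\mu\alpha$ and $\mathbb J(\{D\}^\alpha,\gamma)\simeq_\zeta^\mu\beta$, and clause~(iv) of Definition~\ref{def:J} gives $\mathbb J(D,\gamma)\simeq_\zeta^\mu\delta$.

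The main obstacle I anticipate is the bookkeeping in the type-$\Omega$ case: one must verify that the $\alpha$ produced by the internal subrecursion is bounded well enough that $\{D\}^\alpha$ genuinely belongs to $P_\mu(D)$ and inherits the order-type condition at $\omega^{1+\mu}$. This ultimately rests on the arithmetic inequality $\alpha \leq \delta < \mu$ together with Lemma~\ref{lem:sep-pred}; the remaining work in the other clauses is essentially formal manipulation of the recursive definitions, aided by the fact that sub-predilators of $D$ embed into $D$ at every argument and hence inherit the order-type bounds automatically.
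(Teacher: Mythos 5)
Your overall strategy is the paper's: guarded induction on the dilator (Theorem~\ref{thm:guarded-ind} with constant $G$), with Lemma~\ref{lem:sep-pred} supplying the order-type bound for the predecessors $\{D\}^\alpha$ in the type-$\Omega$ case, and the choice $G(D)=\mu$ rather than the paper's $G(D)=\eta$ is immaterial, as is your symmetry reduction to one implication. However, there is a genuine flaw in the formulation of the induction class. Your
\[
Q = \bigl\{D : \forall \delta < \mu \,\bigl( \mathbb J(D,\gamma)\simeq_\xi^\eta \delta \to \mathbb J(D,\gamma)\simeq_\zeta^\mu \delta\bigr)\bigr\}
\]
is a statement about \emph{all} dilators (that is what Theorem~\ref{thm:guarded-ind} concludes), but it is simply false for dilators $E$ with $\otp(E(\omega^{1+\eta}))<\xi$ and $\otp(E(\omega^{1+\mu}))\geq\zeta$: the conclusion $\mathbb J(E,\gamma)\simeq_\zeta^\mu\delta$ builds in the condition $\otp(E(\omega^{1+\mu}))<\zeta$ by Definition~\ref{def:J}, and nothing in the induction step provides it. The hypotheses of the lemma only give this bound for the \emph{fixed} $D$ of the statement, while the guarded induction ranges over arbitrary dilators; your case analysis conflates the two (e.g.\ ``both order-type conditions are already given as hypotheses of the lemma'' in type~$0$, and ``inherited since $D'$ embeds naturally into $D$'' in type~$1$, where $D$ should be the induction variable). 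So with $Q$ as written the induction step cannot be verified, and indeed $Q$ need not contain every dilator.

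The repair is exactly what the paper does implicitly: take as induction statement the lemma's full conditional claim, e.g.
\[
Q = \bigl\{E : \otp(E(\omega^{1+\mu}))<\zeta \,\to\, \forall \delta<\mu\,\bigl(\mathbb J(E,\gamma)\simeq_\xi^\eta\delta \to \mathbb J(E,\gamma)\simeq_\zeta^\mu\delta\bigr)\bigr\},
\]
which is still primitive recursive. Then in the induction step you must check that the guard passes to the predecessors: for types $1$ and $\omega$ because $D'$ and the partial sums embed naturally into $E$, hence $\otp$ of their values at $\omega^{1+\mu}$ is at most that of $E$; for type $\Omega$ by Lemma~\ref{lem:sep-pred}, using $\alpha\leq\delta<\mu\leq\omega^{1+\mu}$, exactly as you sketch. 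With this guard added, your argument coincides with the paper's proof.
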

\begin{proof}
For fixed~$\gamma,\eta,\xi,\mu,\zeta$, guarded induction on~$D$ (Theorem~\ref{thm:guarded-ind} with $G(D)=\eta$) shows that the claim holds for all~$\delta<\min(\eta,\mu)$ (where the bound on~$\delta$ ensures that the induction statement is primitive recursive). Concerning clause~(iv) of the previous definition, we note that the induction hypothesis applies since Lemma~\ref{lem:sep-pred} ensures that we get $\otp(\{D\}^\alpha(\omega^{1+\eta}))<\xi$ for~$\alpha\leq\delta<\eta$.
\end{proof}

We can deduce that $\mathbb J$ is a partial function.

\begin{proposition}[$\atrs$]\label{prop:J-partial-fct}
Given $\mathbb J(D,\gamma)\simeq\delta$ and $\mathbb J(D,\gamma)\simeq\delta'$, we get $\delta=\delta'$.
\end{proposition}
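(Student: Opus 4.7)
The plan is to first reduce, via Lemma~\ref{lem:J-indep}, to uniqueness for a fixed choice of parameters $\eta,\xi$, and then to prove uniqueness by guarded induction on~$D$ using Theorem~\ref{thm:guarded-ind}.

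First I would use Lemma~\ref{lem:J-indep} to bring any two witnessing tuples $(\eta_0,\xi_0)$ and $(\eta_1,\xi_1)$ for $\mathbb J(D,\gamma)\simeq\delta$ and $\mathbb J(D,\gamma)\simeq\delta'$ under a single pair $(\eta,\xi)$ large enough to accommodate both $\delta,\delta'$ and the order type of $D(\omega^{1+\eta})$ (the latter existing by axiom beta). With $\eta,\xi$ so fixed, the statement
\begin{equation*}
Q(D) \;\equiv\; \forall\, \delta,\delta'<\eta\, \big(\mathbb J(D,\gamma)\simeq_\xi^\eta\delta \,\wedge\, \mathbb J(D,\gamma)\simeq_\xi^\eta\delta' \to \delta=\delta'\big)
\end{equation*}
has only bounded quantifiers and is therefore primitive recursive, so guarded induction on~$D$ with constant $G(D)=\eta$ (Theorem~\ref{thm:guarded-ind}) is applicable.

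For the induction step I would branch on the type of~$D$ in Definition~\ref{def:dil-types}, noting that the four types are mutually exclusive thanks to the essential uniqueness of the connected-sum decomposition from Proposition~\ref{prop:sums}(a). Types~$0$ and~$1$ are routine: the former forces $\delta=\delta'=\gamma$, the latter pushes the question down to the unique predecessor $D'\in P_\eta(D)$. The type-$\omega$ case reduces to pointwise equality of the sup-witnessing functions $f,f':\lambda\to\eta$ specified by clause~(iii), which follows by applying the IH to each $\sum_{\alpha<\nu}D_\alpha\in P_\eta(D)$.

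The main obstacle is the type-$\Omega$ case, because the two derivations yield decompositions $\delta=\alpha+\beta$ and $\delta'=\alpha'+\beta'$ in which the ``second predecessor'' $\{D\}^\alpha$ depends on the ordinal~$\alpha$ one is trying to prove unique. My approach is a two-step argument within a single inductive call: first apply the IH to $\{D\}^0\in P_\eta(D)$ (which is available since $0<\eta$) to conclude $\alpha=\alpha'$ from $\mathbb J(\{D\}^0,\gamma)\simeq_\xi^\eta\alpha,\alpha'$; then, using $\alpha\le\delta<\eta$ to see $\{D\}^\alpha\in P_\eta(D)$, apply the IH a second time to the now unambiguous dilator $\{D\}^\alpha=\{D\}^{\alpha'}$ to deduce $\beta=\beta'$, whence $\delta=\\alpha+\beta=\alpha'+\beta'=\delta'$.
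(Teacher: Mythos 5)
Your proposal is correct and follows essentially the same route as the paper: apply Lemma~\ref{lem:J-indep} to place both witnesses under a common pair $(\eta,\xi)$ and then argue by guarded induction on~$D$ (the paper leaves the case analysis, including your two-step treatment of type~$\Omega$ via $\{D\}^0$ and then $\{D\}^\alpha$, as a routine verification). The only blemishes are cosmetic, e.g.\ the stray double backslash in $\delta=\alpha+\beta$ and writing $f:\lambda\to\eta$ instead of $f:\lambda\to\delta+1$.
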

\begin{proof}
Due to the previous lemma, we can assume $\mathbb J(D,\gamma)\simeq^\eta_\xi\delta$ and $\mathbb J(D,\gamma)\simeq^\eta_\xi\delta'$ for common~$\eta,\xi$. By guarded induction on~$D$, one readily derives~$\delta=\delta'$.
\end{proof}

In particular, there is a unique function~$f$ as in clause~(iii) of Definition~\ref{def:J} as soon as a value~$\delta_\nu\leq\delta$ with $\mathbb J(\sum_{\alpha<\nu}D_\alpha,\gamma)\simeq^\eta_\xi\delta_\nu$ exists for each~$\nu<\lambda$. Note that $\Delta_0$-collection (which is available in Kripke-Platek set theory but not in~$\atrs$) may be needed to find common bounds~$\delta,\eta,\xi$ if they are not already given. 

The following provides further confirmation that we have managed to capture the original definition of~$\mathbb J$ by P\"appinghaus, which was given in the introduction.

\begin{proposition}[$\atrs$]\label{prop:clauses-J}
For a dilator~$D$ and ordinals~$\gamma,\delta$, we have $\mathbb J(D,\gamma)\simeq\delta$ precisely when one of the following applies:
\begin{enumerate}[label=(\roman*)]
\item $D$ has type~$0$ and we have $\delta=\gamma$,
\item $D\cong D'+1$ has type~$1$ and~$\delta=\delta'+1$ is a successor with~$\mathbb J(D',\gamma)\simeq\delta'$,
\item $D\cong\sum_{\alpha<\lambda}D_\alpha$ has type~$\omega$ (with connected~$D_\alpha$) and $\delta=\sup_{\nu<\lambda}f(\nu)$ holds for a function~$f:\lambda\to\delta+1$ with $\mathbb J(\sum_{\alpha<\nu}D_\alpha,\gamma)\simeq f(\nu)$ for all~$\nu<\lambda$,
\item $D$ has type~$\Omega$ and we have $\delta=\alpha+\beta$ for (necessarily unique) ordinals~$\alpha,\beta$ such that we have $\mathbb J(\{D\}^0,\gamma)\simeq\alpha$ and $\mathbb J(\{D\}^\alpha,\gamma)\simeq\beta$.
\end{enumerate}
\end{proposition}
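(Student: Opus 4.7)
The plan is to prove both directions of the equivalence by translating between the parameter-free relation $\simeq$ and the parameterized relation $\simeq_\xi^\eta$ of Definition~\ref{def:J}, exploiting the invariance Lemma~\ref{lem:J-indep} and the availability of axiom beta in $\atrs$.

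For the forward direction, assume $\mathbb J(D,\gamma)\simeq\delta$ and pick ordinals $\eta,\xi$ witnessing $\mathbb J(D,\gamma)\simeq_\xi^\eta\delta$. By Definition~\ref{def:J} one of the parameterized clauses~(i)--(iv) holds, and each sub-assertion of the form $\mathbb J(\cdot,\gamma)\simeq_\xi^\eta\cdot$ immediately implies the corresponding parameter-free $\mathbb J(\cdot,\gamma)\simeq\cdot$ by existentially quantifying over the parameters. The uniqueness of $\alpha,\beta$ in clause~(iv) is supplied by Proposition~\ref{prop:J-partial-fct}.

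The substantial direction is the converse. Assume one of clauses~(i)--(iv) of the proposition; the goal is to produce common witnessing parameters. First I would pick $\eta$ large enough so that $\delta<\eta$; in particular, in clause~(iv) this gives $\alpha,\beta\leq\delta<\eta<\omega^{1+\eta}$, and in clause~(iii) we get $f(\nu)\leq\delta<\eta$ for all $\nu<\lambda$. Using axiom beta we can form $\otp(D(\omega^{1+\eta}))$ and choose $\xi$ strictly above it. The crucial check is that every sub-dilator $E$ mentioned in the clause satisfies $\otp(E(\omega^{1+\eta}))<\xi$: in clause~(ii), $D'\leq D'+1\cong D$; in clause~(iii), each partial sum $\sum_{\alpha<\nu}D_\alpha$ is a sub-dilator of $D$; in clause~(iv), Lemma~\ref{lem:sep-pred} applies because $\alpha\leq\delta<\eta<\omega^{1+\eta}$, which embeds $\{D\}^\alpha(\omega^{1+\eta})$ into a proper initial segment of $D(\omega^{1+\eta})$. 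Once these bounds are in hand, each hypothesised sub-assertion $\mathbb J(E,\gamma)\simeq\delta_E$ comes with some witnessing parameters, and Lemma~\ref{lem:J-indep} transfers it to $\mathbb J(E,\gamma)\simeq_\xi^\eta\delta_E$ with our common $\eta,\xi$. Assembling these yields the parameterized clause of Definition~\ref{def:J}, hence $\mathbb J(D,\gamma)\simeq_\xi^\eta\delta$, and therefore $\mathbb J(D,\gamma)\simeq\delta$.

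The main obstacle is clause~(iii), where we must simultaneously transfer $\lambda$-many sub-assertions to common parameters without invoking $\Delta_0$-collection (which is unavailable in $\atrs$). The device is to fix $(\eta,\xi)$ in advance, depending only on $D$ and $\delta$ rather than on the individual values $f(\nu)$, and to exploit the fact that every partial sum embeds into $D$ so that the order-type bound is inherited uniformly.
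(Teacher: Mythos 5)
Your proposal is correct and follows essentially the same route as the paper: both directions are handled by translating between $\simeq$ and $\simeq^\eta_\xi$, choosing $\eta>\delta$ and (via axiom beta) $\xi$ above $\otp(D(\omega^{1+\eta}))$, checking that the relevant sub-dilators inherit this bound (via embeddings into $D$, and Lemma~\ref{lem:sep-pred} in the type~$\Omega$ case), and then using Lemma~\ref{lem:J-indep} to move all given witnessing parameters to the common pair $(\eta,\xi)$ before applying Definition~\ref{def:J}. Your observation that this avoids any use of $\Delta_0$-collection, since $(\eta,\xi)$ depend only on $D$ and $\delta$ and the function $f$ in clause~(iii) is already given, matches the paper's treatment exactly.
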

\begin{proof}
One distinguishes cases according to the type of~$D$. As a representative~example, we assume~(iii) and derive~$\mathbb J(D,\gamma)\simeq\delta$. We are thus given
\begin{equation*}
\mathbb J\big(\textstyle\sum_{\alpha<\nu}D_\alpha,\gamma\big)\simeq^{\eta(\nu)}_{\xi(\nu)}f(\nu)\leq\delta
\end{equation*}
for all~$\nu<\lambda$ and suitable~$\eta(\nu),\xi(\nu)$. Set $\eta=\delta+1$ and~$\xi=\otp(D(\omega^{1+\eta}))$, where the latter exists by axiom beta. Due to Lemma~\ref{lem:J-indep}, we may assume that we have $\eta(\nu)=\eta$ and $\xi(\nu)=\xi$ for all~$\nu<\lambda$. Now Definition~\ref{def:J} yields~$\mathbb J(D,\gamma)\simeq^\eta_\xi\delta$.
\end{proof}

Next, we show that~$\atrs$ proves basic properties of~$\mathbb J$ (originally due to P\"apping\-haus) under the assumption that the relevant values of~$\mathbb J$ exist. The~following result corresponds to Lemma~1.4 of~\cite{paeppinghaus-bachmann}. Let us recall that we write $D\leq D'$ for dilators~$D$ and~$D'$ if there is a natural transformation~$D\Rightarrow D'$. In statement~(a) of the following lemma, we write $\mathbb J(D,\gamma)\lesssim\mathbb J(D',\gamma')$ to assert that if $\mathbb J(D',\gamma')\simeq\delta'$ is defined then so is $\mathbb J(D,\gamma)\simeq\delta$ with $\delta\leq\delta'$. Statement~(b) means that $\mathbb J(D+E,\gamma)\simeq\delta$ holds precisely if there is a (necessarily unique) ordinal~$\alpha$ that validates $\mathbb J(D,\gamma)\simeq\alpha$ and $\mathbb J(E,\alpha)\simeq\delta$. Similar notation is well-established in the context of partial functions on the natural numbers.

\begin{lemma}[$\atrs$]\label{lem:J-nat-trans}
(a) Given $\gamma\leq\gamma'$ and~$D\leq D'$, we get $\mathbb J(D,\gamma)\lesssim\mathbb J(D',\gamma')$.

(b) We have $\mathbb J(D+E,\gamma)\simeq\mathbb J(E,\mathbb J(D,\gamma))$.
\end{lemma}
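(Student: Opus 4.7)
The plan is to prove~(a) first by guarded induction on~$D'$, then to appeal to~(a) inside a second guarded induction, this time on~$E$, to establish~(b). For part~(a), I fix $\gamma\leq\gamma'$ and let $Q(D')$ assert that for every $D\leq D'$ and every $\delta'$ with $\mathbb J(D',\gamma')\simeq\delta'$ there exists $\delta\leq\delta'$ with $\mathbb J(D,\gamma)\simeq\delta$. Once $\mathbb J(D',\gamma')\simeq_{\xi}^{\eta}\delta'$ is fixed, the embedding $D(\omega^{1+\eta})\hookrightarrow D'(\omega^{1+\eta})$ ensures that the same guards serve~$D$, so Lemma~\ref{lem:J-indep} lets me harmonise the $(\eta,\xi)$ parameters across each induction step. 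The argument then proceeds by case analysis on the type of~$D'$. Types~$0$ and~$1$ are immediate from Proposition~\ref{prop:sums}(b), which forces the connected decomposition of $D$ to match that of~$D'$. For $D'=\sum_{\beta<\lambda}D'_\beta$ of type~$\omega$, Proposition~\ref{prop:sums}(b) yields $D=\sum_{\alpha\in K}D_\alpha$ with an order embedding $h\colon K\to\lambda$ satisfying $D_\alpha\leq D'_{h(\alpha)}$; every initial sum of $D$ then embeds into a member of $P_\eta(D')$ of the form $\sum_{\beta<h(\mu)}D'_\beta$, and applying the IH termwise bounds $\mathbb J(D,\gamma)$ by $\mathbb J(D',\gamma')$ after splitting on whether $K$ is a limit or a successor. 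For $D'$ of type~$\Omega$, Lemma~\ref{lem:Omega-nat-trans}(b) splits the argument: either $D\leq\{D'\}^0$, in which case a single IH call on $\{D'\}^0\in P_\eta(D')$ bounds $\mathbb J(D,\gamma)$ by $\alpha_{D'}\leq\mathbb J(D',\gamma')$, or $D$ is itself of type~$\Omega$ with $\{D\}^\gamma\leq\{D'\}^\gamma$ for every ordinal~$\gamma$; a first IH yields $\alpha_D:=\mathbb J(\{D\}^0,\gamma)\leq\alpha_{D'}$, Lemma~\ref{lem:Omega-nat-trans}(a) gives $\{D'\}^{\alpha_D}\leq\{D'\}^{\alpha_{D'}}$, and a second IH call bounds $\beta_D:=\mathbb J(\{D\}^{\alpha_D},\gamma)$ by $\beta_{D'}$, so clause~(iv) of Proposition~\ref{prop:clauses-J} delivers $\alpha_D+\beta_D\leq\alpha_{D'}+\beta_{D'}$.

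For part~(b), I apply Theorem~\ref{thm:guarded-ind} to $E$ with $D,\gamma$ as parameters, relying on the structural fact that when $E\neq 0$ the connected decomposition of $D+E$ is the concatenation of those of $D$ and~$E$, so that $D+E$ inherits the type of $E$ and the relevant clause of Proposition~\ref{prop:clauses-J} applies uniformly to both sides of the desired identity. The case $E=0$ is a direct calculation, $E=E'+1$ reduces to clause~(ii) and one IH call, and the type-$\Omega$ case exploits the identity $\{D+E\}^\gamma=D+\{E\}^\gamma$ from Definition~\ref{def:sep-vars} to turn clause~(iv) for $D+E$ into two IH applications on $\{E\}^0$ and $\{E\}^{\alpha}$. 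The delicate case is $E=\sum_{\beta<\lambda}E_\beta$ of type~$\omega$: writing $D=\sum_{\mu<\kappa}D_\mu$, the sum $D+E$ is indexed by $\kappa+\lambda$, and the clause-(iii) witness $f\colon\kappa+\lambda\to\delta+1$ must be defined on all of its domain, including the indices $\nu<\kappa$ coming from~$D$. For the direction LHS~$\Rightarrow$~RHS, definedness of the LHS supplies $f(\kappa)=\mathbb J(D,\gamma)=:\alpha$, and the IH on each $\sum_{\beta'<\beta}E_{\beta'}$ identifies $f(\kappa+\beta)$ with $\mathbb J(\sum_{\beta'<\beta}E_{\beta'},\alpha)$; the fact that $\{\kappa+\beta\mid\beta<\lambda\}$ is cofinal in $\kappa+\lambda$ then matches the two suprema. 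For the converse, with $\alpha:=\mathbb J(D,\gamma)$ defined, part~(a) applied to $\sum_{\mu<\nu}D_\mu\leq D$ supplies the values $f(\nu)$ for $\nu<\kappa$, and the IH extends $f$ to the remaining indices as before.

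The principal obstacle is the type-$\omega$ case of~(b): synthesising the clause-(iii) witness for $D+E$ requires $\mathbb J$ to be defined on every proper initial sum of~$D$, and this definedness is exactly what part~(a) must deliver, which dictates the order of the two arguments. A secondary technical point is that the type-$\omega$ sub-case of~(a), when the index $K$ is a successor, needs the sequence $\nu\mapsto\mathbb J(\sum_{\beta<\nu}D'_\beta,\gamma')$ to grow strictly, so a small auxiliary monotonicity claim — namely that adding a nonzero connected summand strictly increases the value of $\mathbb J$ — is folded into the same induction, with justification drawn directly from the recursive clauses of Proposition~\ref{prop:clauses-J}. Keeping the guards $\eta,\xi$ coherent across nested occurrences of $\mathbb J$ in a single induction step is absorbed by Lemma~\ref{lem:J-indep}.
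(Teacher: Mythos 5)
Your overall architecture mirrors the paper's proof — case analysis via Proposition~\ref{prop:sums}(b) and Lemma~\ref{lem:Omega-nat-trans}, and part~(b) by guarded induction on~$E$ using~(a) to supply values on the initial sums of~$D$ — but part~(a) has a genuine gap at exactly the point the paper flags as the main difficulty. Your induction predicate $Q(D')$, ``for every $D\leq D'$ and every $\delta'$ with $\mathbb J(D',\gamma')\simeq\delta'$ there is $\delta\leq\delta'$ with $\mathbb J(D,\gamma)\simeq\delta$'', is not primitive recursive: it contains an unbounded quantifier over all dilators~$D$, and the relation $D\leq D'$ itself asserts the existence of a natural transformation, which is a $\Sigma_1$ condition rather than a $\Delta_0$ one. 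Theorem~\ref{thm:guarded-ind} applies only to primitive recursive classes, so guarded induction cannot be run on your statement; Lemma~\ref{lem:J-indep} harmonises the guards $\eta,\xi$ but does nothing to repair the definability of the induction predicate. The paper resolves this by fixing a single witness for $D\leq D'$ and constructing, by primitive recursion along~$\omega$, a \emph{set}~$T$ of natural transformations closed under the decompositions needed in each induction step (its closure properties (i)--(iv) rest on Proposition~\ref{prop:sums}(b) and Lemma~\ref{lem:Omega-nat-trans}); the induction statement then quantifies only over witnesses in~$T$ and over ordinals below~$\eta$, which makes it primitive recursive, and the case analysis you sketch goes through relative to~$T$. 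Without this device (or an equivalent way of replacing $D\leq D'$ by a set-bounded witness condition), part~(a) — and hence its use inside the type-$\omega$ case of your part~(b) — is not justified in~$\atrs$.

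A secondary point: in the type-$\omega$ subcase of~(a) you do not need the auxiliary claim that adding a nonzero connected summand strictly increases~$\mathbb J$. If the index order~$K$ of~$D$ has a largest element, then since $\lambda$ is a limit you can pick $\nu<\lambda$ with $D\leq\sum_{\beta<\nu}D'_\beta$ and conclude by a single application of the induction hypothesis to this one predecessor of~$D'$; so the termwise argument and the extra monotonicity claim can be dropped. That is just as well, since the strict-increase statement is essentially Lemma~\ref{lem:J-properties}(e), which the paper derives only later \emph{from} the present lemma, and folding it into an induction whose statement is already not primitive recursive would compound the problem. Your part~(b) matches the paper's argument and is fine once~(a) is in place and the induction statement is phrased with fixed $\eta,\xi$ and quantification bounded by~$\eta$, as you indicate.
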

\begin{proof}
(a) We cannot immediately use induction on~$D'$, as the condition~\mbox{$D\leq D'$} is not primitive recursive. In order to resolve this issue, we fix~$D,D'$ and ordinals that validate~$\mathbb J(D',\gamma')\simeq^\eta_\xi\delta'$ (note that~$\eta$ occurs in~(iii) and~(iv) below). Using primitive recursion along~$\omega$ (cf.~the proof of Proposition~\ref{prop:pred-ind}), we construct a set~$T$ of natural transformations that contains a witness for~$D\leq D'$ and has the following closure properties, which rely on Proposition~\ref{prop:sums}(b) and Lemma~\ref{lem:Omega-nat-trans}:
\begin{enumerate}[label=(\roman*)]
\item If $T$ contains a natural transformation that witnesses~$E\leq F+1$, it contains a witness for~$E\leq F$ or a witness for~$E'\leq F$ with $E\cong E'+1$.
\item If $T$ contains a witness for~$E\leq\sum_{\alpha<\lambda}D_\alpha$ (with limit~$\lambda$ and connected~$D_\alpha$), it contains one for~$E\leq\sum_{\alpha<\nu}D_\alpha$ with $\nu<\lambda$ or we have~$E\cong\sum_{\alpha<\kappa}E_\alpha$ (with limit~$\kappa$ and connected~$E_\alpha$) and every~$\mu<\kappa$ admits a~$\nu<\lambda$ such that $T$ contains a witness for~$\sum_{\alpha<\mu}E_\alpha\leq\sum_{\alpha<\nu}D_\alpha$.
\item If $T$ contains a witness for~$E\leq F$ with~$F$ of type~$\Omega$, it contains a witness for~$E\leq\{F\}^0$ or the dilator~$E$ does also have type~$\Omega$ and our set~$T$ contains witnesses for~$\{E\}^\alpha\leq\{F\}^\alpha$ for all~$\alpha<\eta$.
\item If $E$ has type~$\Omega$ and occurs in~$T$ (i.\,e., if $T$ contains a witness for~$E_0\leq E_1$ where~$E=E_i$), then~$T$ has witnesses for~$\{E\}^\alpha\leq\{E\}^\beta$ for all~$\alpha\leq\beta<\eta$.
\end{enumerate}
We now use guarded induction on~$F$ to show that~$\mathbb J(F,\beta)\simeq^\eta_\xi\tau$ entails~$\mathbb J(E,\alpha)\simeq^\eta_\xi\sigma$ with~$\sigma\leq\tau$ whenever we have~$\alpha\leq\beta$ and~$T$ contains a witness for~$E\leq F$. An analogous induction is carried out in the proof of Lemma~1.4 from~\cite{paeppinghaus-bachmann}. To indicate the issues that are specific to our setting, we discuss the case where~$F$ has type~$\Omega$ and we have~$E\not\leq\{F\}^0$. Consider~$\tau=\pi+\rho$ with
\begin{equation*}
\mathbb J(\{F\}^0,\beta)\simeq^\eta_\xi\pi\quad\text{and}\quad\mathbb J(\{F\}^\pi,\beta)\simeq^\eta_\xi\rho.
\end{equation*}
By~(iii), our set~$T$ contains a witness for~$\{E\}^0\leq\{F\}^0$. The induction hypothesis thus yields a~$\chi\leq\pi$ with $\mathbb J(\{E\}^0,\alpha)\simeq^\eta_\xi\chi$. We also have~$\{E\}^\chi\leq\{E\}^\pi\leq\{F\}^\pi$ with witnesses in~$T$, due to~(iii) and~(iv). This yields a~$\rho'\leq\rho$ with $\mathbb J(\{E\}^\chi,\alpha)\simeq^\eta_\xi\rho'$. Since~$E\leq F$ does also entail
\begin{equation*}
\otp(E(\omega^{1+\eta}))\leq\otp(F(\omega^{1+\eta}))<\xi,
\end{equation*}
we finally get $\mathbb J(E,\alpha)\simeq^\eta_\xi\sigma$ with $\sigma=\chi+\rho'\leq\pi+\rho=\tau$.

(b) For fixed~$D$ and~$\gamma,\eta,\xi$, guarded induction on~$E$ with $\otp((D+E)(\omega^{1+\eta}))<\xi$ shows that $\mathbb J(D+E,\gamma)\simeq^\eta_\xi\delta$ holds precisely if there is an~$\alpha$ with $\mathbb J(D,\gamma)\simeq^\eta_\xi\alpha$ and~$\mathbb J(E,\alpha)\simeq^\eta_\xi\delta$ (with a bounded quantification over $\alpha,\delta<\eta$). As an example, we discuss the backward direction for the case where we have~$E\cong\sum_{\beta<\lambda}E_\beta$ (with a limit ordinal~$\lambda$ and connected~$E_\beta$). For the other cases, we refer to the proof of Lemma~1.4 in~\cite{paeppinghaus-bachmann}. By Proposition~\ref{prop:sums}(a), we get $D\cong\sum_{\beta<\kappa}D_\beta$ with connected~$D_\beta$ (where~$\kappa$ need not be limit). Let us write
\begin{equation*}
D+E=\textstyle\sum_{\beta<\kappa+\lambda}F_\beta\quad\text{for}\quad F_\beta=\begin{cases}
D_\beta & \text{if $\beta<\kappa$},\\
E_{\beta'} & \text{if $\beta=\kappa+\beta'$ with $\beta'<\lambda$}.
\end{cases}
\end{equation*}
We assume~$\mathbb J(D,\gamma)\simeq^\eta_\xi\alpha$ and~$\mathbb J(E,\alpha)\simeq^\eta_\xi\delta$. The latter provides an~$f:\lambda\to\delta+1$ with $\delta=\sup_{\nu<\lambda}f(\nu)$ and~$\mathbb J(\sum_{\beta<\nu}E_\beta,\alpha)\simeq^\eta_\xi f(\nu)$ for all~$\nu<\lambda$. By the induction hypothesis, we obtain
\begin{equation*}
\mathbb J\big(\textstyle\sum_{\beta<\kappa+\nu}F_\beta,\gamma\big)\simeq^\eta_\xi f(\nu).
\end{equation*}
For~$\mu<\kappa$, part~(a) yields values~$g(\mu)\leq\alpha=f(0)$ with $\mathbb J(\textstyle\sum_{\beta<\mu}F_\beta,\gamma)\simeq^\eta_\xi g(\mu)$. Define~$h:\kappa+\lambda\to\delta+1$ by~$h(\mu)=g(\mu)$ for~$\mu<\kappa$ and $h(\kappa+\nu)=f(\nu)$ for~$\nu<\lambda$. We then have $\delta=\sup_{\nu<\kappa+\lambda}h(\nu)$ and $\mathbb J(\textstyle\sum_{\beta<\nu}F_\beta,\gamma)\simeq^\eta_\xi h(\nu)$ for all~$\nu<\kappa+\lambda$. Since the condition $\otp((D+E)(\omega^{1+\eta}))<\xi$ from Definition~\ref{def:J} was explicitly assumed in the present induction, we finally get~$\mathbb J(D+E,\gamma)\simeq^\eta_\xi\delta$.
\end{proof}

The following corresponds to Lemmas~1.5 and~1.6 of~\cite{paeppinghaus-bachmann}.

\begin{lemma}[$\atrs$]\label{lem:J-properties}
(a) We have $\gamma\lesssim\mathbb J(D,\gamma)$.

(b) Given $1\leq D$, we get $\gamma+1\lesssim\mathbb J(D,\gamma)$.

(c) If we have $1\leq D$ and~$D$ has type~$\Omega$, we get $\mathbb J(\{D\}^{\gamma+1},\gamma)\lesssim\mathbb J(D,\gamma)$.

(d) For $D\neq 0$, we have $\gamma+1\lesssim\mathbb J(D,\gamma)$ or~$\mathbb J(D,\gamma)\simeq\gamma=0$.

(e) Given $\mathbb J(D+E,\gamma)\simeq\delta\neq 0$ with $E\neq 0$, we get $\mathbb J(D,\gamma)\simeq\delta'$ with $\delta'<\delta$.
\end{lemma}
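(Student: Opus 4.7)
The plan is to follow Päppinghaus's proofs of Lemmas~1.5 and~1.6 \cite{paeppinghaus-bachmann}, but carried out within the guarded framework of Theorem~\ref{thm:guarded-ind}. For a fixed triple $\gamma,\eta,\xi$, each of (a)--(d) can be phrased as a primitive recursive statement in~$D$ of the shape ``for the unique $\delta<\eta$ with $\mathbb J(D,\gamma)\simeq_\xi^\eta\delta$ (if any), $\delta$ satisfies the relevant inequality''; Lemma~\ref{lem:J-indep} then lets us drop the dependence on $\eta,\xi$ and pass to the formulation with~$\simeq$. The hypothesis $1\leq D$ appearing in~(b) and~(c) is itself primitive recursive in~$D$: by Proposition~\ref{prop:sums} we decompose $D$ into its connected summands and check whether any of them equals~$1$, using that $1\leq D'$ for connected~$D'$ forces $D'=1$.

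For~(a), I induct on~$D$ following the four clauses of Definition~\ref{def:J}: types~$0$ and~$1$ are direct, the type-$\omega$ clause uses $f(0)=\gamma$, and the type-$\Omega$ clause gives $\delta=\alpha+\mathbb J(\{D\}^\alpha,\gamma)\geq\gamma$ via the induction hypothesis on the predecessor~$\{D\}^\alpha$. For~(b), I again induct on~$D$, this time under the additional hypothesis $1\leq D$: type~$1$ follows from~(a); in the type-$\omega$ case, Proposition~\ref{prop:sums}(b) locates a connected summand $D_\beta = 1$ and Lemma~\ref{lem:J-nat-trans}(b) with~(a) gives $f(\beta+1)=\mathbb J(\sum_{\alpha<\beta}D_\alpha,\gamma)+1\geq\gamma+1$, whence $\delta\geq f(\beta+1)\geq\gamma+1$; in the type-$\Omega$ case, write $D=E+F$ with connected $F\neq 1$, so $1\leq D$ forces $1\leq E=\{D\}^0$ (since $F$ is connected and $\neq 1$), and the induction hypothesis yields $\alpha=\mathbb J(\{D\}^0,\gamma)\geq\gamma+1$, hence $\delta=\alpha+\beta\geq\gamma+1$. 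Part~(c) is then an immediate consequence of~(b): $\alpha\geq\gamma+1$, so Lemma~\ref{lem:Omega-nat-trans}(a) gives $\{D\}^{\gamma+1}\leq\{D\}^\alpha$ and Lemma~\ref{lem:J-nat-trans}(a) yields $\mathbb J(\{D\}^{\gamma+1},\gamma)\leq\mathbb J(\{D\}^\alpha,\gamma)\leq\alpha+\mathbb J(\{D\}^\alpha,\gamma)=\mathbb J(D,\gamma)$.

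For~(d), I use guarded induction on $D\neq 0$. If $1\leq D$, part~(b) gives $\gamma+1\leq\delta$. Otherwise no connected summand of~$D$ is~$1$. In the type-$\omega$ case, the induction hypothesis on each proper initial sum shows that either some $f(\nu)\geq\gamma+1$ (so $\delta\geq\gamma+1$) or every $f(\nu)$ equals $\gamma=0$ (so $\delta=\sup_\nu f(\nu) = 0 = \gamma$). In the type-$\Omega$ case $D=E+F$ with connected $F\neq 1$, if $E\neq 0$ the induction hypothesis on~$E$ (which also has no summand equal to~$1$) yields either $\alpha=\mathbb J(E,\gamma)\geq\gamma+1$ (giving $\delta\geq\alpha\geq\gamma+1$) or $\alpha=\gamma=0$, in which case $\{D\}^\alpha=\{D\}^0=E$ forces $\beta=\mathbb J(E,\gamma)=0$ and hence $\delta=0=\gamma$; if $E=0$ then $D=F$ is connected, so $\{F\}^0=0$ and $\alpha=\gamma$, the case $\gamma=0$ giving $\delta=0=\gamma$, while for $\gamma\geq 1$, (a) gives $\mathbb J(\{F\}^\gamma,\gamma)\geq\gamma\geq 1$, so $\delta=\gamma+\mathbb J(\{F\}^\gamma,\gamma)\geq\gamma+1$. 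Finally, (e) is immediate from Lemma~\ref{lem:J-nat-trans}(b): decomposing $\mathbb J(D+E,\gamma)\simeq\delta$ as $\mathbb J(D,\gamma)\simeq\alpha$ and $\mathbb J(E,\alpha)\simeq\delta$, part~(d) applied to $E\neq 0$ with input~$\alpha$ and output $\delta\neq 0$ excludes the alternative $\delta=\alpha=0$, so $\alpha+1\leq\delta$.

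The only real bookkeeping issue I expect is verifying, at each invocation of the induction hypothesis, that the guarded conditions $\delta'<\eta$ and $\otp(E(\omega^{1+\eta}))<\xi$ pass to the relevant predecessor~$E$. Both are automatic: the former from the arithmetic decomposition of~$\delta$ in Definition~\ref{def:J}, and the latter from Lemma~\ref{lem:sep-pred} together with the existence of the natural transformations $E\Rightarrow D$ that come with each clause (summand inclusions, $\{D\}^\alpha\Rightarrow D$, etc.), which monotonise $\otp$ of the image under $\omega^{1+\eta}$.
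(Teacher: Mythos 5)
Your argument is correct, and for (c) and (e) it coincides with the paper's proof (decompose via Proposition~\ref{prop:clauses-J}, get $1\leq\{D\}^0$, apply (b), Lemma~\ref{lem:Omega-nat-trans}(a) and Lemma~\ref{lem:J-nat-trans}(a); respectively combine Lemma~\ref{lem:J-nat-trans}(b) with (d)). Where you diverge is in (a), (b) and (d). For (a) and (b) you run full guarded inductions through all four type clauses, with the attendant bookkeeping: primitive recursiveness of the side condition $1\leq D$, locating a connected summand equal to $1$ via Proposition~\ref{prop:sums}(b), and an appeal to Lemma~\ref{lem:J-nat-trans}(b) inside the type-$\omega$ case. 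All of this is sound -- your auxiliary claims hold, e.g.\ $1\leq D'$ for connected $D'$ forces $D'\cong 1$ by the remark before Definition~\ref{def:import-coeff}, and $1\leq D$ is equivalent to $D(0)\neq\emptyset$, so it is certainly primitive recursive -- but it is avoidable: the paper gets (a) and (b) in one line from Lemma~\ref{lem:J-nat-trans}(a), since $0\leq D$ (resp.\ $1\leq D$) together with $\mathbb J(0,\gamma)\simeq\gamma$ (resp.\ $\mathbb J(1,\gamma)\simeq\gamma+1$) already yields $\gamma\lesssim\mathbb J(D,\gamma)$ (resp.\ $\gamma+1\lesssim\mathbb J(D,\gamma)$), no induction needed. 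Similarly, in (d) the paper's guarded induction (at fixed $\eta,\xi$, exactly as you set it up) avoids your case split on $1\leq D$ and on whether $\{D\}^0=0$: in the type-$\Omega$ step one simply notes $\gamma\leq\alpha,\beta$ by (a), so either $\gamma<\alpha$ or $0<\beta$ gives $\gamma+1\leq\delta$, and otherwise $\alpha=\gamma$, $\beta=0$ force $\alpha=\beta=\gamma=0$ and $\delta=0$. So your route buys nothing beyond the paper's except a workout of the sum decomposition; the paper's shortcut is the observation that monotonicity of $\mathbb J$ in the dilator argument, applied to the constant dilators $0$ and $1$, already carries (a) and (b), which then do the heavy lifting in (c)--(e). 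Your closing remark on the guard conditions is also essentially right, and matches how the paper handles them (the subsidiary clauses of Definition~\ref{def:J} carry the same $\eta,\xi$, and Lemma~\ref{lem:sep-pred} plus Lemma~\ref{lem:J-indep} take care of the rest).
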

\begin{proof}
To establish~(a) and~(b), one uses part~(a) of the previous lemma. This relies on the fact that we always have~$0\leq D$ as well as $\mathbb J(0,\gamma)\simeq\gamma$ and $\mathbb J(1,\gamma)\simeq\gamma+1$.

For~(c), we assume we have~$\mathbb J(D,\gamma)\simeq\delta$. By Proposition~\ref{prop:clauses-J} we get $\delta=\alpha+\beta$ with $\mathbb J(\{D\}^0,\gamma)\simeq\alpha$ and~$\mathbb J(\{D\}^\alpha,\gamma)\simeq\beta$. Lemma~\ref{lem:Omega-nat-trans} yields~$1\leq\{D\}^0$, so that we get $\gamma+1\leq\alpha$ by part~(b). Due to Lemma~\ref{lem:Omega-nat-trans}, we obtain $\{D\}^{\gamma+1}\leq\{D\}^\alpha$. Again by the previous lemma, we finally get~$\mathbb J(\{D\}^{\gamma+1},\gamma)\simeq\beta_0$ with $\beta_0\leq\beta\leq\delta$.

Concerning~(d), we note that $\gamma+1\lesssim\mathbb J(D,\gamma)$ is automatically true when~$\mathbb J(D,\gamma)$ is undefined. So assume ~$\mathbb J(D,\gamma)\simeq^\eta_\xi\delta$. By guarded induction on~$E\neq 0$, we show
\begin{equation*}
\mathbb J(E,\gamma)\simeq^\eta_\xi\delta'\quad\Rightarrow\quad\gamma+1\leq\delta'\text{ or }\delta'=\gamma=0.
\end{equation*}
When~$E$ has type~$1$, the claim follows from~(b). The induction step is straightforward for~$E$ of type~$\omega$. For~$E$ of type~$\Omega$, the antecedent of the desired implication yields~$\delta'=\alpha+\beta$ with $\mathbb J(\{D\}^0,\gamma)\simeq\alpha$ and~$\mathbb J(\{D\}^\alpha,\gamma)\simeq\beta$. In view of part~(a), we have~$\gamma\leq\alpha,\beta$. If we have $\gamma<\alpha$ or~$0<\beta$, we get $\gamma+1\leq\delta'$. Otherwise, we can conclude $\alpha=\beta=\gamma=0$ and hence~$\delta'=0$.

For~(e), use the previous lemma to get a~$\delta'$ with $\mathbb J(D,\gamma)\simeq\delta'$ and~$\mathbb J(E,\delta')\simeq\delta$. Given~$\delta\neq 0$, we must have~$\delta'<\delta$ by part~(d).
\end{proof}

We conclude this section with the metamathematical result that was promised above. In the following theorem, $\Pi^1_1$-comprehension is considered under the usual interpretation of second-order arithmetic in set theory. Our base theory~$\atrs$ (with the axiom of countability) proves that $\Pi^1_1$-comprehension holds precisely if every set is contained in some admissible set, i.\,e., in a transitive model of Kripke-Platek set theory (see Section~7 of~\cite{jaeger-admissibles} or Corollary~1.4.13 of~\cite{freund-thesis}).

\begin{theorem}[$\atrs$]\label{thm:Pi11CA-to-J}
If $\Pi^1_1$-comprehension holds, then~$\mathbb J$ is total, i.\,e., for all dilators~$D$ and ordinals~$\gamma$, there is an ordinal~$\delta$ with $\mathbb J(D,\gamma)\simeq\delta$.
\end{theorem}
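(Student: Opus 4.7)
The strategy is to use $\Pi^1_1$-comprehension in the stated form that every set is contained in an admissible set $A$, work inside $A$ where Kripke-Platek set theory is available, and carry out the recursive definition of $\mathbb J(D,\gamma)$ there using $\Sigma$-recursion and $\Sigma$-replacement.

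Fix $D$ and $\gamma$. Pick an admissible $A$ with $D,\gamma\in A$ and argue inside $A$. The first step is to choose ordinals $\eta,\xi\in A$ rich enough to serve as guards for Definition~\ref{def:J}: take $\eta$ to be a suitably large admissible ordinal of $A$ above $\gamma$ (admissible ordinals exist cofinally in~$A$, since $A$ itself is admissible and one can iterate admissibility internally), and set $\xi=\otp(D(\omega^{1+\eta}))$, which exists by axiom beta. The point of taking $\eta$ admissible is that it is closed under ordinal addition and under suprema of length $<\eta$ indexed by sets of $A$, which is exactly what the recursion will require.

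The core of the argument is a well-founded $\Sigma$-recursion inside $A$ on the ordinal rank $R(E)=\otp_\xi(E(\omega^{1+\eta}))$. We prove, by induction on $R(E)<\xi$, that for every dilator $E\in A$ with $R(E)<\xi$ and every ordinal $\beta<\eta$ there exists $\delta<\eta$ with $\mathbb J(E,\beta)\simeq^\eta_\xi\delta$. Lemma~\ref{lem:sep-pred} guarantees $R(E')<R(E)$ for every $E'\in P_\eta(E)$, so the recursion is well-founded. Case analysis on the type of $E$ follows Definition~\ref{def:J}: types $0$ and $1$ are immediate from the induction hypothesis. For type $\omega$, write $E\cong\sum_{\alpha<\lambda}E_\alpha$ via Proposition~\ref{prop:sums}(a); $\Sigma$-replacement in $A$ collects the induction-hypothesis witnesses into a function $f\colon\lambda\to\eta$, and admissibility of $\eta$ places $\delta=\sup_{\nu<\lambda}f(\nu)$ strictly below $\eta$. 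For type $\Omega$, apply the induction hypothesis first to $\{E\}^0$ to get $\alpha<\eta$, then (since $\alpha<\eta$ and $R(\{E\}^\alpha)<R(E)$ by Lemma~\ref{lem:sep-pred}) to $\{E\}^\alpha$ to get $\beta'<\eta$, and set $\delta=\alpha+\beta'$. Specialising to $E=D$ and $\beta=\gamma$ produces $\delta\in A$ with $\mathbb J(D,\gamma)\simeq^\eta_\xi\delta$, whence $\mathbb J(D,\gamma)\simeq\delta$ by Lemma~\ref{lem:J-indep}.

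The main obstacle, and the reason for the careful choice of $\eta$ at the outset, is keeping the candidate value $\delta$ strictly below $\eta$ throughout the recursion: in the $\Omega$ case one needs $\alpha+\beta'<\eta$, and in the $\omega$ case one needs $\sup_{\nu<\lambda}f(\nu)<\eta$ where $\lambda$ can itself be large. Both requirements are handled uniformly by taking $\eta$ admissible above $\gamma$ inside $A$: such an $\eta$ is a strong limit of $A$'s ordinal arithmetic, so addition and $<\eta$-suprema indexed by sets of $A$ cannot escape it, and by $\Sigma$-replacement the collection of dilators $E$ that ever appear in the recursion fits into a set of $A$ of rank well below $\eta$. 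With this choice the bookkeeping closes, completing the proof.
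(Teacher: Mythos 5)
Your overall strategy -- pass to an admissible set, recurse along the predecessor structure of dilators with an ordinal rank as guard, and use $\Sigma$-replacement to handle the type-$\omega$ case -- is the same as the paper's, but two claims that carry the bookkeeping are false as stated, and the second one is exactly the point the paper has to work to get around. First, admissible ordinals need not be cofinal in an admissible set $A$: the least admissible set containing $D,\gamma$ has no admissible ordinals above the ranks of its generators, and admissibility of $A$ does not allow you to ``iterate admissibility internally''. (This part is repairable: apply the hypothesis twice to get admissible sets $A_0\ni D,\gamma$ and $A\ni A_0$ and take $\eta=\mathrm{Ord}\cap A_0$.) Second, and more seriously, an admissible $\eta<\mathrm{Ord}\cap A$ is \emph{not} ``closed under suprema of length $<\eta$ indexed by sets of $A$'': $\eta$ may have cofinality $\omega$ in $A$, and $\Sigma$-replacement applied in $A$ only bounds the supremum $\sup_{\nu<\lambda}f(\nu)$ in the type-$\omega$ case below $\mathrm{Ord}\cap A$, not below $\eta$. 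To keep it below $\eta$ you must apply replacement over an admissible structure of height $\eta$; but your induction statement carries the parameters $\eta$ and $\xi$, which are not elements of that structure, so its graph is not a $\Sigma$-class there and the replacement you need is not available as written. Your closing remark that the dilators occurring in the recursion form a set of rank ``well below $\eta$'' does not help either (they include $\{D\}^\alpha$ for cofinally many $\alpha<\eta$, and in any case the values of $\mathbb J$, not the ranks of the inputs, are what must be bounded).

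The paper resolves precisely this difficulty by \emph{not} fixing a guard $\eta<\mathrm{Ord}\cap\mathbb A$ in advance: it takes the guard to be $\Omega=\mathrm{Ord}\cap\mathbb A$ itself, via $P_\Omega(D)$, and proves the key equivalence $(E,\beta,\delta)\in\mathbb J\restriction\mathbb A\Leftrightarrow\mathbb J(E,\beta)\simeq\delta$ by choosing the auxiliary parameters per value ($\eta=\delta+1$ and $\xi=\otp(E(\omega^{1+\eta}))+1$, using Lemma~\ref{lem:J-indep} and the fact that an admissible set contains the order types of its well orders). This makes the restriction of $\mathbb J$ to $\mathbb A$ a $\Sigma$-class over $\mathbb A$, so that $\Sigma$-replacement \emph{in $\mathbb A$} collects the function $f$ and keeps the supremum below $\mathrm{Ord}\cap\mathbb A$; the type-$\Omega$ case then only needs closure of $\mathbb A$ under ordinal addition. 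Your proposal omits this step and substitutes the incorrect closure claim for it, so there is a genuine gap. Two smaller points: with your choice $\xi=\otp(D(\omega^{1+\eta}))$ the strict inequality required by Definition~\ref{def:J} already fails for $D$ itself (take $\xi=\otp(D(\omega^{1+\eta}))+1$), and the induction ``over all dilators $E\in A$ of rank $<\xi$'' should be run in the ambient theory via the guarded induction principle (Theorem~\ref{thm:guarded-ind}), since being a dilator is $\Pi^1_1$ and not a class for which foundation inside $A$ is available.
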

\begin{proof}
Given $\Pi^1_1$-comprehension, it suffices to establish the conclusion for~$D$ and~$\gamma$ that lie in an admissible set~$\mathbb A\ni\omega$ (see the paragraph before the theorem). Consider
\begin{equation*}
\mathbb J\restriction\mathbb A=\big\{(D,\gamma,\delta)\in\mathbb A^3\,\big|\,\mathbb J(D,\gamma)\simeq^\eta_\xi\delta\text{ for some }\eta,\xi\in\mathbb A\big\}.
\end{equation*}
We recall that primitive recursive set functions are total and $\Sigma$-definable in admissible sets (with a single $\Sigma$-definition over the language~$\{\in\}$; see Lemma~2.4.1 of~\cite{freund-thesis} for an explicit reference for this known result). Hence $\mathbb J\restriction\mathbb A$ is a $\Sigma$-class from the viewpoint of~$\mathbb A$ (but a set in the ambient universe). As a crucial preparation, we show that
\begin{equation*}
(D,\gamma,\delta)\in\mathbb J\restriction\mathbb A\quad\Leftrightarrow\quad\mathbb J(D,\gamma)\simeq\delta
\end{equation*}
holds for~$D,\gamma,\delta\in\mathbb A$ such that $D$ is a dilator (in the sense of the universe). The forward direction is immediate. For the backward direction, we note that the right side yields~$\mathbb J(D,\gamma)\simeq^\eta_\xi\delta$ with $\eta=\delta+1$ and $\xi=\otp(D(\omega^{1+\eta}))+1$, by Lemma~\ref{lem:J-indep}. Given that $\mathbb A$ contains~$D$ and $\delta$, it also contains~$\eta$ and $D(\omega^{1+\eta})=\overline D(\omega^{1+\eta})$, the latter because~$\overline D:\lo\to\lo$ is primitive recursive with parameter~$D:\nat\to\lo$ (see~Definition~\ref{def:dil-extend} and the paragraph that follows it). As $D(\omega^{1+\eta})\in\mathbb A$ is a well order in the sense of the universe, its order type also lies in~$\mathbb A$ (see Theorem~4.6 of~\cite{jaeger-admissibles}). So we finally get~$\xi\in\mathbb A$, which yields $(D,\gamma,\delta)\in\mathbb J\restriction\mathbb A$.

We now use guarded induction on dilators~$D\in\mathbb A$ to show that any~$\gamma\in\mathbb A$ admits a~$\delta\in\mathbb A$ with $\mathbb J(D,\gamma)\simeq\delta$. The latter condition is primitive recursive due to the equivalence above. Note that the induction is guarded because we will only consider predecessors from~$P_\Omega(D)$, where $\Omega$ is the intersection of~$\mathbb A$ with the class of ordinals. Let us point out that $D\in\mathbb A$ entails~$P_\Omega(D)\subseteq\mathbb A$ (see Definition~\ref{def:dil-preds}), as the construction of predecessors is primitive recursive. So in the induction step for~$D,\gamma\in\mathbb A$, the induction hypothesis says that each~$D'\in P_\Omega(D)$ admits a~$\delta\in\mathbb A$ with $\mathbb J(D',\gamma)\simeq\delta$. The cases where $D$ has type~$0$ or~$1$ are immediate by parts~(i) and~(ii) of Proposition~\ref{prop:clauses-J}.

Now consider~$D\cong\sum_{\alpha<\lambda}D_\alpha$ with limit~$\lambda$ and connected~$D_\alpha$. We note that $\lambda$ is primitive recursive in~$D$ and hence contained in~$\mathbb A$. By the induction hypothesis, each $\nu<\lambda$ admits a~$\delta'\in\mathbb A$ with $\mathbb J(\sum_{\alpha<\nu}D_\alpha,\gamma)\simeq\delta'$. As we have seen, the latter is equivalent to~$(\sum_{\alpha<\nu}D_\alpha,\gamma,\delta')\in\mathbb J\restriction\mathbb A$, so that we are concerned with a $\Sigma$-property from the viewpoint of~$\mathbb A$. Let us also note that the~$\delta'$ are unique by Proposition~\ref{prop:J-partial-fct}. As Kripke-Platek set theory proves $\Sigma$-replacement (see Theorem~I.4.6 of~\cite{barwise-admissible}), we get a function~$f\in\mathbb A$ with~$\mathbb J(\sum_{\alpha<\nu}D_\alpha,\gamma)\simeq f(\nu)$ for~$\nu<\lambda$. Put $\delta=\sup_{\nu<\lambda}f(\nu)$ and consider~$f$ as a function from~$\lambda$ into~$\delta+1$. By part~(iii) of Proposition~\ref{prop:clauses-J}, we obtain $\mathbb J(D,\gamma)\simeq\delta\in\mathbb A$, as required.

Finally, we assume that~$D$ has type~$\Omega$. By the induction hypothesis, we get $\mathbb J(\{D\}^0,\gamma)\simeq\alpha$ and then $\mathbb J(\{D\}^\alpha,\gamma)\simeq\beta$ with $\alpha,\beta\in\mathbb A$. Part~(iv) of Proposition~\ref{prop:clauses-J} tells us that $\delta=\alpha+\beta\in\mathbb A$ validates $\mathbb J(D,\gamma)\simeq\delta$.
\end{proof}

\section{Recursive Clauses for Bachmann-Howard Fixed Points}\label{sect:Bachmann-rec}

As mentioned in the introduction, the axiom of $\Pi^1_1$-comprehension is characterized by a map~$D\mapsto\psi D$ that transforms a dilator into an ordinal. The orders~$\psi D$ were originally defined by a simple explicit construction. In this section, we show that~$\psi D$ can also be defined by recursion over the dilator~$D$. While this definition is more complicated, it facilitates comparisons with other recursive~constructions.

The original construction of~$\psi$ (see Definition~1.4 of~\cite{freund-rathjen-iterated-Pi11}) yields more general orders~$\psi_\nu D$, which involve iterations of collapsing along an infinite ordinal~$\nu$. In the special case of $\psi D=\psi_1 D$, the following definition is simpler but equivalent over~$\prs$ (as one readily derives from Lemma~4.2 of~\cite{uftring-inverse-goodstein}). 

\begin{definition}\label{def:psi-fp}
An ordinal~$\Omega$ is called a $\psi$-fixed point of a predilator~$D$ if there is an embedding~$\pi:\Omega\to D(\Omega)$ such that all $\alpha,\beta<\Omega$ and all $\tau\in D(\Omega)$ validate
\begin{align}
\alpha\in\supp_\Omega(\pi(\beta))\quad&\Rightarrow\quad\alpha<\beta,\label{psi-fp1}\\
\pi(\alpha)<\tau\text{ for all }\alpha\in\supp_\Omega(\tau)\quad&\Rightarrow\quad\tau\in\rng(\pi).\label{psi-fp2}
\end{align}
In view of the following lemma, we write $\psi D$ for the $\psi$-fixed point of~$D$, if it exists.
\end{definition}

The following is an immediate consequence of Proposition~2.1 from~\cite{freund-rathjen-iterated-Pi11}. The latter also shows that the map~$\pi$ in the previous definition is uniquely determined.

\begin{lemma}[$\prs$]\label{lem:psi-unique}
Each dilator has at most one~$\psi$-fixed point.
\end{lemma}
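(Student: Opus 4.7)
The plan is to establish uniqueness by showing that if $\Omega_1\leq\Omega_2$ are both $\psi$-fixed points of $D$, witnessed by embeddings $\pi_i:\Omega_i\to D(\Omega_i)$, then $\Omega_1=\Omega_2$. Writing $\iota:\Omega_1\hookrightarrow\Omega_2$ for the inclusion, the core step will be to prove by transfinite induction on $\alpha<\Omega_1$ the identity
\[
D(\iota)(\pi_1(\alpha))=\pi_2(\alpha).
\]
Once this is available, I would derive $\Omega_1=\Omega_2$ as follows. If $\Omega_1<\Omega_2$, then $\pi_2(\Omega_1)\in D(\Omega_2)$ has support contained in $\Omega_1=\rng(\iota)$ by \eqref{psi-fp1}, so the support condition applied to $\overline D$ yields $\pi_2(\Omega_1)=D(\iota)(\tau)$ for some $\tau\in D(\Omega_1)$; the argument of the induction step below then forces $\tau=\pi_1(\beta)$ for some $\beta<\Omega_1$, whence $\pi_2(\Omega_1)=D(\iota)(\pi_1(\beta))=\pi_2(\beta)$ contradicts injectivity of $\pi_2$.

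For the induction, suppose the identity holds for all $\gamma<\alpha$. First I would show $D(\iota)(\pi_1(\alpha))\in\rng(\pi_2)$: by \eqref{psi-fp1} for $\pi_1$ together with naturality of $\supp$, its support in $\Omega_2$ lies in $\alpha$; for any $\gamma$ in that support, the induction hypothesis and order-preservation of $\pi_1$ and $D(\iota)$ give $\pi_2(\gamma)=D(\iota)(\pi_1(\gamma))<D(\iota)(\pi_1(\alpha))$, so \eqref{psi-fp2} for $\pi_2$ provides $\beta<\Omega_2$ with $\pi_2(\beta)=D(\iota)(\pi_1(\alpha))$. By a mirror-image argument, \eqref{psi-fp1} for $\pi_2$ factors $\pi_2(\alpha)=D(\iota)(\tau)$ through an element of $D(\Omega_1)$, after which the induction hypothesis together with \eqref{psi-fp2} for $\pi_1$ yields $\beta'<\Omega_1$ with $\pi_2(\alpha)=D(\iota)(\pi_1(\beta'))$.

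A short case distinction then forces $\beta=\alpha$ and closes the induction. If $\beta<\alpha$, the induction hypothesis gives $\pi_2(\beta)=D(\iota)(\pi_1(\beta))$, so injectivity of $D(\iota)$ and $\pi_1$ forces $\alpha=\beta$, a contradiction. If $\beta>\alpha$, then $\pi_2(\alpha)<\pi_2(\beta)=D(\iota)(\pi_1(\alpha))$, so the representation $\pi_2(\alpha)=D(\iota)(\pi_1(\beta'))$ yields $\pi_1(\beta')<\pi_1(\alpha)$ and hence $\beta'<\alpha$; but then the induction hypothesis gives $\pi_2(\beta')=\pi_2(\alpha)$, again a contradiction. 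The main obstacle is purely organizational: one has to keep conditions \eqref{psi-fp1} and \eqref{psi-fp2} from \emph{both} fixed points in play simultaneously, and the Cartesianness of the natural transformation induced by $\iota$ (i.e., naturality of $\supp$) must be used to transport support bounds and factorizations back and forth between $D(\Omega_1)$ and $D(\Omega_2)$. Since the induction predicate is $\Delta_0$ in the data $D,\pi_1,\pi_2,\iota$, everything goes through in $\prs$.
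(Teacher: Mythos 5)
Your argument is correct, but it is organised differently from the paper: the paper does not prove the lemma at all, it simply quotes Proposition~2.1 of the cited work of Freund and Rathjen, which yields uniqueness of the fixed point and of the embedding $\pi$ as an immediate consequence. What you give is a self-contained proof of essentially the fact that citation encapsulates: assuming $\Omega_1\leq\Omega_2$ with witnessing embeddings $\pi_1,\pi_2$, you show $\overline D(\iota)\circ\pi_1=\pi_2\restriction\Omega_1$ by transfinite induction, using \eqref{psi-fp1} plus naturality of the supports to bound the support of the relevant elements, the support condition for $\overline D$ (Definition~\ref{def:dil-extend} and the remark after it) to factor elements through $D(\Omega_1)$, and \eqref{psi-fp2} of each fixed point to land in the range of the other embedding; the case distinction on $\beta$ correctly rules out $\beta\neq\alpha$, and the final step showing $\Omega_1<\Omega_2$ is impossible is sound, since $\pi_2(\Omega_1)$ would have to equal $\pi_2(\beta)$ for some $\beta<\Omega_1$, contradicting injectivity. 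Two small points are worth recording: your induction in fact gives slightly more than the lemma states, namely uniqueness of the embedding $\pi$ as well (take $\Omega_1=\Omega_2$), which is exactly the extra information the paper extracts from the cited proposition and uses later in the proof of Proposition~\ref{prop:psi-limit-sum}; and your closing remark about formalisation is right for the reason you give -- the graph of $\overline D(\iota)$ on the set $\overline D(\Omega_1)$ is available by primitive recursion, so the induction predicate is primitive recursive in the parameters and the transfinite induction is licensed in $\prs$ by foundation together with $\Delta_0$-separation.
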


The existence of $\psi$-fixed points for all dilators is equivalent to $\Pi^1_1$-comprehension over $\atrs$. A finer picture is established in~\cite{freund-rathjen-iterated-Pi11}: Computably in~$D$, one can construct a linear order that validates the defining conditions for~$\psi D$. The principle of $\Pi^1_1$-comprehension is only needed to show that this order is well-founded (and hence isomorphic to an ordinal by axiom beta). In view of Lemma~\ref{lem:otp-restr}, we can test whether an ordinal is isomorphic to the computable representation of~$\psi D$ that we have just mentioned. This gives the following result, which will be important later.

\begin{lemma}[$\prs$]\label{lem:psi-prim-rec}
The class
\begin{equation*}
\{(D,\gamma)\,|\,\text{``$D$ is a predilator with $\psi$-fixed point~$\gamma$"}\}
\end{equation*}
is primitive recursive in the sense of Jensen and Karp (cf.~Section~\ref{sect:prs}).
\end{lemma}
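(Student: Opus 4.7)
My plan is to combine three ingredients already isolated in the paper: the computable representation of $\psi D$ from~\cite{freund-rathjen-iterated-Pi11}, the truncated order type function $\otp_\xi$ from Lemma~\ref{lem:otp-restr}, and the uniqueness statement from Lemma~\ref{lem:psi-unique}.

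First I would invoke the construction of~\cite{freund-rathjen-iterated-Pi11} to obtain a primitive recursive set function $D\mapsto L(D)$ that produces, for each predilator $D$, a linear order $L(D)$ validating the defining conditions~(\ref{psi-fp1}) and~(\ref{psi-fp2}). The key feature I need to record is that $L(D)$ is well-founded if and only if $D$ admits a $\psi$-fixed point, in which case $\otp(L(D))=\psi D$. Since the construction of $L(D)$ from $D$ is spelled out by $\Delta_0$-recursions on the trace of $D$ (guarded by the relevant supports), $L$ is a primitive recursive set function, and the predicate ``$D$ is a predilator'' is itself primitive recursive, as the functoriality, monotonicity and support clauses from Definition~\ref{def:predil} are $\Delta_0$ once the defining function symbols of $D$ are read off.

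Next I would argue that the class in question is given by the primitive recursive formula
\begin{equation*}
D\text{ is a predilator},\quad \gamma\text{ is an ordinal},\quad\text{and}\quad\otp_{\gamma+1}(L(D))=\gamma.
\end{equation*}
By Lemma~\ref{lem:otp-restr}, the last clause holds precisely when $L(D)$ is a well order isomorphic to the ordinal $\gamma$, which by the cited characterisation is equivalent to $\gamma=\psi D$. Uniqueness of the $\psi$-fixed point (Lemma~\ref{lem:psi-unique}) then guarantees that no other value of $\gamma$ can satisfy the displayed condition when $\psi D$ exists, so the displayed formula correctly defines the class. Each conjunct is primitive recursive: the predicate ``$\gamma$ is an ordinal'' is $\Delta_0$, and both $L$ and $\otp_\xi$ are primitive recursive set functions.

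The main obstacle I anticipate is making precise the claim that the order $L(D)$ of~\cite{freund-rathjen-iterated-Pi11} really is primitive recursive in $D$ rather than merely computable; this requires tracing the construction there and checking that its elements are obtained by a $\Delta_0$-recursion along $\tr(D)$ rather than by an existential quantifier over well-foundedness. Once that bookkeeping is in place, together with the corresponding verification for the predilator predicate, the argument reduces to the short combination above.
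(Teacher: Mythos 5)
Your proposal matches the paper's own argument: the paper likewise relies on the computable (hence primitive recursive) term representation of $\psi D$ from \cite{freund-rathjen-iterated-Pi11} together with Lemma~\ref{lem:otp-restr} to test, below the bound $\gamma+1$, whether a given ordinal is isomorphic to that representation, with uniqueness of the fixed point supplied by Proposition~2.1 of the cited paper (Lemma~\ref{lem:psi-unique}). The only caveat you raise -- verifying that the representation is primitive recursive in $D$ rather than merely computable -- is exactly the level of detail the paper itself leaves to the citation, so there is no substantive difference.
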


In our recursive characterization of $\psi$-fixed points, we will need to consider the following variants of a given dilator.

\begin{definition}\label{def:D-gamma}
For a dilator~$D$ and an ordinal~$\gamma$, we define $D^\gamma$ as the dilator that is given by~$D^\gamma(\alpha)=D(\gamma+\alpha)$, where the action on morphisms and the support functions are explained as in Definition~\ref{def:sep-vars} (but without the condition on~$i(\sigma)$). We write $D[\gamma]$ for~$D^\gamma$ when we wish to save subscripts.
\end{definition}

Let us note that $D^0$ coincides with~$D$.

\begin{remark}
One can obtain a recursive characterization of $\psi D^\gamma$ by combining results from the literature. Write $\nu(D,\gamma)$ for the length of the increasing Goodstein sequence for $D$ with start value $\gamma$, as considered by Abrusci, Girard and van de Wiele~\cite{increasing-goodstein}. It was suggested by Weiermann that increasing Goodstein sequences are connected to Bachmann-Howard fixed points, so that one obtains an equivalence with $\Pi^1_1$-comprehension. This was confirmed by Uftring~\cite{uftring-inverse-goodstein}, who proved the remarkable equation $\nu(D,0)=\psi D$. We get
\begin{equation*}
\nu(D,\gamma)=\gamma+\nu(D^\gamma,0) =\gamma+\psi D^\gamma.
\end{equation*}
Here the first equality holds by~\cite{increasing-goodstein}, which also provides a definition of $\nu(D,\gamma)$ by recursion over the dilator~$D$, simultaneously for all~$\gamma$. While this gives recursive clauses for~$\psi D^\gamma$, these are obtained in a somewhat indirect way. In the following, we give a direct proof and check that it goes through in our base theory~$\prs$.
\end{remark}

We first determine the~$\psi$-fixed points of the most simple dilators.

\begin{lemma}[$\prs$]\label{lem:psi-constant}
For a constant dilator~$D=\alpha$ and any~$\gamma$, we have $\psi D^\gamma=\alpha$.
\end{lemma}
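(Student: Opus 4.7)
The plan is to show that $\alpha$ itself, together with the identity map, witnesses Definition~\ref{def:psi-fp} for $D^\gamma$; uniqueness then comes for free from Lemma~\ref{lem:psi-unique}.

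First I would unpack what $D^\gamma$ looks like when $D$ is the constant dilator with value $\alpha$. By Definition~\ref{def:D-gamma} we have $D^\gamma(X)=D(\gamma+X)=\alpha$ for every linear order~$X$, and all transition maps are the identity on $\alpha$. From the characterisation of $\supp_n$ as the smallest $a\subseteq n$ with $\sigma\in\rng(D(\en_a^n))$ (as discussed after Definition~\ref{def:predil}), and the fact that every $D(\en_a^n)$ is the identity on $\alpha$, it follows that $\supp_n(\sigma)=\emptyset$ for all $\sigma\in\alpha$. Hence $\tr(D^\gamma)=\{(\sigma,0)\mid\sigma<\alpha\}$, and the extension furnished by Definition~\ref{def:dil-extend} satisfies $\overline{D^\gamma}(X)\cong\alpha$ for every~$X$, with $\overline\supp_X(\sigma,\emptyset)=\emptyset$.

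Next I would verify the two fixed-point conditions using the identification $\pi\colon\alpha\to\overline{D^\gamma}(\alpha)$ given by $\pi(\beta)=(\beta,\emptyset)$. Condition~(\ref{psi-fp1}) reads ``$\alpha'\in\supp_\alpha(\pi(\beta))\Rightarrow\alpha'<\beta$'', and since $\supp_\alpha(\pi(\beta))=\emptyset$ this is vacuously true. Condition~(\ref{psi-fp2}) requires that any $\tau\in\overline{D^\gamma}(\alpha)$ whose support-predecessors are all $<\tau$ lies in $\rng(\pi)$; since $\supp_\alpha(\tau)=\emptyset$ the hypothesis is automatic, and $\rng(\pi)$ is all of $\overline{D^\gamma}(\alpha)$, so the conclusion holds. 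Thus $\alpha$ is a $\psi$-fixed point of $D^\gamma$, and by Lemma~\ref{lem:psi-unique} it is the unique one, so $\psi D^\gamma=\alpha$.

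I do not foresee any real obstacle: the only subtlety is being careful that the support of every element of a constant dilator is empty (so that both premises become vacuous), and this is essentially immediate from the support condition in Definition~\ref{def:predil} together with the fact that the transition maps are identities. Everything uses only primitive recursive manipulations and the definition of $\overline{D^\gamma}$, so the argument formalises in $\prs$ as required.
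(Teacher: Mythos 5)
Your proof is correct and follows essentially the same route as the paper: the key point in both is that a constant dilator has empty supports, which trivialises condition~(\ref{psi-fp1}) and the premise of~(\ref{psi-fp2}), forcing the embedding into $D^\gamma(\cdot)=\alpha$ to be a surjection. The only cosmetic difference is direction: you verify that $\alpha$ with the identity is a $\psi$-fixed point and invoke Lemma~\ref{lem:psi-unique}, while the paper argues directly that the embedding attached to any $\psi$-fixed point must be onto $\alpha$.
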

\begin{proof}
The dilator~$D^\gamma$ is also constant with value~$\alpha$. Its support sets are thus empty (by naturality). This trivializes~(\ref{psi-fp1}) and the premise of~(\ref{psi-fp2}). By the conclusion of the latter, our embedding~$\pi:\psi D^\gamma\to D^\gamma(\psi D^\gamma)=\alpha$ must be surjective.
\end{proof}

An arbitrary dilator~$D$ can be uniquely written as a sum~$\sum_{\alpha<\mu}D_\alpha$ of connected dilators~$D_\alpha$ (see~Definition~\ref{def:dil-types}). When~$\mu$ is a successor, $D$ has type~$1$ or~$\Omega$. In both these cases, our recursion relies on the following result (cf.~Lemma~\ref{lem:J-nat-trans}).

\begin{proposition}[$\prs$]\label{prop:psi-sum}
For all dilators~$D,E$ and any ordinal~$\gamma$, we have
\begin{equation*}
\psi(D+E)^\gamma=\psi D^\gamma+\psi E^\delta\quad\text{with}\quad\delta=\gamma+\psi D^\gamma.
\end{equation*}
More precisely, if the $\psi$-fixed points of $D^\gamma$ and $E^\delta$ exist, then their sum provides the $\psi$-fixed point of~$(D+E)^\gamma$. 
\end{proposition}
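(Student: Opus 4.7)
The plan is to exhibit an embedding witnessing that $\alpha := \psi D^\gamma + \psi E^\delta$ is the $\psi$-fixed point of $(D+E)^\gamma$; uniqueness is then automatic by Lemma~\ref{lem:psi-unique}. Write $\alpha_0 = \psi D^\gamma$, $\alpha_1 = \psi E^\delta$, $\alpha = \alpha_0 + \alpha_1$, and let $\pi_0 : \alpha_0 \to D^\gamma(\alpha_0) = D(\delta)$ and $\pi_1 : \alpha_1 \to E^\delta(\alpha_1) = E(\gamma + \alpha)$ be the witnessing embeddings. Define $\pi : \alpha \to (D+E)^\gamma(\alpha) = (D+E)(\gamma + \alpha)$ by
\[\pi(\beta) = \begin{cases} (0,\, D(\iota)(\pi_0(\beta))) & \text{if } \beta < \alpha_0, \\ (1,\, \pi_1(\epsilon)) & \text{if } \beta = \alpha_0 + \epsilon \text{ with } \epsilon < \alpha_1, \end{cases}\]
where $\iota : \delta \hookrightarrow \gamma + \alpha$ is the inclusion. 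That $\pi$ is an order embedding follows because $\pi_0$, $\pi_1$ and $D(\iota)$ all preserve strict order and because summands of $D + E$ are ordered lexicographically.

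To verify \eqref{psi-fp1}, recall that the $(D+E)^\gamma$-support of $\pi(\beta)$ in $\alpha$ consists of those $x < \alpha$ such that $\gamma + x$ appears in the $(D+E)$-support of $\pi(\beta)$. For $\beta < \alpha_0$ these $x$ coincide with the elements of $\supp^{D^\gamma}(\pi_0(\beta)) \subseteq \alpha_0$, which are below $\beta$ by the $\psi$-property of $\pi_0$. For $\beta = \alpha_0 + \epsilon$, elements of $\supp^E(\pi_1(\epsilon))$ strictly below $\delta$ contribute indices $x < \alpha_0 \leq \beta$, while elements of the form $\delta + y \geq \delta$ produce $x = \alpha_0 + y$ with $y < \epsilon$ by the $\psi$-property of $\pi_1$, again yielding $x < \beta$.

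For \eqref{psi-fp2}, let $\tau \in (D+E)(\gamma + \alpha)$ satisfy $\pi(x) < \tau$ for every $x$ in its $(D+E)^\gamma$-support, and split by $\tau = (i, \sigma)$. If $i = 0$ and some $\gamma + x^* \in \supp^D(\sigma)$ had $x^* \geq \alpha_0$, then $\pi(x^*) = (1, \cdot)$ would have to lie below $\tau = (0, \sigma)$, which is impossible; hence $\supp^D(\sigma) \subseteq \delta$, and $\sigma = D(\iota)(\sigma_0)$ for a unique $\sigma_0 \in D^\gamma(\alpha_0)$. The embedding $D(\iota)$ then transfers the hypothesis to $\pi_0(x) < \sigma_0$ for every $x \in \supp^{D^\gamma}(\sigma_0)$, and \eqref{psi-fp2} applied to $D^\gamma$ yields $\sigma_0 = \pi_0(x_0)$ with $x_0 < \alpha_0$, whence $\tau = \pi(x_0)$. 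If $i = 1$, every $y \in \supp^{E^\delta}(\sigma)$ satisfies $\delta + y \in \supp^E(\sigma)$, so $\alpha_0 + y$ lies in the $(D+E)^\gamma$-support of $\tau$ and the hypothesis forces $\pi_1(y) < \sigma$; \eqref{psi-fp2} applied to $E^\delta$ then supplies $\epsilon < \alpha_1$ with $\sigma = \pi_1(\epsilon)$, so $\tau = \pi(\alpha_0 + \epsilon)$.

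The main bookkeeping obstacle will be tracking the support functions through the shifts by $\gamma$ and $\delta$, and relating $\supp^{(D+E)^\gamma}(\tau) \subseteq \alpha$ to $\supp^D(\sigma)$ and $\supp^E(\sigma)$ inside $\gamma + \alpha$. The key structural point is that a $D$-support element of $\sigma$ situated above $\delta$ is incompatible with $\tau$ lying in the $D$-summand, because $\pi$ sends ordinals above $\alpha_0$ into the $E$-summand; this is what forces the dichotomy used in Case $i = 0$ and ensures the decomposition of $\sigma$ through $D(\iota)$.
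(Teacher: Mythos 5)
Your proof is correct and follows essentially the same route as the paper: the same glued embedding $\pi$ built from $\pi_0$, $D(\iota)$ and $\pi_1$, the same translation of supports through the shifts by $\gamma$ and $\delta$, and the same case analysis for conditions (\ref{psi-fp1}) and (\ref{psi-fp2}), including the key observation that a support element above $\alpha_0$ is incompatible with $\tau$ lying in the $D$-summand. The bookkeeping you flag at the end is exactly what the paper's proof spells out, and your sketch of it is already adequate.
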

\begin{proof}
The given $\psi$-fixed points come with embeddings
\begin{equation*}
\pi_D:\psi D^\gamma\to D^\gamma(\psi D^\gamma)\quad\text{and}\quad\pi_E:\psi E^\delta\to E^\delta(\psi E^\delta).
\end{equation*}
We combine these into a suitable embedding
\begin{equation*}
\pi:\psi D^\gamma+\psi E^\delta\to (D+E)^\gamma(\underbrace{\psi D^\gamma+\psi E^\delta}_{=:\eta})=D^\gamma(\eta)+E^\delta(\psi E^\delta).
\end{equation*}
For the inclusion $\iota:\psi D^\gamma\hookrightarrow\eta$, we set
\begin{equation*}
\pi(\alpha)=\begin{cases}
D^\gamma(\iota)(\pi_D(\alpha)) & \text{if }\alpha<\psi D^\gamma,\\
D^\gamma(\eta)+\pi_E(\alpha') & \text{if }\alpha=\psi D^\gamma+\alpha'.
\end{cases}
\end{equation*}
Write $\supp^{F,\gamma}_\alpha:F^\gamma(\alpha)\to[\alpha]^{<\omega}$ for the support functions of a dilator~$F^\gamma$. We have
\begin{equation*}
\supp^{D+E,\gamma}_\eta(\tau)=\begin{cases}
\supp^{D,\gamma}_\eta(\tau) & \text{if }\tau<D^\gamma(\eta),\\[.5ex]
\supp^{E,\gamma}_\eta(\tau') & \text{if }\tau=D^\gamma(\eta)+\tau'.
\end{cases}
\end{equation*}
In view of $\gamma+\eta=\delta+\psi E[\delta]$ (recall $E[\delta]:=E^\delta$), we also get
\begin{equation*}
\psi D^\gamma+\alpha'\in\supp^{E,\gamma}_\eta(\tau')\quad\Leftrightarrow\quad\alpha'\in\supp^{E,\delta}_{\psi E[\delta]}(\tau').
\end{equation*}
We now verify the conditions from Definition~\ref{def:psi-fp}. The first of these reads
\begin{equation*}
\alpha\in\supp^{D+E,\gamma}_\eta(\pi(\beta))\quad\Rightarrow\quad\alpha<\beta.
\end{equation*}
When we have $\beta<\psi D^\gamma$, the premise of this implication amounts to
\begin{equation*}
\alpha\in\supp^{D,\gamma}_\eta(D^\gamma(\iota)(\pi_D(\beta)))=\supp^{D,\gamma}_{\psi D[\gamma]}(\pi_D(\beta)),
\end{equation*}
where the last equality holds since supports are natural. We now obtain $\alpha<\beta$ by condition~(\ref{psi-fp1}) for~$\pi_D$. When we have $\beta=\psi D^\gamma+\beta'$, the premise of the desired implication is equivalent to
\begin{equation*}
\alpha\in\supp^{E,\gamma}_\eta(\pi_E(\beta')).
\end{equation*}
If we have $\alpha<\psi D^\gamma$, we immediately get the desired conclusion~$\alpha<\beta$. Otherwise, we may write $\alpha=\psi D^\gamma+\alpha'$ to get
\begin{equation*}
\alpha'\in\supp^{E,\delta}_{\psi E[\delta]}(\pi_E(\beta')).
\end{equation*}
Due to~(\ref{psi-fp1}) for~$\pi_E$, this yields $\alpha'<\beta'$ and hence $\alpha<\beta$, as desired. The remaining condition demands
\begin{equation*}
\pi(\alpha)<\tau\text{ for all }\alpha\in\supp^{D+E,\gamma}_\eta(\tau)\quad\Rightarrow\quad\tau\in\rng(\pi).
\end{equation*}
First assume we have $\tau<D^\gamma(\eta)$. Let us note that $\pi(\alpha)<D^\gamma(\eta)$ forces $\alpha<\psi D^\gamma$. 
Given the premise of the desired implication, we thus obtain
\begin{equation*}
\supp^{D,\gamma}_\eta(\tau)\subseteq\rng(\iota).
\end{equation*}
This allows us to write $\tau=D^\gamma(\iota)(\tau_0)$, due to the support condition from the~definition of dilators. We get $\pi(\alpha)=D^\gamma(\iota)(\pi_D(\alpha))<\tau$ and hence $\pi_D(\alpha)<\tau_0$ for~all
\begin{equation*}
\alpha\in\supp^{D+E,\gamma}_\eta(\tau)=\supp^{D,\gamma}_{\psi D[\gamma]}(\tau_0).
\end{equation*}
Condition~(\ref{psi-fp2}) for~$\pi_D$ yields $\tau_0=\pi_D(\beta)$ with $\beta<\psi D^\gamma$. We get $\tau=\pi(\beta)\in\rng(\pi)$, as desired. Finally, we consider~$\tau=D^\gamma(\eta)+\tau'$. Given the premise of the desired implication, we see that any~$\alpha'$ validates
\begin{align*}
\alpha'\in\supp^{E,\delta}_{\psi E[\delta]}(\tau')\quad&\Rightarrow\quad\psi D^\gamma+\alpha'\in\supp^{D+E,\gamma}_\eta(\tau)\\
{}&\Rightarrow\quad\pi(\psi D^\gamma+\alpha')<\tau\quad\Rightarrow\quad\pi_E(\alpha')<\tau'.
\end{align*}
By condition~(\ref{psi-fp2}) for~$\pi_E$, we may write~$\tau'=\pi_E(\beta)$ and thus~$\tau=\pi(\psi D^\gamma+\beta)$.
\end{proof}

The following lemma (or rather its proof) will help us to analyse sums of limit length. Recall that for dilators, $D\leq E$ denotes the existence of a natural transformation. Given~$D\leq E$, we clearly get $D^\gamma\leq E^\gamma$ for any ordinal~$\gamma$ (cf.~Lemma~\ref{lem:Omega-nat-trans}).

\begin{lemma}[$\prs$]
For any dilators~$D\leq E$, we have $\psi D\leq\psi E$. More precisely, if~$\psi E$ exists, then $\psi D$ exists and the indicated inequality holds.
\end{lemma}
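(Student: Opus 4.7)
The plan is to realize $\psi D$ as a well-ordered subset of $\psi E$. Since a natural transformation between dilators is Cartesian, $\eta:D\Rightarrow E$ factors as an isomorphism $D\cong E[A_0]$ for the unique $A_0\subseteq\tr(E)$ given by the image of $\tr(\eta)$. Writing $\pi_E:\psi E\to E(\psi E)$ for the embedding attached to $\psi E$, the idea is that the ordinals below $\psi D$ should correspond to those $\alpha<\psi E$ for which $\pi_E(\alpha)$ can be pulled back through $\eta_{\psi E}$, recursively on the support.

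The first step is to define $A\subseteq\psi E$ as the smallest subset satisfying: whenever $\alpha<\psi E$, the trace of $\pi_E(\alpha)$ lies in $A_0$, and $\supp_{\psi E}(\pi_E(\alpha))\subseteq A$, then $\alpha\in A$. The key observation that makes this available in $\prs$ is that by~(\ref{psi-fp1}) we have $\supp_{\psi E}(\pi_E(\alpha))\subseteq\{\beta:\beta<\alpha\}$, so what looks like a least-fixed-point definition is really a well-founded recursion along $<$; Proposition~\ref{prop:pred-rec} with $P(\alpha)=\supp_{\psi E}(\pi_E(\alpha))$ and $R(\alpha)=\alpha$ then yields the characteristic function of $A$ primitive recursively. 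Set $\Omega=\otp(A)\leq\psi E$ and let $c:\Omega\to A$ be the order-isomorphism.

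The second step is to exhibit $\Omega$ as the $\psi$-fixed point of $D$. For $\alpha\in\Omega$, write $\pi_E(c(\alpha))$ in normal form as $E(\en_a^{\psi E})(\tau_0)$ with $a=\supp_{\psi E}(\pi_E(c(\alpha)))\subseteq A$ and $(\tau_0,|a|)\in A_0$, so that $\tau_0\in D(|a|)$ under $D\cong E[A_0]$; letting $a'=c^{-1}(a)\subseteq\Omega$, define
\[
\pi_D(\alpha)=D(\en_{a'}^\Omega)(\tau_0)\in D(\Omega).
\]
Condition~(\ref{psi-fp1}) for $\pi_D$ then transports immediately from (\ref{psi-fp1}) for $\pi_E$ along $c$. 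For~(\ref{psi-fp2}), given $\tau\in D(\Omega)$ with $\pi_D(\alpha)<\tau$ for all $\alpha\in\supp_\Omega(\tau)$, set $\tilde\tau=E(c)(\eta_\Omega(\tau))\in E(\psi E)$. Naturality of $\eta$ and the explicit form of $\pi_D$ yield $\eta_{\psi E}(D(c)(\pi_D(\alpha)))=\pi_E(c(\alpha))$, so the hypothesis transfers to $\pi_E(c(\alpha))<\tilde\tau$ for every $c(\alpha)\in\supp_{\psi E}(\tilde\tau)=c[\supp_\Omega(\tau)]$. Now~(\ref{psi-fp2}) for $\pi_E$ yields $\tilde\tau=\pi_E(\gamma)$ with $\gamma<\psi E$, and the closure property of $A$ forces $\gamma\in A$, since the trace of $\tilde\tau$ agrees with that of $\tau$ (and hence lies in $A_0$) and its support lies in $A$; unwinding the definitions gives $\pi_D(c^{-1}(\gamma))=\tau$. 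This establishes $\psi D=\Omega\leq\psi E$.

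The main obstacle is the construction of $A$ within $\prs$: the naive candidate $\{\alpha<\psi E:\pi_E(\alpha)\in\rng(\eta_{\psi E})\}$ does not satisfy the closure under supports that is needed to define $\pi_D$, while a literal least-fixed-point definition is not directly available in $\prs$. The trick is to exploit~(\ref{psi-fp1}) in order to convert the closure condition into a well-founded recursion amenable to the Jensen--Karp primitive recursive set functions via Proposition~\ref{prop:pred-rec}.
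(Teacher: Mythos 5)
Your proposal is correct and follows essentially the same route as the paper's proof: you define the same recursively generated subset of $\psi E$ (your trace-in-$A_0$ condition is just the condition $\pi_E(\alpha)\in\rng(\eta_{\psi E})$ phrased via the factorization $D\cong E[A_0]$), take its order type, and pull $\pi_E$ back through $\eta$ to obtain $\pi_D$, with the well-foundedness of the recursion secured by condition~(\ref{psi-fp1}) exactly as in the paper. The only difference is presentational: you construct $\pi_D$ explicitly via normal forms and verify~(\ref{psi-fp1}) and~(\ref{psi-fp2}) in detail, where the paper argues through a commutative diagram and leaves that verification to the reader.
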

\begin{proof}
Consider a natural transformation~$\eta:D\Rightarrow E$ and a map~$\pi_E:\psi E\to E(\psi E)$ as in the definition of $\psi$-fixed points. We stipulate
\begin{equation*}
\Psi=\left\{\alpha<\psi E\,\left|\,\pi_E(\alpha)\in\rng\big(\eta_{\psi E}\big)\text{ and }\supp^E_{\psi E}(\pi_E(\alpha))\subseteq\Psi\right.\right\}.
\end{equation*}
Note that $\alpha\in\Psi$ can be decided by recursion on~$\alpha$, as condition~(i) of Definition~\ref{def:psi-fp} guarantees that $\alpha'\in\supp^E_{\psi(E)}(\pi(\alpha))$ entails~$\alpha'<\alpha$. By Lemma~\ref{lem:otp-restr}, we have an ordinal~$\psi D$ that admits an embedding~$f:\psi D\to\psi E$ with range~$\Psi$. To justify the notation, we show that~$\psi D$ is indeed our $\psi$-fixed point. We claim that there are embeddings~$\pi_D$ and~$g$ that make
\begin{equation*}
\begin{tikzcd}
\psi D\arrow[rr,"f"]\arrow[rd,dashed,"g"]\arrow[d,dashed,swap,"\pi_D"] & & \psi E\arrow[d,"\pi_E"]\\
D(\psi D)\arrow[r,"D(f)"]\arrow[rr,swap,bend right=20,"E(f)\circ\eta_{\psi D}"] & D(\psi E)\arrow[r,"\eta_{\psi E}"] & E(\psi E)
\end{tikzcd}
\end{equation*}
a commutative diagram. Let us first note that the curved arrow, which has been added for later reference, arises from the naturality of~$\eta$. The existence of a $g$ for which the inner quadrangle commutes is guaranteed by the first conjunct from the definition of~$\Psi$. For the triangle, we recall that any natural transformation between dilators preserves supports (see~\cite{girard-pi2} or Lemma~2.19 of~\cite{freund-rathjen_derivatives}), which yields the first equality in
\begin{equation*}
\supp^D_{\psi E}(g(\alpha))=\supp^E_{\psi E}\left(\eta_{\psi E}\circ g(\alpha)\right)=\supp^E_{\psi E}(\pi_E\circ f(\alpha))\subseteq\Psi=\rng(f).
\end{equation*}
Here the inclusion holds by the second conjuct from the definition of~$\Psi$. Now the support condition for dilators (see Definition~\ref{def:predil}) yields~$g(\alpha)\in\rng(D(f))$, as needed to find~$\pi_D$ in the diagram. One can verify that $\pi_D$ satisfies the conditions from Definition~\ref{def:psi-fp}.
\end{proof}

As promised, we now consider sums of limit length.

\begin{proposition}[$\atrs$]\label{prop:psi-limit-sum}
When~$\lambda$ is a limit ordinal and~$D_\alpha$ for~$\alpha<\lambda$ are dilators, then any~$\gamma$ validates
\begin{equation*}
\psi\big(\textstyle\sum_{\alpha<\lambda} D_\alpha\big)^\gamma=\textstyle\sup_{\mu<\lambda}\psi\big(\textstyle\sum_{\alpha<\mu} D_\alpha\big)^\gamma.
\end{equation*}
More precisely, if the $\psi$-fixed points of $(\sum_{\alpha<\mu} D_\alpha)^\gamma$ for~$\mu<\lambda$ exist, their supremum is the $\psi$-fixed point of~$(\sum_{\alpha<\lambda} D_\alpha)^\gamma$.
\end{proposition}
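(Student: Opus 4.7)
The plan is to build the embedding $\pi$ witnessing the $\psi$-fixed point property of the supremum $\Omega := \sup_{\mu<\lambda}\psi(\sum_{\alpha<\mu}D_\alpha)^\gamma$ by gluing together the embeddings $\pi_\mu$ from the given $\psi$-fixed points of the partial sums. To ease notation, write $D_{<\mu} := \sum_{\alpha<\mu}D_\alpha$, $\eta_\mu := \psi D_{<\mu}^\gamma$, and $D := D_{<\lambda}$, so that each $\pi_\mu : \eta_\mu \to D_{<\mu}^\gamma(\eta_\mu)$ satisfies the clauses of Definition~\ref{def:psi-fp}.

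The first step is to establish compatibility of the $\pi_\mu$. For $\mu \leq \mu' < \lambda$, apply Proposition~\ref{prop:psi-sum} to the decomposition $D_{<\mu'} = D_{<\mu} + \sum_{\alpha \in [\mu,\mu')}D_\alpha$ to obtain $\eta_{\mu'} = \eta_\mu + \psi(\sum_{\alpha\in[\mu,\mu')}D_\alpha)^{\gamma+\eta_\mu}$; in particular $\eta_\mu$ is an initial segment of $\eta_{\mu'}$, and we let $\iota_{\mu,\mu'}:\eta_\mu\hookrightarrow\eta_{\mu'}$ and $\iota_\mu:\eta_\mu\hookrightarrow\Omega$ denote the inclusions. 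Inspecting the piecewise definition of $\pi$ in the proof of Proposition~\ref{prop:psi-sum} (combined with Lemma~\ref{lem:psi-unique} for uniqueness) yields the coherence $\pi_{\mu'} \circ \iota_{\mu,\mu'} = D_{<\mu}^\gamma(\iota_{\mu,\mu'}) \circ \pi_\mu$. Declaring $\pi(\beta) = D_{<\mu}^\gamma(\iota_\mu)(\pi_\mu(\beta))$ for any $\mu$ with $\beta<\eta_\mu$, viewed in $D^\gamma(\Omega)$ via the natural initial-segment inclusion $D_{<\mu}^\gamma(\Omega)\hookrightarrow D^\gamma(\Omega)$, therefore gives a well-defined order embedding.

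For condition~(\ref{psi-fp1}), naturality of supports yields $\supp_\Omega(\pi(\beta)) = [\iota_\mu]^{<\omega}(\supp_{\eta_\mu}(\pi_\mu(\beta)))$, so any $\alpha$ in this set lies below $\eta_\mu$ with $\alpha<\beta$ by the corresponding clause for $\pi_\mu$. For condition~(\ref{psi-fp2}), given $\tau\in D^\gamma(\Omega)$ with $\pi(\alpha)<\tau$ for all $\alpha\in\supp_\Omega(\tau)$, write $\tau=(\alpha_0,\sigma_0)$ with $\alpha_0<\lambda$. Using that $\lambda$ is a limit and that the finite set $\supp_\Omega(\tau)$ is bounded within the cofinal family $\{\iota_\mu(\eta_\mu)\}_{\mu<\lambda}$, pick $\nu<\lambda$ with $\alpha_0<\nu$ and $\supp_\Omega(\tau)\subseteq\iota_\nu(\eta_\nu)$. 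The support condition for $D_{<\nu}^\gamma$ yields a unique $\tau_0\in D_{<\nu}^\gamma(\eta_\nu)$ with $\tau=D_{<\nu}^\gamma(\iota_\nu)(\tau_0)$; the hypothesis together with the coherence of the first step forces $\pi_\nu(\alpha')<\tau_0$ for each $\alpha'\in\supp_{\eta_\nu}(\tau_0)$, so~(\ref{psi-fp2}) for $\pi_\nu$ produces a $\beta<\eta_\nu$ with $\tau_0=\pi_\nu(\beta)$, whence $\tau=\pi(\iota_\nu(\beta))\in\rng(\pi)$.

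The main obstacle is the reflection in the final step, which relies essentially on $\lambda$ being a limit: both the outer-sum index of $\tau$ and its finite support must be trapped below some $\nu<\lambda$, which is what ensures $\Omega$ is a genuine $\psi$-fixed point rather than merely an upper bound. Every other verification reduces to transcribing the corresponding property of a single $\pi_\nu$ through the coherent inclusions set up at the outset.
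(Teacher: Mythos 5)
Your overall architecture -- glue the embeddings $\pi_\mu$ along a cofinal family and then verify conditions~(\ref{psi-fp1}) and~(\ref{psi-fp2}) by pulling everything back to a single $\pi_\nu$ -- is the same as the paper's, and your verification of the two conditions (including the use of the limit hypothesis to trap both the outer summand index of $\tau$ and its finite support below some $\nu<\lambda$) is essentially the paper's argument and is fine. The gap is in the compatibility step. You obtain the coherence $\pi_{\mu'}\circ\iota_{\mu,\mu'}=D_{<\mu}^\gamma(\iota_{\mu,\mu'})\circ\pi_\mu$ by applying Proposition~\ref{prop:psi-sum} to the decomposition $D_{<\mu'}=D_{<\mu}+\sum_{\alpha\in[\mu,\mu')}D_\alpha$, but that proposition runs in one direction only: it \emph{assumes} that the $\psi$-fixed points of $D_{<\mu}^\gamma$ and of $\big(\sum_{\alpha\in[\mu,\mu')}D_\alpha\big)^{\gamma+\eta_\mu}$ exist and then constructs the fixed point of $D_{<\mu'}^\gamma$. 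The hypotheses of the present proposition give you only the fixed points of the initial sums at the fixed base~$\gamma$; the existence of $\psi\big(\sum_{\alpha\in[\mu,\mu')}D_\alpha\big)^{\gamma+\eta_\mu}$ is not among them, and it cannot be extracted from the monotonicity lemma either, since in general there is no natural transformation from the tail at the shifted base into any $D_{<\nu}^\gamma$ (the internal parameter $\eta_\mu$ cannot be absorbed into~$\gamma$). What you are implicitly using is a converse decomposition principle (``if $\psi(D+E)^\gamma$ exists, then $\psi E^{\gamma+\psi D^\gamma}$ exists and the fixed point splits accordingly''), which is true but needs its own proof of roughly the same substance as the coherence argument it is meant to replace. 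A smaller point in the same step: Lemma~\ref{lem:psi-unique} only gives uniqueness of the fixed-point \emph{ordinal}; to identify $\pi_{\mu'}$ with a glued embedding you need uniqueness of the embedding itself, which comes from Proposition~2.1 of the cited reference (as remarked after Definition~\ref{def:psi-fp}).

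For comparison, the paper derives coherence without any appeal to Proposition~\ref{prop:psi-sum}: from $\pi_{\mu'}$ one forms the set $\Psi^{\mu,\mu'}\subseteq\eta_{\mu'}$ as in the proof of the lemma directly preceding the proposition, obtaining a $\psi$-fixed-point structure for $D_{<\mu}^\gamma$ on it; uniqueness of the pair $(\eta_\mu,\pi_\mu)$ identifies this with the given data, and since $\Psi^{\mu,\mu'}$ is shown to be downward closed (the relevant natural transformation has initial-segment range), the comparison map is literally the inclusion of ordinals, which is exactly the coherence you need. Replacing your use of Proposition~\ref{prop:psi-sum} by this argument repairs the proof. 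You should also add a word on formalization in $\atrs$: one needs the family $(\pi_\mu)_{\mu<\lambda}$ and the supremum $\Omega$ to exist as sets, which the paper secures via the uniformly computable representations of the $\psi$-fixed points together with axiom beta and Lemma~\ref{lem:otp-restr}.
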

Let us note that the supremum in the last sentence can be obtained in~$\atrs$. This is because the orders~$\psi(\textstyle\sum_{\alpha<\mu} D_\alpha)^\gamma$ have uniformly computable representations (see the paragraph after Lemma~\ref{lem:psi-unique}). Given that these are well-founded, their sum is isomorphic to an ordinal that bounds the supremum, due to axiom beta. The supremum itself is then provided by Lemma~\ref{lem:otp-restr}.
\begin{proof}
We abbreviate~$(\sum_{\alpha<\mu}D_\alpha)^\gamma$ as $D_{<\mu}$ or $D[<\mu]$ (the latter to save subscripts) and write $\supp^\mu_\beta:D_{<\mu}(\beta)\to[\beta]^{<\omega}$ for the associated support functions. Let
\begin{equation*}
\pi_\mu:\psi D_{<\mu}\to D_{<\mu}\big(\psi D_{<\mu}\big)
\end{equation*}
witness that we are concerned with $\psi$-fixed points. The maps~$\pi_\mu$ are encoded in the computable representations that were mentioned in the paragraph before this proof. It follows that the family of maps~$\pi_\mu$ for~$\mu<\lambda$ is available in~$\atrs$.

To glue the maps~$\pi_\mu$, we need to show that they are compatible. For~$\mu<\nu\leq\lambda$, let~$\eta^{\mu,\nu}:D_{<\mu}\Rightarrow D_{<\nu}$ be the obvious natural transformation. Now assume that we have~$\mu<\nu<\lambda$. By Proposition~2.1 of~\cite{freund-rathjen-iterated-Pi11}, not only the order~$\psi D_{<\mu}$ but also the embedding~$\pi_\mu$ is uniquely determined by the conditions from Definition~\ref{def:psi-fp}. In view of the previous proof, the diagram
\begin{equation*}
\begin{tikzcd}
\psi D_{<\mu}\arrow[rrr,"f^{\mu,\nu}"]\arrow[d,swap,"\pi_\mu"] & & & \psi D_{<\nu}\arrow[d,"\pi_\nu"]\\
D_{<\mu}(\psi D_{<\mu})\arrow[rrr,swap,"D_{<\nu}(f^{\mu,\nu})\circ\eta^{\mu,\nu}_{\psi D[<\mu]}"] & & & D_{<\nu}(\psi D_{<\nu})
\end{tikzcd}
\end{equation*}
will thus commute for the unique embedding~$f^{\mu,\nu}$ with range
\begin{equation*}
\Psi^{\mu,\nu}=\left\{\alpha<\psi D_{<\nu}\,\left|\,\pi_\nu(\alpha)\in\rng\left(\eta^{\mu,\nu}_{\psi D[<\nu]}\right)\text{ and }\supp^\nu_{\psi D[<\nu]}(\pi_\nu(\alpha))\subseteq\Psi^{\mu,\nu}\right.\right\}.
\end{equation*}
Let us note that the range of $\eta^{\mu,\nu}_{\psi D[<\nu]}$ is an initial segment of its codomain. Crucially, an induction on~$\alpha<\beta\in\Psi^{\mu,\nu}$ thus yields~$\alpha\in\Psi^{\mu,\nu}$ (as $\alpha'\in\supp^\nu_{\psi D[<\nu]}(\pi_\nu(\alpha))$ implies $\alpha'<\alpha$ by Definition~\ref{def:psi-fp}). It follows that $f^{\mu,\nu}$ is an inclusion of ordinals. Writing~$\iota^\mu:\psi D_{<\mu}\to\sup_{\nu<\lambda}\psi D_{<\nu}=:\Omega$ for the inclusions into the supremum, we obtain an embedding~$\pi$ such that the diagram
\begin{equation*}
\begin{tikzcd}
\psi D_{<\mu}\arrow[rrr,"\iota^\mu"]\arrow[d,swap,"\pi_\mu"] & & & \Omega\arrow[d,dashed,"\pi"]\\
D_{<\mu}(\psi D_{<\mu})\arrow[rrr,swap,"D_{<\lambda}(\iota^\mu)\circ\eta^{\mu,\lambda}_{\psi D[<\mu]}"] & & & D_{<\lambda}(\Omega)
\end{tikzcd}
\end{equation*}
commutes for all~$\mu<\lambda$.

In the following, we show that $\pi:\Omega\to D_{<\lambda}(\Omega)$ validates condition~(\ref{psi-fp2}). The verification of~(\ref{psi-fp1}) is easier and left to the reader. Consider $\alpha<\Omega$ and $\tau\in D_{<\lambda}(\Omega)$ such that $\pi(\alpha)<\tau$ holds for all~$\alpha\in\supp^\lambda_\Omega(\tau)$. We can write $\tau=\eta^{\mu,\lambda}_\Omega(\tau_0)$ for any large enough~$\mu$. Increasing the latter if necessary, we may also assume
\begin{equation*}
\supp^\mu_\Omega(\tau_0)=\supp^\lambda_\Omega(\tau)\subseteq\psi D_{<\mu}=\rng(\iota^\mu),
\end{equation*}
which allows us to write $\tau_0=D_{<\mu}(\iota^\mu)(\tau_1)$. For an arbitrary $\alpha'\in\supp^\mu_{\psi D[<\mu]}(\tau_1)$, we get $\iota^\mu(\alpha')\in\supp^\mu_\Omega(\tau_0)$ and hence
\begin{equation*}
\pi\circ\iota^\mu(\alpha')<\tau=\eta^{\mu,\lambda}_\Omega\circ D_{<\mu}(\iota^\mu)(\tau_1)=D_{<\lambda}(\iota^\mu)\circ\eta^{\mu,\lambda}_{\psi D[<\mu]}(\tau_1).
\end{equation*}
In view of the previous diagram, this yields~$\pi_\mu(\alpha')<\tau_1$. By condition~(\ref{psi-fp2}) for~$D_{<\mu}$, we obtain~$\tau_1\in\rng(\pi_\mu)$, which implies~$\tau\in\rng(\pi)$, as required.
\end{proof}

Our aim is to characterize~$\psi D^\gamma$ by recursion on~$D$. The previous results cover dilators of types~$0,1$ and~$\omega$. For~$D$ of type~$\Omega$, they allow us to focus on the connected component~$E\neq 1$ with~$D=\{D\}^0+E$ (cf.~the observation after Definition~\ref{def:J}). We will use the following variant of separation of variables.

\begin{definition}\label{def:sep+-}
For a connected dilator~$D\neq 1$ and any ordinal~$\gamma$, we define $D^\gamma_-$ and~$D^\gamma_+$ as the dilators with
\begin{align*}
D^\gamma_-(\alpha)&=\{(\sigma;\alpha_0,\ldots,\alpha_n)\in D(\gamma+\alpha)\,|\,\alpha_{i(\sigma)}<\gamma\},\\
D^\gamma_+(\alpha)&=\{(\sigma;\alpha_0,\ldots,\alpha_n)\in D(\gamma+\alpha)\,|\,\alpha_{i(\sigma)}\geq\gamma\},
\end{align*}
where the actions on morphisms and the supports are explained as in Definition~\ref{def:sep-vars} (recall from Definition~\ref{def:import-coeff} that $i(\sigma)$ is the most important argument position).
\end{definition}

Due to Lemma~\ref{lem:import-coeff}, we have $D^\gamma=D^\gamma_-+D^\gamma_+$. Here the summands are uniquely determined by the following property.

\begin{lemma}[$\prs$]\label{lem:D-plus-connected}
If $D$ is connected and different from~$1$, then so is~$D^\gamma_+$.
\end{lemma}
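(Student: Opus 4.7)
The plan is to identify the trace elements of $D^\gamma_+$ explicitly, and then to reduce non-triviality and connectedness of $D^\gamma_+$ to the same properties of $D$. Note first that since $D\neq 1$ is connected, the remark preceding Definition~\ref{def:import-coeff} guarantees that every $(\sigma,n)\in\tr(D)$ satisfies $n\geq 1$, so $i(\sigma)$ is always defined.

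Unwinding Definition~\ref{def:sep+-} together with the description of the support inherited from Definition~\ref{def:sep-vars}, I would first observe that every $(\tau,m)\in\tr(D^\gamma_+)$ has the canonical form
\[
\tau=(\sigma;\delta_0,\ldots,\delta_{k-1},\gamma,\gamma+1,\ldots,\gamma+m-1),
\]
with $\delta_0<\cdots<\delta_{k-1}<\gamma$ and $(\sigma,k+m)\in\tr(D)$ such that $i(\sigma)\geq k$; in particular $m\geq 1$. With this dictionary, non-triviality is immediate by direct construction: for any $(\sigma,n)\in\tr(D)$, the element $(\sigma;\gamma,\ldots,\gamma+n-1)$ lies in $D^\gamma_+(n)$, giving $D^\gamma_+\neq 0$; and applying $D^\gamma_+$ to the two $\nat$-embeddings $n\to n+1$ given by inclusion and successor-shift yields two elements of $D^\gamma_+(n+1)$ with distinct supports in $\gamma+n+1$, giving $D^\gamma_+\neq 1$.

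The substantial part is connectedness. Given $(\tau_i,m_i)\in\tr(D^\gamma_+)$ with associated $(\sigma_i,k_i+m_i)\in\tr(D)$ as above, the central claim is that $\sigma_1\ll\sigma_2$ in $\tr(D)$ forces $\tau_1\ll\tau_2$ in $\tr(D^\gamma_+)$. For any $f_i:m_i\to m_1+m_2$ in $\nat$, the formal expressions identify $D^\gamma_+(f_i)(\tau_i)$ with a pair $(\sigma_i,a_i)\in\overline D(\gamma+m_1+m_2)$ for explicit finite sets $a_i$; by Definition~\ref{def:dil-extend} the comparison in $\overline D(\gamma+m_1+m_2)$ reduces to a comparison $D(g_1)(\sigma_1)<D(g_2)(\sigma_2)$ in $D(|a_1\cup a_2|)$ for induced embeddings $g_i:k_i+m_i\to|a_1\cup a_2|$. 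Since $\ll$ in $\tr(D)$ is universally quantified over all such pairs (and allows arbitrarily large codomains), this inequality is automatic. Taking the contrapositive in both directions then converts $\sigma_1\equiv\sigma_2$, which is given by the connectedness of $D$, into $\tau_1\equiv\tau_2$.

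The main obstacle is setting up the dictionary in the first step: matching trace elements of $D^\gamma_+$ against the conventions of Definitions~\ref{def:sep-vars} and~\ref{def:sep+-}---in particular the role of the index $i(\sigma)$---with enough precision that the reduction of $\ll$-comparisons in $D^\gamma_+$ to those in $D$ appears as a direct instance of the universal statement defining~$\ll$. Once that dictionary is in place, each remaining step is a short unfolding of definitions.
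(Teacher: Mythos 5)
Your dictionary for $\tr(D^\gamma_+)$ and the two non-triviality observations are fine and agree with the paper's proof. The gap is in the connectedness step, and it is a genuine logical one: the implication you call the central claim, namely $\sigma_1\ll\sigma_2$ in $\tr(D)$ forces $\tau_1\ll\tau_2$ in $\tr(D^\gamma_+)$, points in the wrong direction. Its contrapositive is $\tau_1\not\ll\tau_2\Rightarrow\sigma_1\not\ll\sigma_2$, so from the hypothesis $\sigma_1\equiv\sigma_2$ you obtain no information whatsoever about whether $\tau_1\ll\tau_2$ holds. What connectedness of $D^\gamma_+$ requires is the converse, $\tau_1\ll\tau_2\Rightarrow\sigma_1\ll\sigma_2$ (and symmetrically), i.e.\ for each ordered pair you must \emph{exhibit} embeddings $f_1,f_2$ with $D^\gamma_+(f_1)(\tau_1)\geq D^\gamma_+(f_2)(\tau_2)$. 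Moreover, your reduction does not yield this converse either: since $\ll$ in $\tr(D^\gamma_+)$ quantifies only over the $f_i$, the induced embeddings $g_i$ range over a restricted family -- they keep the frozen parameters $\delta_0,\ldots,\delta_{k-1}<\gamma$ at fixed interleaved positions and send the movable arguments into the part above $\gamma$ -- so from $\tau_1\ll\tau_2$ you only get $D(g_1)(\sigma_1)<D(g_2)(\sigma_2)$ for these special pairs, which is not enough to conclude $\sigma_1\ll\sigma_2$ (and hence not enough to contradict $\sigma_1\equiv\sigma_2$).

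The paper closes exactly this gap with Lemma~\ref{lem:import-coeff}, which your proposal never invokes in the connectedness part. Because $k_i\leq i(\sigma_i)$, the most important argument position of each trace element of $D^\gamma_+$ lies among the movable arguments (those of the form $\gamma+x$). Hence, given any two trace elements, one can choose $f_1,f_2$ placing the movable arguments of either one entirely above those of the other; Lemma~\ref{lem:import-coeff} then makes its image strictly larger, which refutes $\ll$ in both directions and gives $\equiv$ directly, with no transfer of incomparability from $\tr(D)$ at all. So the repair is not a rearrangement of your contrapositives but the explicit construction of witnessing embeddings via the most important argument; connectedness of $D$ enters only to make $i(\sigma)$ and Lemma~\ref{lem:import-coeff} available.
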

\begin{proof}
The trace of~$D^\gamma_+$ (see Section~\ref{sect:prelim-dil}) can be described as
\begin{multline*}
\tr(D^\gamma_+)=\big\{(\sigma\langle\gamma_0,\ldots,\gamma_{i-1}\rangle,n+1-i)\,:\,(\sigma,n+1)\in\tr(D)\text{ and}\\
\gamma_0<\ldots<\gamma_{i-1}<\gamma\text{ with }i\leq i(\sigma)\big\},
\end{multline*}
where the embeddings~$D^\gamma_+(\alpha)\to D(\gamma+\alpha)$ are then given by
\begin{equation*}
(\sigma\langle\gamma_0,\ldots,\gamma_{i-1}\rangle;\alpha_i,\ldots,\alpha_n;\alpha)\mapsto(\sigma;\gamma_0,\ldots,\gamma_{i-1},\gamma+\alpha_i,\ldots,\gamma+\alpha_n;\gamma+\alpha).
\end{equation*}
From Lemma~\ref{lem:import-coeff} we learn that~$\alpha_{i(\sigma)}<\beta_{i(\tau)}$ implies
\begin{equation*}
(\sigma\langle\gamma_0,\ldots,\gamma_{i-1}\rangle;\alpha_i,\ldots,\alpha_m)<(\tau\langle\delta_0,\ldots,\delta_{j-1}\rangle;\beta_j,\ldots,\beta_n).
\end{equation*}
According to Definition~\ref{def:connected}, we thus have
\begin{equation*}
(\tau\langle\delta_0,\ldots,\delta_{j-1}\rangle,n+1-j)\not\ll(\sigma\langle\gamma_0,\ldots,\gamma_{i-1}\rangle,m+1-i).
\end{equation*}
We similarly get $\not\gg$ and hence~$\equiv$, as needed to show that $D^\gamma_+$ is connected. To see that $D^\gamma_+$ is not constant, it suffices to note that the second component $n+1-i$ of any trace element is positive (since we have $i\leq i(\sigma)\leq n$).
\end{proof}

Due to the lemma, we can iterate the construction.

\begin{definition}
Consider a connected dilator~$D\neq 1$. For ordinals~$\gamma(i)$, we put
\begin{equation*}
D^{\langle\rangle}_+=D\quad\text{and}\quad D^{\langle\gamma(0),\ldots,\gamma(n)\rangle}_+=\left(D^{\langle\gamma(0),\ldots,\gamma(n-1)\rangle}_+\right)^{\gamma(n)}_+.
\end{equation*}
Furthermore, we put $D^{\langle\gamma(0),\ldots,\gamma(n)\rangle}_-=\big(D^{\langle\gamma(0),\ldots,\gamma(n-1)\rangle}_+\big)^{\gamma(n)}_-$.
\end{definition}

Let us point out that we have
\begin{equation*}
D^{\langle\gamma(0),\ldots,\gamma(n)\rangle}_+=D^\gamma_+\quad\text{for}\quad\gamma=\textstyle\sum_{i\leq n}\gamma(i).
\end{equation*}
This means that one can avoid the iterative construction. The latter nevertheless seems natural, e.\,g., because it readily yields
\begin{equation*}
D=\textstyle\sum_{i\leq n} D^{\langle\gamma(0),\ldots,\gamma(i)\rangle}_-+D^{\langle\gamma(0),\ldots,\gamma(n)\rangle}_+.
\end{equation*}
In the context of the following proof, the last summand vanishes for large~$n$. We~note that the next theorem is analogous to Proposition~1.6 of~\cite{increasing-goodstein}.

\begin{theorem}[$\prs$]\label{thm:psi-connected}
For a connected dilator~$D\neq 1$ and any~$\gamma$, we have
\begin{equation*}
\psi D^\gamma=\textstyle\sum_{n<\omega}\gamma(n+1)\quad\text{with}\quad\gamma(0)=\gamma\text{ and }\gamma(n+1)=\psi D^{\langle\gamma(0),\ldots,\gamma(n)\rangle}_-.
\end{equation*}
More precisely, if the sequence of $\psi$-fixed points~$\gamma(n+1)$ exists, then the indicated sum is the $\psi$-fixed point of~$D^\gamma$.
\end{theorem}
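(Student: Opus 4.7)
The plan is to construct a $\psi$-fixed point embedding $\pi : \Omega \to D^\gamma(\Omega)$ for $\Omega = \sum_{n<\omega} \gamma(n+1)$ by patching together the given witnessing embeddings $\pi_n : \gamma(n+1) \to D^{\langle\gamma(0),\ldots,\gamma(n)\rangle}_-(\gamma(n+1))$. Setting $\Omega_n = \sum_{k<n} \gamma(k+1)$, each codomain above sits naturally inside $D(\gamma + \Omega_{n+1})$, and I will define $\pi$ on the $n$-th block of $\Omega$ by $\pi(\Omega_n + \beta') = \pi_n(\beta')$ after lifting along the inclusion $\gamma + \Omega_{n+1} \hookrightarrow \gamma + \Omega$. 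Uniqueness of the $\psi$-fixed point (Lemma~\ref{lem:psi-unique}) will then give the claimed equation, so the real work is to verify the two conditions of Definition~\ref{def:psi-fp} for this $\pi$.

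The technical backbone of the proof is the following structural observation, proved by induction on $n$ from Definition~\ref{def:sep+-} together with the iterative construction preceding the theorem: any element of $D^{\langle\gamma(0),\ldots,\gamma(n)\rangle}_-(\alpha)$, regarded as an element of $D$, has its most important $D$-argument lying in the interval $[\gamma + \Omega_{n-1}, \gamma + \Omega_n)$ for $n \geq 1$, and strictly below $\gamma$ for $n = 0$. With this stratification in hand, Lemma~\ref{lem:import-coeff} immediately settles all cross-block comparisons of $\pi$-values, so that $\pi$ is an order embedding. It also controls the $D^\gamma$-support of $\pi(\Omega_n + \beta')$: the support contribution in $[0, \Omega_n)$ is harmless because it consists of elements already below $\Omega_n + \beta'$, while the contribution in $[\Omega_n, \Omega_{n+1})$ corresponds, after the shift by $\Omega_n$, exactly to the $D^{\langle\gamma(0),\ldots,\gamma(n)\rangle}_-$-support of $\pi_n(\beta')$ in $\gamma(n+1)$. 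Condition~(\ref{psi-fp1}) thereby reduces to (\ref{psi-fp1}) for each $\pi_n$.

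Condition~(\ref{psi-fp2}) is the more delicate one. Given $\tau \in D^\gamma(\Omega)$ satisfying $\pi(\alpha) < \tau$ for all $\alpha \in \supp^{D^\gamma}_\Omega \tau$, I will determine the unique $n$ such that the most important $D$-argument of $\tau$ lies in $[\gamma + \Omega_{n-1}, \gamma + \Omega_n)$, taking $n = 0$ if this argument is below $\gamma$. Combining the stratification with Lemma~\ref{lem:import-coeff} shows that any $\alpha \geq \Omega_{n+1}$ would force $\pi(\alpha) > \tau$, so the hypothesis yields $\supp^{D^\gamma}_\Omega \tau \subseteq [0, \Omega_{n+1})$; together with the location of the most important argument, this places $\tau$ inside $D^{\langle\gamma(0),\ldots,\gamma(n)\rangle}_-(\gamma(n+1))$. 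Translating back from the $D^\gamma$-support to the $D^{\langle\gamma(0),\ldots,\gamma(n)\rangle}_-$-support and applying (\ref{psi-fp2}) for $\pi_n$ then produces a $\beta' < \gamma(n+1)$ with $\tau = \pi_n(\beta') = \pi(\Omega_n + \beta')$.

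The main obstacle will be the careful bookkeeping required to track, through the iterated $\pm$-decomposition, which $D$-arguments become absorbed into the fixed part at each stage, and correspondingly how the various support notions (for $D^\gamma$, for $D^{\langle\gamma(0),\ldots,\gamma(n-1)\rangle}_+$, and for $D^{\langle\gamma(0),\ldots,\gamma(n)\rangle}_-$) translate into one another. Once this is set up cleanly, both verifications reduce to direct appeals to the corresponding condition on each $\pi_n$ combined with Lemma~\ref{lem:import-coeff}. Everything remains within $\prs$: given the sequence of $\gamma(n+1)$, the partial sums $\Omega_n$ and their supremum $\Omega$, the extraction of each $\pi_n$ from the primitive recursive representation of $\psi$-fixed points underlying Lemma~\ref{lem:psi-prim-rec}, and the patched embedding $\pi$ are all obtained by straightforward primitive recursion.
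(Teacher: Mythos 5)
Your proposal is correct and follows essentially the same route as the paper: glue the embeddings $\pi_n$ blockwise into $\pi:\sum_{n<\omega}\gamma(n+1)\to D^\gamma(\sum_{n<\omega}\gamma(n+1))$ via the iterated $\pm$-decomposition and reduce both conditions of Definition~\ref{def:psi-fp} to the corresponding conditions for the $\pi_n$, with Lemma~\ref{lem:import-coeff} handling cross-block comparisons. The only minor difference is how exhaustiveness of the blocks is obtained: you locate each element of $D^\gamma(\Omega)$ directly by the interval containing its most important argument, while the paper argues with the abstract sum decompositions and excludes elements lying in every iterated $+$-part by finiteness of supports together with connectedness of $D^{\langle\gamma(0),\ldots,\gamma(n)\rangle}_+$ -- the two arguments are interchangeable.
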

\begin{proof}
We abbreviate $\gamma[n]=\langle\gamma(0),\ldots,\gamma(n-1)\rangle$ and write
\begin{equation*}
\pi_n:\gamma(n+1)\to D^{\gamma[n+1]}_-\big(\gamma(n+1)\big)
\end{equation*}
for the embeddings that come with the given $\psi$-fixed points. Note that the sequence of these embeddings is available as in the proof of Proposition~\ref{prop:psi-limit-sum}. By the remark after Definition~\ref{def:sep+-}, we have
\begin{equation*}
\big(D^{\gamma[n]}_+\big)^{\gamma(n)}=D^{\gamma[n+1]}_-+D^{\gamma[n+1]}_+.
\end{equation*}
Writing $\delta(n)=\sum_{n\leq i<\omega}\gamma(i)$, we note~$\gamma(n)+\delta(n+1)=\delta(n)$ to get
\begin{equation*}
D^{\gamma[n]}_+(\delta(n))=D^{\gamma[n+1]}_-(\delta(n+1))+D^{\gamma[n+1]}_+(\delta(n+1)).
\end{equation*}
The inclusions of summands compose into embeddings
\begin{equation*}
\mu_n:D^{\gamma[n+1]}_-(\delta(n+1))\to D^{\gamma[0]}_+(\delta(0))=D^\gamma(\delta(1)).
\end{equation*}
Since $\mu_{n+1}$ factors via~$D^{\gamma[n+1]}_+(\delta(n+1))$, we have $\mu_n(\sigma)<\mu_{n+1}(\tau)$ for all arguments. Writing $\iota_n:\gamma(n+1)\hookrightarrow\delta(n+1)$ for the inclusions, we thus get an embedding
\begin{equation*}
\pi:\delta(1)=\textstyle\sum_{n<\omega}\gamma(n+1)\to D^\gamma(\delta(1))
\end{equation*}
by stipulating
\begin{equation*}
\pi\left(\textstyle\sum_{i<n}\gamma(i+1)+\alpha\right)=\mu_n\circ D^{\gamma[n+1]}_-(\iota_n)\circ\pi_n(\alpha)\qquad\text{for }\alpha<\gamma(n+1).
\end{equation*}
If $\supp^n$ and~$\supp$ are the supports associated with~$D^{\gamma[n+1]}_-$ and~$D^\gamma$, we have
\begin{equation*}
\supp^n_{\delta(n+1)}(\tau)=\left\{\xi\,\left|\,\textstyle\sum_{i<n}\gamma(i+1)+\xi\in\supp_{\delta(1)}(\mu_n(\tau))\right.\right\}.
\end{equation*}
Indeed, this iterates the characterization of supports from Definitions~\ref{def:sep-vars} and~\ref{def:sep+-}.

To show that~$\pi$ satisfies condition~(\ref{psi-fp1}), we assume
\begin{equation*}
\alpha\in\supp_{\delta(1)}(\pi(\beta))\quad\text{with}\quad\beta=\textstyle\sum_{i<n}\gamma(i+1)+\beta'\text{ for }\beta'<\gamma(n+1).
\end{equation*}
If we have $\alpha<\sum_{i<n}\gamma(i+1)$, the desired conclusion~$\alpha<\beta$ is immediate. In the remaining case, we may write $\alpha=\sum_{i<n}\gamma(i+1)+\xi$. We obtain
\begin{equation*}
\xi\in\supp^n_{\delta(n+1)}\big(D^{\gamma[n+1]}_-(\iota_n)\circ\pi_n(\beta')\big)=\supp^n_{\gamma(n+1)}(\pi_n(\beta')).
\end{equation*}
Now condition~(\ref{psi-fp1}) for~$\pi_n$ yields~$\xi<\beta'$ and thus~$\alpha<\beta$.

As a preparation for condition~(\ref{psi-fp2}), we establish
\begin{equation*}
D^\gamma(\delta(1))=\textstyle\bigcup_{n<\omega}\rng(\mu_n).
\end{equation*}
In the construction of~$\mu_n$ above, we have implicitly constructed embeddings
\begin{equation*}
\nu_n:D^{\gamma[n+1]}_+(\delta(n+1))\to D^{\gamma[0]}(\delta(0))=D^\gamma(\delta(1))
\end{equation*}
as well. In view of the paragraph before this proof, these satisfy
\begin{equation*}
D^\gamma(\delta(1))=\textstyle\bigcup_{i\leq n}\rng(\mu_i)\cup\rng(\nu_n).
\end{equation*}
Aiming at a contradiction, we assume there is an element~$\tau\in D^\gamma(\delta(1))$ that can be written as~$\tau=\nu_n(\tau_n)$ for each~$n<\omega$. Write~$\supp^{n,+}$ for the support of~$D^{\gamma[n+1]}_+$. As in the case of~$\mu_n$ above, we obtain
\begin{equation*}
\xi\in\supp^{n,+}_{\delta(n+1)}(\tau_n)\quad\Rightarrow\quad\textstyle\sum_{i<n}\gamma(i+1)+\xi\in\supp_{\delta(1)}(\tau).
\end{equation*}
Since~$\supp_{\delta(1)}(\tau)$ is finite, the conclusion of this implication must fail for large~$n$. But then $\tau_n$ has empty support, which is impossible since~$D^{\gamma[n+1]}_+\neq 1$ is connected.

In order to verify condition~(\ref{psi-fp2}), we now consider an arbitrary~$\tau\in D^\gamma(\delta(1))$ with $\pi(\alpha)<\tau$ for all~$\alpha\in\supp_{\delta(1)}(\tau)$. As we have just seen, we can write $\tau=\mu_n(\tau')$ for a suitable~$n$. Consider the chain of implications
\begin{align*}
\xi\in\supp^n_{\delta(n+1)}(\tau')\quad&\Rightarrow\quad\textstyle\sum_{i<n}\gamma(i+1)+\xi\in\supp_{\delta(1)}(\tau)\\
{}&\Rightarrow\quad\pi\big(\textstyle\sum_{i<n}\gamma(i+1)+\xi\big)<\mu_n(\tau')\quad\Rightarrow\quad\xi<\gamma(n+1).
\end{align*}
Due to the support condition for dilators, this allows us to write $\tau'=D^{\gamma[n+1]}_-(\iota_n)(\tau'')$ with~$\tau''\in D^{\gamma[n+1]}_-(\gamma(n+1))$. By the implications above, $\xi\in\supp^n_{\gamma(n+1)}(\tau'')$ implies
\begin{equation*}
\mu_n\circ D^{\gamma[n+1]}_-(\iota_n)\circ\pi_n(\xi)=\pi\big(\textstyle\sum_{i<n}\gamma(i+1)+\xi\big)<\mu_n\circ D^{\gamma[n+1]}_-(\iota_n)(\tau'')
\end{equation*}
and hence~$\pi_n(\xi)<\tau''$. Now condition~(\ref{psi-fp2}) for~$D^{\gamma[n+1]}_-$ yields~$\tau''\in\rng(\pi_n)$. This readily entails $\tau\in\rng(\pi)$, as required.
\end{proof}

The previous theorem does not correspond to an induction over dilators in the strict sense, where the predecessors of a connected dilator~$D\neq 1$ have the form~$\{D\}^\gamma$ and not~$D^\gamma_-$ (cf.~Section~\ref{sect:prelim-dil}). In the following we show that the modified predecessors still yield a valid induction principle.

\begin{remark}\label{rmk:dil-ind-liberal}
Let $P^\star_\xi(D)$ be explained as in Definition~\ref{def:dil-preds}, except in the case where~$D$ has type~$\Omega$, in which we set
\begin{multline*}
P^\star_\xi(D)=\left\{\left.D_0+E^{\langle\gamma(0),\ldots,\gamma(n)\rangle}_-\,\right|\,\gamma(i)<\xi\text{ for }i\leq n\right\}\\
\text{for }D=D_0+E\text{ with connected }E\neq 1.
\end{multline*}
We claim that Theorems~\ref{thm:guarded-ind} and~\ref{thm:guarded-rec} (guarded induction and recursion on dilators) remain valid when~$P$ is replaced by~$P^*$. To adapt the proofs, it is enough to have a version of Lemma~\ref{lem:sep-pred}. Specifically, we need
\begin{equation*}
E^{\langle\gamma(0),\ldots,\gamma(n)\rangle}_-\left(\omega^\delta\right)<E\left(\omega^\delta\right)\quad\text{for $\gamma(i)<\omega^\delta$ with~$\delta>0$}.
\end{equation*}
For $E_\star=E^{\langle\gamma(0),\ldots,\gamma(n-1)\rangle}_+$ we have~$E_\star(\omega^\delta)=E(\sum_{i<n}\gamma(i)+\omega^\delta)=E(\omega^\delta)$, where the last equality exploits that~$\omega^\delta$ is additively principal. As in the proof of the original lemma (where we needed~$\omega^\delta\geq\omega$), we also have~$(E_\star)^{\gamma(n)}_-(\omega^\delta)<E_\star(\omega^\delta)$.
\end{remark}

In order to connect with the original version of induction on dilators, we first note that $\{D\}^\gamma\leq D^\gamma_-$ holds by Definitions~\ref{def:sep-vars} and~\ref{def:sep+-}. The following converse bound will be needed later.

\begin{lemma}[$\prs$]\label{lem:D-min-sep}
For connected~$D\neq 1$, we have $D^\gamma_-\leq(\{D\}^\gamma)^\gamma$.
\end{lemma}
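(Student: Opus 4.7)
My plan is to exhibit an explicit natural transformation $\eta: D^\gamma_- \Rightarrow (\{D\}^\gamma)^\gamma$ that ``splits'' the support of a given element at the important coordinate index. For $\sigma = (\sigma^\circ; a_0, \ldots, a_n) \in D^\gamma_-(X) \subseteq D(\gamma + X)$ with $i := i(\sigma^\circ)$ (so that $a_i < \gamma$), I set $\bar{a}_k := a_k$ for $k \leq i$, viewing $a_k$ as an element of the first copy of $\gamma$ inside $\gamma + \gamma + X$, and $\bar{a}_k := \gamma + a_k$ for $k > i$, where $\gamma + (-)$ denotes the natural shift embedding $\gamma + X \hookrightarrow \gamma + (\gamma + X)$ (sending values below $\gamma$ into the second copy of $\gamma$ and values of the form $\gamma + c$ into $\gamma + \gamma + c$). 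Then $\eta_X(\sigma) := (\sigma^\circ; \bar{a}_0, \ldots, \bar{a}_n)$.

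Two routine checks come first. The tuple $(\bar{a}_0, \ldots, \bar{a}_n)$ is strictly increasing because $a_i < \gamma \leq \gamma + a_{i+1}$ while both halves are order-preserving, and $\eta_X(\sigma)$ lies in $\{D\}^\gamma(\gamma + X) = (\{D\}^\gamma)^\gamma(X)$ because $\bar{a}_k < \gamma$ holds exactly when $k \leq i$, so $i(\sigma^\circ) = \max\{k : \bar{a}_k < \gamma\}$ as required by Definition~\ref{def:sep-vars}. Naturality with respect to an embedding $f: X \to Y$ and compatibility with supports reduce to observing that the splitting depends only on the index $i(\sigma^\circ)$ and that the shift $y \mapsto \gamma + y$ on $\gamma + X$ commutes with $\gamma + f$ acting in the inner slot of $\gamma + (\gamma + X)$; both are immediate from the definitions.

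The non-trivial step, and the main obstacle I foresee, is to show that each $\eta_X$ is order preserving, i.e., $\sigma < \tau$ in $D(\gamma + X)$ iff $\eta_X(\sigma) < \eta_X(\tau)$ in $D(\gamma + \gamma + X)$. The subtlety is that a value $v < \gamma$ shared between the supports $a$ and $b$ can be split into two distinct elements $v$ (in the first $\gamma$) and $\gamma + v$ (in the second $\gamma$) of $\bar{a} \cup \bar{b}$ when it sits at position $\leq i(\sigma^\circ)$ in $a$ but $> i(\tau^\circ)$ in $b$, so $|a \cup b|$ and $|\bar{a} \cup \bar{b}|$ can differ and the inclusion data governing the comparisons in $D$ (per Definition~\ref{def:dil-extend}) need not match literally. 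I plan to resolve this by a case distinction on the important coordinates. If $a_{i(\sigma^\circ)} \neq b_{i(\tau^\circ)}$, then Lemma~\ref{lem:import-coeff} determines both comparisons purely from these coordinates, and since $\bar{a}_{i(\sigma^\circ)} = a_{i(\sigma^\circ)}$ and $\bar{b}_{i(\tau^\circ)} = b_{i(\tau^\circ)}$ both lie in the first copy of $\gamma$, where the embedding is order preserving, the two comparisons agree. If $a_{i(\sigma^\circ)} = b_{i(\tau^\circ)} = v$, then any shared $w \in a \cap b$ with $w < v$ must occur at a position $< i(\sigma^\circ)$ in $a$ and $< i(\tau^\circ)$ in $b$, and dually for $w > v$ (otherwise $w$ would be simultaneously less and greater than $v$), so no shared value splits; a direct rank count then shows that the inclusion maps $|\iota_a^{a \cup b}|$ and $|\iota_b^{a \cup b}|$ coincide with their barred counterparts, reducing both comparisons to the same statement in $D(|a \cup b|) = D(|\bar{a} \cup \bar{b}|)$. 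Together with injectivity (which is immediate from recovering $\sigma^\circ$ and each $a_k$ from $\eta_X(\sigma)$), this yields the desired natural transformation and hence $D^\gamma_- \leq (\{D\}^\gamma)^\gamma$.
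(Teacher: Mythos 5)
Your proposal is correct and follows essentially the same route as the paper: the same explicit map that keeps the coordinates up to position $i(\sigma)$ in the first copy of~$\gamma$ and shifts the remaining ones by~$\gamma$, with the work concentrated on order preservation via the case distinction on whether the important coordinates $a_{i(\sigma)}$ and $b_{i(\tau)}$ coincide (Lemma~\ref{lem:import-coeff} handling the strict case). Your treatment of the equal case -- no shared value splits, so the inclusion data in $D(|a\cup b|)$ are literally unchanged -- is just a more explicit rendering of the paper's appeal to functoriality.
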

To avoid misunderstanding, we point out that the superscripts in $(\{D\}^\gamma)^\gamma$ refer to two different constructions, which are explained by Definitions~\ref{def:sep-vars} and~\ref{def:D-gamma}.
\begin{proof}
In view of Definitions~\ref{def:sep-vars} and~\ref{def:sep+-}, we can view~$D^\gamma_-$ and $(\{D\}^\gamma)^\gamma$ as subdilators of $D^\gamma$ and~$D^{\gamma\cdot 2}$. This allows us to define $\eta_\alpha:D^\gamma_-(\alpha)\to(\{D\}^\gamma)^\gamma(\alpha)$ by
\begin{equation*}
(\sigma;\alpha_0,\ldots,\alpha_n;\gamma+\alpha)\mapsto(\sigma;\alpha_0',\ldots,\alpha_n';\gamma\cdot 2+\alpha),
\end{equation*}
where we set
\begin{equation*}
\alpha_i'=\begin{cases}
\alpha_i & \text{if }i\leq i(\sigma),\\
\gamma+\alpha_i & \text{otherwise}.
\end{cases}
\end{equation*}
It is not hard to see that these maps are natural in~$\alpha$. The crucial task is to show that they are order preserving. Let us consider an inequality
\begin{equation*}
(\sigma;\alpha_0,\ldots,\alpha_m;\gamma+\alpha)<(\tau;\beta_0,\ldots,\beta_n;\gamma+\alpha).
\end{equation*}
Our task is to derive
\begin{equation*}
(\sigma;\alpha'_0,\ldots,\alpha'_m;\gamma\cdot 2+\alpha)<(\tau;\beta'_0,\ldots,\beta'_n;\gamma\cdot 2+\alpha),
\end{equation*}
where the~$\alpha_i'$ and~$\beta_j'$ are defined as above. Lemma~\ref{lem:import-coeff} forces~$\alpha_{i(\sigma)}\leq\beta_{i(\tau)}$. If the inequality is strict, we get $\alpha_{i(\sigma)}'<\beta_{i(\tau)}'$, which yields the desired conclusion. Now assume~$\alpha_{i(\sigma)}=\beta_{i(\tau)}$. Crucially, it follows that we have
\begin{equation*}
\alpha_i=\beta_j\quad\Rightarrow\quad\alpha_i'=\beta_j'.
\end{equation*}
The point is that the premise makes~$i\leq i(\sigma)$ equivalent to~$j\leq j(\tau)$. We also get
\begin{equation*}
\alpha_i<\beta_j\quad\Rightarrow\quad\alpha_i'<\beta_j'.
\end{equation*}
Here the premise ensures that~$i(\sigma)<i$ implies~$i(\tau)<j$. We can conclude by the functoriality of dilators (or more explicitly by Section~0.1.1 of~\cite{girard-pi2}).
\end{proof}

To obtain recursive clauses for~$\psi$, we needed to characterize~$\psi D^\gamma$ simultaneously for various~$\gamma$. It will be important to know how the precedessors of $D^\gamma$ can be described in terms of the predecessors of~$D$.

\begin{lemma}[$\prs$]\label{lem:D-gamma-sep}
(a) We have $(\sum_{\mu<\nu}D_\mu)^\gamma=\sum_{\mu<\nu}D_\mu^\gamma$.

(b) If~$D\neq 1$ is connected, $D^\gamma$ has type~$\Omega$ and we get~$\{D^\gamma\}^\delta\leq(\{D\}^{\gamma+\delta})^\gamma$.
\end{lemma}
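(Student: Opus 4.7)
\medskip
\noindent\textbf{Proof plan.} Part~(a) should fall out of the definitions: by Definition~\ref{def:D-gamma} we have $(\sum_{\mu<\nu}D_\mu)^\gamma(\alpha)=(\sum_{\mu<\nu}D_\mu)(\gamma+\alpha)$, and unwinding the sum construction (from the paragraph before Proposition~\ref{prop:sums}) gives
\begin{equation*}
\big\{(\mu,\sigma)\,\big|\,\mu<\nu\text{ and }\sigma\in D_\mu(\gamma+\alpha)\big\}=\big\{(\mu,\sigma)\,\big|\,\mu<\nu\text{ and }\sigma\in D_\mu^\gamma(\alpha)\big\}=\textstyle\sum_{\mu<\nu}D_\mu^\gamma(\alpha).
\end{equation*}
The actions on embeddings and the support functions then match on the nose, since both $\sum$ and $(\cdot)^\gamma$ act summand-wise.

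For part~(b), I first want to show that $D^\gamma$ has type~$\Omega$. By Definition~\ref{def:sep+-} and Lemma~\ref{lem:import-coeff}, we have $D^\gamma=D^\gamma_-+D^\gamma_+$, and Lemma~\ref{lem:D-plus-connected} ensures that~$D^\gamma_+$ is connected and different from~$1$. Decomposing~$D^\gamma_-$ into its connected components via Proposition~\ref{prop:sums}(a), the uniqueness of the decomposition means that $D^\gamma_+$ must appear as the last connected summand of~$D^\gamma$. By Definition~\ref{def:dil-types}, $D^\gamma$ is thus of type~$\Omega$, and moreover $\{D^\gamma\}^\delta=D^\gamma_-+\{D^\gamma_+\}^\delta$ by Definition~\ref{def:sep-vars}.

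To construct the desired natural transformation $\eta\colon D^\gamma_-+\{D^\gamma_+\}^\delta\Rightarrow(\{D\}^{\gamma+\delta})^\gamma$, I plan to define it separately on the two summands and then check that the images sit correctly inside $D(\gamma+\delta+\gamma+X)$. On the left summand, I would compose the embedding $D^\gamma_-\leq(\{D\}^\gamma)^\gamma$ from Lemma~\ref{lem:D-min-sep} with the map $(\{D\}^\gamma)^\gamma\leq(\{D\}^{\gamma+\delta})^\gamma$ obtained by applying the construction $(\cdot)^\gamma$ to the natural transformation $\{D\}^\gamma\Rightarrow\{D\}^{\gamma+\delta}$ from Lemma~\ref{lem:Omega-nat-trans}(a). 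Tracing through, this sends $(\sigma;c_0,\ldots,c_n)\in D^\gamma_-(X)$ (so $c_{i(\sigma)}<\gamma$) to $(\sigma;d_0,\ldots,d_n)\in D(\gamma+\delta+\gamma+X)$ with $d_i=c_i$ for $i\leq i(\sigma)$ and $d_i=\gamma+\delta+c_i$ for $i>i(\sigma)$. On the right summand, I would use the order embedding
\begin{equation*}
f\colon\gamma+\delta+X\to\gamma+\delta+\gamma+X,\quad f(\alpha)=\alpha\text{ for }\alpha<\gamma+\delta,\quad f(\gamma+\delta+x)=\gamma+\delta+\gamma+x,
\end{equation*}
and show that $D(f)$ restricts to a map $\{D^\gamma_+\}^\delta(X)\to\{D\}^{\gamma+\delta}(\gamma+X)$. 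The analysis of the condition defining~$\{D^\gamma_+\}^\delta(X)$ inside $D(\gamma+\delta+X)$ (unfolding Definitions~\ref{def:sep-vars} and~\ref{def:sep+-} as in the proof of Lemma~\ref{lem:D-plus-connected}) shows that its elements $(\sigma;c_0,\ldots,c_n)$ satisfy $\gamma\leq c_{i(\sigma)}<\gamma+\delta$ and $c_j\geq\gamma+\delta$ for $j>i(\sigma)$, which is exactly what is needed for $D(f)(\sigma;c_0,\ldots,c_n)$ to meet the condition $i(\sigma)=\max\{i\mid f(c_i)<\gamma+\delta\}$.

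To glue the two component maps into one natural transformation out of the sum, I would check that the image of~$\eta_1$ sits strictly below the image of~$\eta_2$: the important coordinate of any output of~$\eta_1$ is $<\gamma$, while that of any output of~$\eta_2$ lies in $[\gamma,\gamma+\delta)$, so Lemma~\ref{lem:import-coeff} yields the required strict inequality in~$D(\gamma+\delta+\gamma+X)$. Naturality of both components follows from functoriality of~$D$ and the compatibility of the shift embeddings with the action of morphisms $g\colon X\to Y$. I expect the main obstacle to be the bookkeeping: one has to juggle three different notions of ``coordinate'' (in~$D$, in~$D^\gamma$, and via separation of variables) and confirm that the ad hoc piecewise shift on~$D^\gamma_-$, which does not come from a single order embedding of the ambient linear orders, nonetheless yields a natural support-preserving map — this is exactly what the factorisation through Lemma~\ref{lem:D-min-sep} is designed to provide.
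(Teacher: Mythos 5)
Your proposal is correct and follows essentially the same route as the paper: part (a) by summand-wise identification, and part (b) by writing $\{D^\gamma\}^\delta=D^\gamma_-+\{D^\gamma_+\}^\delta$, handling the left summand via Lemma~\ref{lem:D-min-sep} composed with $\{D\}^\gamma\leq\{D\}^{\gamma+\delta}$, handling the right summand via the description of $D^\gamma_+$ from the proof of Lemma~\ref{lem:D-plus-connected} (your map $D(f)$ is the paper's $\xi_\alpha$ followed by the shift into $\gamma+X$), and gluing by the important-coordinate comparison of Lemma~\ref{lem:import-coeff}. You merely make explicit some bookkeeping that the paper leaves implicit.
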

\begin{proof}
(a) Elements of~$(\sum_{\mu<\nu}D_\mu)^\gamma(\alpha)=(\sum_{\mu<\nu}D_\mu)(\gamma+\alpha)$ have the form
\begin{equation*}
\textstyle\sum_{\mu<\eta}D_\mu+\sigma\quad\text{with}\quad\sigma\in D_\eta(\gamma+\alpha)=D_\eta^\gamma(\alpha).
\end{equation*}
We can identify them with the elements~$\sum_{\mu<\eta}D_\mu^\gamma+\sigma$ of $\sum_{\mu<\nu}D_\mu^\gamma$.

(b) Recall that we have~$D^\gamma=D^\gamma_-+D^\gamma_+$ with connected~$D^\gamma_+\neq 1$, which yields
\begin{equation*}
\{D^\gamma\}^\delta=D^\gamma_-+\{D^\gamma_+\}^\delta.
\end{equation*}
By Lemma~\ref{lem:D-min-sep} and the proof of Lemma~\ref{lem:D-plus-connected}, we have natural embeddings
\begin{equation*}
\eta_\alpha:D^\gamma_-(\alpha)\to(\{D\}^{\gamma})^\gamma(\alpha)\quad\text{and}\quad\xi_\alpha:\{D^\gamma_+\}^\delta(\alpha)\to\{D\}^{\gamma+\delta}(\alpha).
\end{equation*}
We can clearly combine these into a map from $\{D^\gamma\}^\delta(\alpha)$ into~$(\{D\}^{\gamma+\delta})^\gamma$. Crucially, this map is an embedding because the leading argument of any value of~$\xi_\alpha$ must have the form~$\gamma+\delta_0$ with~$\delta_0<\delta$.
\end{proof}

\section{Bounding~$\psi$ by~$\mathbb J$}\label{sect:comparison}

In this section, we show how $\psi$-fixed points can be bounded by values of the functor~$\mathbb J$ due to P\"appinghaus (see the previous two sections for definitions of these objects). In particular, we prove that well-founded $\psi$-fixed points exist when~$\mathbb J$ is total. This completes the proof of Theorem~\ref{thm:J-Pi11CA} from the introduction.

It will be convenient to have a variant~$\mathbb J'$ of~$\mathbb J$ for which the recursive clause for dilators of type~$\Omega$ reads
\begin{equation*}
\mathbb J'(D,\gamma)=\alpha+\mathbb J'(\{D\}^\alpha,\gamma)\quad\text{with}\quad\alpha=\mathbb J'(\{D\}^\omega,\gamma).
\end{equation*}
Note that this has~$\{D\}^\omega$ where the clause for~$\mathbb J$ has~$\{D\}^0$. As before, the following provides a construction in our base theory.

\begin{definition}
The relation $\mathbb J'(D,\gamma)\simeq_\xi^\eta\delta$ is defined by the same recursive clauses as the relation $\mathbb J(D,\gamma)\simeq_\xi^\eta\delta$ from Definition~\ref{def:J}, except that clause~(iv) of the latter is replaced by the following:
\begin{enumerate}[label=(\roman*')]\setcounter{enumi}{3}
\item For~$D$ of type~$\Omega$, we have $\mathbb J'(D,\gamma)\simeq_\xi^\eta\delta$ precisely if there are ordinals~$\alpha,\beta$ with $\delta=\alpha+\beta$ and~$\mathbb J'(\{D\}^\omega,\gamma)\simeq_\xi^\eta\alpha$ as well as $\mathbb J'(\{D\}^\alpha,\gamma)\simeq_\xi^\eta\beta$.
\end{enumerate}
As before, we write $\mathbb J'(D,\gamma)\simeq\delta$ to express that $\mathbb J'(D,\gamma)\simeq_\xi^\eta\delta$ holds for some~$\eta,\xi$.
\end{definition}

All results of Section~\ref{sect:J} apply to~$\mathbb J'$ as well (essentially without changes in the proofs). In the following, we write $D\cdot\mu$ for~$\sum_{\alpha<\mu}D$.

\begin{lemma}[$\atrs$]
For $\gamma\geq\omega$ we have $\mathbb J'(D,\gamma)\lesssim\mathbb J(D\cdot 8,\gamma)$.
\end{lemma}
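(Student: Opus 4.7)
The plan is to proceed by guarded induction on~$D$ (Theorem~\ref{thm:guarded-ind}), fixing $\gamma\geq\omega$ and treating $\mathbb J'(D,\gamma)\lesssim\mathbb J(D\cdot 8,\gamma)$ as the implication: if $\mathbb J(D\cdot 8,\gamma)\simeq\delta$ then $\mathbb J'(D,\gamma)\simeq\delta'$ for some $\delta'\leq\delta$. The guard comes from Theorem~\ref{thm:guarded-rec}, as the predecessors used by both $\mathbb J'$ (i.e.\ $\{D\}^\omega$ and $\{D\}^{\alpha'}$) and $\mathbb J$ (i.e.\ $\{D\cdot 8\}^\nu$) live in $P^\star_\eta(D)$ for a common bound $\eta$ that one obtains from~$\delta$ via axiom~beta.

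For $D$ of type $0$, $1$ or~$\omega$ the top-level clauses of $\mathbb J'$ and $\mathbb J$ coincide, so the statement reduces to the IH on proper subterms. For instance, when $D=D'+1$ we have a natural transformation $D'\cdot 8+1\leq D\cdot 8$ (send the $i$-th $D'$-summand of $D'\cdot 8$ to the $D'$-part of the $i$-th $D$-summand of $D\cdot 8$, and the trailing $1$ to the final $1$-part); combining this with the IH and Lemma~\ref{lem:J-nat-trans}(a),(b) yields $\mathbb J(D\cdot 8,\gamma)\geq\mathbb J(D'\cdot 8,\gamma)+1\geq\mathbb J'(D',\gamma)+1\simeq\mathbb J'(D,\gamma)$. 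The type-$\omega$ case is the analogous supremum argument.

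The type~$\Omega$ case is the crux. Write $D=D_0+E$ with connected $E\neq 1$, so $\{D\}^\nu=D_0+\{E\}^\nu$, and put $\alpha'\simeq\mathbb J'(\{D\}^\omega,\gamma)$, $\beta'\simeq\mathbb J'(\{D\}^{\alpha'},\gamma)$. The IH gives $\alpha'\leq\mathbb J(\{D\}^\omega\cdot 8,\gamma)$ and $\beta'\leq\mathbb J(\{D\}^{\alpha'}\cdot 8,\gamma)$. The first step is a pointwise comparison: for every $\delta\geq\omega$,
\[
\mathbb J(\{D\}^\omega,\delta)\leq\mathbb J(D,\delta),
\]
and more generally $\mathbb J(\{D\}^\mu,\delta)\leq\mathbb J(D,\delta)$ whenever $\delta\geq\mu$. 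Both reduce (via Lemma~\ref{lem:J-nat-trans}(b)) to an inequality of the form $\mathbb J(\{E\}^\mu,\epsilon)\leq\mathbb J(E,\epsilon)$ with $\epsilon=\mathbb J(D_0,\delta)\geq\mu$; since $E$ is connected of type $\Omega$ with $\{E\}^0=0$, the $\Omega$-clause gives $\mathbb J(E,\epsilon)\simeq\epsilon+\mathbb J(\{E\}^\epsilon,\epsilon)$, and Lemma~\ref{lem:Omega-nat-trans}(a) yields $\mathbb J(\{E\}^\epsilon,\epsilon)\geq\mathbb J(\{E\}^\mu,\epsilon)$.

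Setting $\gamma_k\simeq\mathbb J(D\cdot k,\gamma)$, iterated application of this inequality gives $\mathbb J(\{D\}^\omega\cdot k,\gamma)\leq\gamma_k$, so $\alpha'\leq\gamma_8$. To obtain the combined bound $\alpha'+\beta'\leq\gamma_8$ one has to exploit that each single iteration $\gamma_k\mapsto\gamma_{k+1}$ already carries both halves of the $\Omega$-recursion: writing $\gamma_{k+1}\simeq\alpha_{k+1}+\mathbb J(\{E\}^{\alpha_{k+1}},\alpha_{k+1})$ with $\alpha_{k+1}\simeq\mathbb J(D_0,\gamma_k)\geq\gamma_k$, once $\gamma_k\geq\alpha'$ we have $\alpha_{k+1}\geq\alpha'$, so the second factor dominates $\mathbb J(\{D\}^{\alpha'},\gamma_k)$ and successive iterations absorb iterates of $\mathbb J(\{D\}^{\alpha'},\cdot)$. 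Balancing the two phases inside the eight available iterations of $\mathbb J(D,\cdot)$ is the main obstacle: the factor~$8$ is achieved only by coupling the two absorptions via the $\Omega$-clause rather than splitting the eight iterates into two disjoint blocks of size four. One checks, case by case on how much of $\alpha'$ is produced by the $D_0$-part and how much by the $\{E\}^\omega$-part, that four iterations of $D$ suffice to pass the threshold $\alpha'$ (one exploits the IH for $D_0$ together with the observation that the $\{D\}^\omega$-step can be routed through $E\cdot 4$ rather than $E\cdot 8$ using that $\{E\}^\omega\leq\{E\}^\epsilon$ for $\epsilon\geq\omega$), while the remaining four iterations suffice to cover $\beta'$ by the analogous routing for $\{D\}^{\alpha'}$. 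This is the technical heart and the point where the~$8$ appears.
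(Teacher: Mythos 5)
Your skeleton is sound and agrees with the intended argument: guarded induction with auxiliary bounds $\eta,\xi$ that come for free by Lemma~\ref{lem:J-indep}, the types $0,1,\omega$ via $D'\cdot 8+1\leq(D'+1)\cdot 8$ and Lemma~\ref{lem:J-nat-trans}, and the pointwise comparison $\mathbb J(\{D\}^\mu,\delta)\lesssim\mathbb J(D,\delta)$ for $\delta\geq\mu$ (which is correct and is also used in the actual proof). However, the decisive content of the type-$\Omega$ case is missing. The induction hypothesis gives $\alpha'\lesssim\mathbb J(\{D\}^\omega\cdot 8,\gamma)$ and $\beta'\lesssim\mathbb J(\{D\}^{\alpha'}\cdot 8,\gamma)$, and your pointwise comparison, iterated, yields only $\mathbb J(\{D\}^\omega\cdot 8,\gamma)\lesssim\mathbb J(D\cdot 8,\gamma)$: it spends all eight copies of~$D$ on the first half alone. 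The claims that ``four iterations of $D$ suffice to pass the threshold $\alpha'$'' and that the remaining four cover $\beta'$, via an unspecified case distinction and a ``routing through $E\cdot 4$'', are precisely the nontrivial point and are not proved; the inequality $\{E\}^\omega\leq\{E\}^\epsilon$ does nothing to shrink the multiplicity~$8$ that the induction hypothesis produces, and no mechanism for absorbing it is given.

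What is needed are two further ingredients. First, a natural transformation $\{D\}^\delta\cdot 8\leq\{D\}^{\delta\cdot 8}$: send $\big(i,(\sigma;\alpha_0,\ldots,\alpha_n)\big)$ to $(\sigma;\alpha'_0,\ldots,\alpha'_n)$ with $\alpha'_j=(i,\alpha_j)\in\delta\cdot 8$ for $j\leq i(\sigma)$ and $\alpha'_j=\alpha_j$ otherwise; this collapses the eightfold product coming from the induction hypothesis into a single separation at a larger superscript, which can then be absorbed by one application of~$D$ via your pointwise comparison. Second, the doubling estimate $\mathbb J(D\cdot n,\gamma)\cdot 2\lesssim\mathbb J(D\cdot(n+1),\gamma)$ (from the type-$\Omega$ clause together with Lemmas~\ref{lem:J-nat-trans}(b) and~\ref{lem:J-properties}(a)), which converts ordinal multiplication by~$8$ into three further iterations: $\omega\cdot 8\lesssim\mathbb J(D\cdot 3,\gamma)$, and once $\delta:=\mathbb J'(\{D\}^\omega,\gamma)\lesssim\mathbb J(D\cdot 4,\gamma)$ is known, $\delta\cdot 8\lesssim\mathbb J(D\cdot 7,\gamma)$. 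With Lemma~\ref{lem:Omega-nat-trans} one gets $\{D\}^\omega\cdot 8\leq\{D\}^{\mathbb J(D\cdot 3,\gamma)}$, hence $\delta\lesssim\mathbb J\big(\{D\}^{\mathbb J(D\cdot 3,\gamma)},\mathbb J(D\cdot 3,\gamma)\big)\lesssim\mathbb J(D\cdot 4,\gamma)$, and then $\mathbb J'(D,\gamma)\simeq\delta+\mathbb J'(\{D\}^\delta,\gamma)\lesssim\mathbb J(D\cdot 7,\gamma)+\mathbb J\big(\{D\}^{\mathbb J(D\cdot 7,\gamma)},\gamma\big)\lesssim\mathbb J(D\cdot 8,\gamma)$, the last step being the coupling through the $\Omega$-clause you allude to. Without the transformation $\{D\}^\delta\cdot 8\leq\{D\}^{\delta\cdot 8}$ and the doubling estimate, your accounting of the factor~$8$ does not close.
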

Here the symbol~$\lesssim$ is used as in Section~\ref{sect:J}. More explicitly, the lemma says that $\mathbb J(D\cdot 8,\gamma)\simeq\nu$ entails the existence if a~$\mu\leq\nu$ with $\mathbb J'(D,\gamma)\simeq\mu$. When the same holds with~$\simeq^\eta_\xi$ at the place of~$\simeq$, we write $\lesssim^\eta_\xi$ rather than~$\lesssim$. The lemma implies that~$\mathbb J'$ is total if the same holds for~$\mathbb J$.
\begin{proof}
For fixed~$\eta$ and $\xi$, we establish $\mathbb J'(D,\gamma)\lesssim^\eta_\xi\mathbb J(D\cdot 8,\gamma)$ by guarded induction on~$D$. The purpose of~$\eta$ and~$\xi$ is to make the induction statement primitive recursive. In view of Lemma~\ref{lem:J-indep}, however, it is enough to show $\mathbb J'(D,\gamma)\lesssim\mathbb J(D\cdot 8,\gamma)$ in the induction step (i.\,e., the bounds~$\eta$ and~$\xi$ come for free).

From Lemma~\ref{lem:J-nat-trans} we know that $D\leq E$ implies~$\mathbb J(D,\gamma)\lesssim\mathbb J(E,\gamma)$. This covers the induction step for dilators of the types~$1$ and~$\omega$ (due to~$D\cdot 8+1\leq (D+1)\cdot 8$). The case of type~$0$ is immediate.

It remains to consider a dilator~$D$ of type~$\Omega$. As preparation, we establish that we have~$\{D\}^\delta\cdot 8\leq\{D\}^{\delta\cdot 8}$. Elements of~$(\{D\}^\delta\cdot 8)(\alpha)$ correspond to pairs~$(i,\tau)$ with~$i<8$ and~$\tau\in\{D\}^\delta(\alpha)$, compared lexicographically. Writing~$\tau$ as in Definition~\ref{def:sep-vars}, we map into~$\{D\}^{\delta\cdot 8}(\alpha)$ by stipulating
\begin{equation*}
\big(i,(\sigma;\alpha_0,\ldots,\alpha_n)\big)\mapsto(\sigma;\alpha'_0,\ldots,\alpha'_n)
\end{equation*}
with $\alpha'_j=(i,\alpha_j)\in\delta\cdot 8$ for~$i\leq i(\sigma)$ and~$\alpha'_j=\alpha_j$ otherwise. This map is order preserving since comparisons between values are dominated by the component~$i$ of the crucial argument~$\alpha'_{i(\sigma)}=(i,\alpha_{i(\sigma)})$. Clearly, the construction is natural in~$\alpha$.

By the results of Section~\ref{sect:J}, we have
\begin{equation*}
\mathbb J(D\cdot n,\gamma)\cdot 2\lesssim\mathbb J(D,\mathbb J(D\cdot n,\gamma))\simeq\mathbb J(D\cdot(n+1),\gamma).
\end{equation*}
In view of~$\omega\leq\gamma=\mathbb J(D\cdot 0,\gamma)$, this implies~$\omega\cdot 8\lesssim\mathbb J(D\cdot 3,\gamma)$. The previous paragraph and Lemma~\ref{lem:Omega-nat-trans} yield~$\{D\}^\omega\cdot 8\leq\{D\}^{\mathbb J(D\cdot 3,\gamma)}$. Inductively, we get
\begin{multline*}
\delta:=\mathbb J'\big(\{D\}^\omega,\gamma\big)\lesssim\mathbb J\big(\{D\}^\omega\cdot 8,\gamma\big)\lesssim\mathbb J\big(\{D\}^{\mathbb J(D\cdot 3,\gamma)},\mathbb J(D\cdot 3,\gamma)\big)\\
{}\lesssim\mathbb J(D,\mathbb J(D\cdot 3,\gamma))\simeq\mathbb J(D\cdot 4,\gamma).
\end{multline*}
As above, this implies~$\delta\cdot 8\lesssim\mathbb J(D\cdot 7,\gamma)$. Note that we have~$\{D\cdot 8\}^\delta=D\cdot 7+\{D\}^\delta$. Due to the induction hypothesis, we get
\begin{multline*}
\mathbb J'(D,\gamma)\simeq\delta+\mathbb J'\big(\{D\}^\delta,\gamma\big)\lesssim\delta+\mathbb J\big(\{D\}^\delta\cdot 8,\gamma\big)\\
{}\lesssim\mathbb J(D\cdot 7,\gamma)+\mathbb J\big(\{D\}^{\mathbb J(D\cdot 7,\gamma)},\gamma\big)\lesssim\mathbb J(D\cdot 8,\gamma),
\end{multline*}
as needed to complete the induction step.
\end{proof}

The next proof would not go through with~$\mathbb J$ in place of~$\mathbb J'$.

\begin{lemma}[$\atrs$]\label{lem:D'-robust}
For $n<\omega\leq\gamma$ we have $\mathbb J'(D^n,\gamma)\simeq\mathbb J'(D,\gamma)$.
\end{lemma}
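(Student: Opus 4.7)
The plan is to first reduce the claim to the case $n=1$ and then argue by guarded induction on $D$. Since $D^{n+1}(\alpha) = D(n{+}1{+}\alpha) = D^n(1{+}\alpha) = (D^n)^1(\alpha)$, we have $D^{n+1} = (D^n)^1$, so iterating the case $n=1$ handles all $n<\omega$. It therefore suffices to prove $\mathbb J'(D^1,\gamma) \simeq \mathbb J'(D,\gamma)$ for $\gamma\geq\omega$, and I would establish this by guarded induction on $D$ via Theorem~\ref{thm:guarded-ind}, with induction statement ``$\mathbb J'(D^1,\gamma) \simeq_\xi^\eta \delta$ iff $\mathbb J'(D,\gamma) \simeq_\xi^\eta \delta$'' for fixed parameters $\gamma,\eta,\xi$ and quantified $\delta<\eta$, guarded by $G(D)=\eta$. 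The cases of type $0$ and type $1$ are immediate from the clauses of $\mathbb J'$ together with the identity $(D'{+}1)^1 = (D')^1{+}1$, which is verified directly on values.

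For $D=\sum_{\alpha<\lambda}D_\alpha$ of type $\omega$ with connected $D_\alpha$, Lemma~\ref{lem:D-gamma-sep}(a) gives $D^1 = \sum_{\alpha<\lambda}D_\alpha^1$; the sum clause of $\mathbb J'$ combined with the inductive hypothesis applied to each partial sum $\sum_{\alpha<\nu}D_\alpha$ for $\nu<\lambda$ yields the equality of suprema, after re-decomposing each $D_\alpha^1$ into its connected summands (which preserves the limit length of the total sum).

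The essential case is type $\Omega$, where $D = D_0+E$ with $E$ connected and $E\neq 1$. Then $D^1 = D_0^1+E^1$ is of type $\Omega$ as well (using Lemma~\ref{lem:D-plus-connected} for the type-$\Omega$ connected component of $E^1$), and the sum formula for separation of variables gives $\{D^1\}^\delta = D_0^1+\{E^1\}^\delta$ and $\{D\}^\delta = D_0+\{E\}^\delta$. Clause~(iv') yields $\mathbb J'(D,\gamma) \simeq \alpha + \mathbb J'(\{D\}^\alpha,\gamma)$ with $\alpha = \mathbb J'(\{D\}^\omega,\gamma)$, and analogously for $D^1$. I would conclude by establishing $\mathbb J'(\{D^1\}^\delta,\gamma) \simeq \mathbb J'(\{D\}^\delta,\gamma)$ first for $\delta=\omega$ (which forces both values of $\alpha$ to agree) and then for $\delta=\alpha$, using $\alpha\geq\gamma\geq\omega$ by Lemma~\ref{lem:J-properties}(a) so that $1+\alpha=\alpha$. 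Each such equality for $\delta\geq\omega$ is the combination of two inequalities. First, Lemma~\ref{lem:D-gamma-sep}(b) applied to $E$ gives $\{E^1\}^\delta \leq (\{E\}^{1+\delta})^1 = (\{E\}^\delta)^1$, whence $\{D^1\}^\delta \leq (\{D\}^\delta)^1$; Lemma~\ref{lem:J-nat-trans}(a) and the inductive hypothesis applied to $\{D\}^\delta\in P_\eta(D)$ yield $\mathbb J'(\{D^1\}^\delta,\gamma) \lesssim \mathbb J'(\{D\}^\delta,\gamma)$. Second, the natural transformation $D\Rightarrow D^1$ induced by the inclusions $\alpha\hookrightarrow 1+\alpha$, combined with Lemma~\ref{lem:Omega-nat-trans}(b) whose second alternative is forced since $D$ has type $\Omega$, gives $\{D\}^\delta \leq \{D^1\}^\delta$ and hence the reverse $\mathbb J'$-inequality via Lemma~\ref{lem:J-nat-trans}(a).

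The main obstacle I anticipate is the careful management of the guarding parameters $\eta,\xi$: the inductive hypothesis on the predecessors $\{D\}^\delta$ must be applied with bounds large enough for all their own predecessors, and one must verify that the common value $\alpha = \mathbb J'(\{D\}^\omega,\gamma)$ always stays below $\eta$. A secondary delicate point appears in the type-$\omega$ case, where one must check that the cofinal subsequence of partial sums $(\sum_{\alpha<\nu}D_\alpha)^1$ for $\nu<\lambda$ produces the same supremum as the full sequence of partial sums of $D^1$ in its connected decomposition.
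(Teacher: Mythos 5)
Your overall route is the paper's: guarded induction on $D$, with Lemma~\ref{lem:D-gamma-sep}(a) handling types $1$ and $\omega$, and the type-$\Omega$ step driven by Lemma~\ref{lem:D-gamma-sep}(b) (giving $\{E^1\}^\delta\leq(\{E\}^{1+\delta})^1=(\{E\}^\delta)^1$ for $\delta\geq\omega$) together with Lemma~\ref{lem:J-nat-trans}(a) and the induction hypothesis applied to the predecessors $\{D\}^\omega$ and $\{D\}^\alpha$. The preliminary reduction to $n=1$ via $D^{n+1}=(D^n)^1$ is a harmless variation: the required induction on $n$ is unproblematic because, with $\eta,\xi$ fixed, the statement with $\simeq^\eta_\xi$ is primitive recursive and $D^n(\omega^{1+\eta})=D(\omega^{1+\eta})$, so the same bounds serve for all $n$; your bookkeeping worries about $\eta,\xi$ and about re-decomposing $\sum_{\alpha<\lambda}D_\alpha^1$ into connected summands are real but no worse than what the paper itself leaves implicit.

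The one step that does not stand as written is the appeal to Lemma~\ref{lem:Omega-nat-trans}(b) with the claim that its ``second alternative is forced since $D$ has type~$\Omega$''. That lemma only yields the disjunction $D\leq\{D^1\}^0$ or ($D$ of type $\Omega$ and $\{D\}^\delta\leq\{D^1\}^\delta$), and the type of $D$ alone does not decide which disjunct the given natural transformation realizes (for instance, $D\leq D+F$ with connected $F\neq 1$ always lands in the first disjunct, whatever the type of $D$). So your derivation of $\{D\}^\delta\leq\{D^1\}^\delta$, and hence of the exact equality of the two values of $\alpha$, has a hole. It is repairable: the shift transformation $D\Rightarrow D^1$ sends an element of the final connected component $E$ of $D=D_0+E$ to one whose arguments all have the form $1+a_i$, so its most important argument is $\geq 1$ and the image lies in $E^1_+$; by Proposition~\ref{prop:sums}(b) and the connected case in the proof of Lemma~\ref{lem:Omega-nat-trans}(b) this gives $E\leq E^1_+$ and then $\{D\}^\delta=D_0+\{E\}^\delta\leq D_0^1+E^1_-+\{E^1_+\}^\delta=\{D^1\}^\delta$. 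It is also avoidable, and this is what the paper does: the direction $\mathbb J'(D,\gamma)\lesssim\mathbb J'(D^n,\gamma)$ follows at once from $D\leq D^n$ and Lemma~\ref{lem:J-nat-trans}(a), so only the converse $\lesssim$ needs induction, and there equality of the $\alpha$'s is not needed -- with $\alpha':=\mathbb J'(\{D^1\}^\omega,\gamma)\leq\alpha$ one has $\{D^1\}^{\alpha'}\leq\{D^1\}^{\alpha}\leq(\{D\}^\alpha)^1$ by Lemma~\ref{lem:Omega-nat-trans}(a) and Lemma~\ref{lem:D-gamma-sep}, which already yields $\mathbb J'(D^1,\gamma)\simeq\alpha'+\mathbb J'(\{D^1\}^{\alpha'},\gamma)\leq\alpha+\beta\simeq\mathbb J'(D,\gamma)$. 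With either fix your argument goes through and is essentially the published one.
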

\begin{proof}
We have~$\gtrsim$ since~$D$ embeds into~$D^n$ (see Lemma~\ref{lem:J-nat-trans}). For the converse inequality, we argue by guarded induction on~$D$. As in the previous proof, it is straightforward to replace~$\lesssim$ by~$\lesssim^\eta_\xi$ for suitable bounds, so that the induction statement becomes primitive recursive. We have~$D^n=D$ when~$D$ is constant. Together with Lemma~\ref{lem:D-gamma-sep}, this covers the induction step for dilators of the types~$0,1$ and~$\omega$. When~$D$ has type~$\Omega$, we write~$D=D_0+E$ with connected~$E\neq 1$. Let us observe that we have~$\{D^n\}^\delta=D_0^n+\{E^n\}^\delta$. Part~(b) of Lemma~\ref{lem:D-gamma-sep} yields
\begin{equation*}
\{E^n\}^\delta\leq(\{E\}^{n+\delta})^n=(\{E\}^\delta)^n\quad\text{for }\delta\geq\omega.
\end{equation*}
Inductively, we get
\begin{equation*}
\alpha:=\mathbb J'\big(\{E^n\}^\omega,\gamma\big)\lesssim\mathbb J'\big((\{E\}^\omega)^n,\gamma\big)\simeq\mathbb J'\big(\{E\}^\omega,\gamma\big)=:\beta.
\end{equation*}
As we have~$\alpha\geq\gamma\geq\omega$, another application of the induction hypothesis yields
\begin{align*}
\mathbb J'(E^n,\gamma)&\simeq\alpha+\mathbb J'\big(\{E^n\}^\alpha,\gamma\big)\lesssim\beta+\mathbb J'\big(\{E^n\}^\beta,\gamma\big)\\
{}&\lesssim\beta+\mathbb J'\big((\{E\}^\beta)^n,\gamma\big)\lesssim\beta+\mathbb J'\big(\{E\}^\beta,\gamma\big)\simeq\mathbb J'(E,\gamma).
\end{align*}
For $D=D_0+E$, the result is readily derived by Lemma~\ref{lem:J-nat-trans}(b).
\end{proof}

Our next aim is to show that values of the form~$\mathbb J'(\omega\circ(D+1),\gamma)$ have very good closure properties. Here $\omega$ refers to the dilator with
\begin{equation*}
\omega(\alpha)=\left\{\omega^{\alpha_0}+\ldots+\omega^{\alpha_{n-1}}\,|\,\alpha>\alpha_0\geq\ldots\geq\alpha_{n-1}\right\}.
\end{equation*}
One should think of Cantor normal forms, though officially the elements of~$\omega(\alpha)$ are formal expressions, which are compared lexicographically. When~$f:\alpha\to\beta$ is an embedding, $\omega(f):\omega(\alpha)\to\omega(\beta)$ is defined by pointwise application of~$f$ to the exponents. The support of $\omega^{\alpha_0}+\ldots+\omega^{\alpha_{n-1}}$ is given by~$\{\alpha_0,\ldots,\alpha_{n-1}\}$.

\begin{lemma}[$\prs$]\label{lem:omega-preds}
For any dilator~$D$ of type~$\Omega$, we have the following:
\begin{enumerate}[label=(\alph*)]
\item There is an~$n<\omega$ such that $\{D\}^\gamma+1\leq(\{D\}^{\gamma+1})^n$ holds for all~$\gamma\geq n$.
\item The dilator~$\omega\circ D$ is of type~$\Omega$ and we have $\omega\circ\{D\}^\gamma\leq\{\omega\circ D\}^\gamma$.
\end{enumerate}
\end{lemma}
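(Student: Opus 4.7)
My plan reduces both parts to the case $D = E$ connected (and $\neq 1$, hence of type~$\Omega$), by writing $D = D_0 + E$ so that $\{D\}^\gamma = D_0 + \{E\}^\gamma$ and the $D_0$ summand passes through both sides cleanly.

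For part~(a), I would fix any trace element $(\tau, k) \in \tr(E)$ (such $(\tau,k)$ exists with $k \geq 1$ since $E \neq 1$), set $n = k$, and build the natural transformation $\{E\}^\gamma + 1 \Rightarrow (\{E\}^{\gamma+1})^n$ from two pieces. The first piece is the natural embedding $\{E\}^\gamma(X) \hookrightarrow \{E\}^{\gamma+1}(n+X)$ given by
\[ (\sigma; \gamma_0, \ldots, \gamma_{i(\sigma)}, \gamma + x_0, \ldots, \gamma + x_{m-1}) \mapsto (\sigma; \gamma_0, \ldots, \gamma_{i(\sigma)}, \gamma + 1 + x_0, \ldots, \gamma + 1 + x_{m-1}), \]
where the $x_j$ on the right are interpreted in $n + X$ via the right-summand inclusion; the condition $\gamma_{i(\sigma)} < \gamma < \gamma + 1$ places the image in $\{E\}^{\gamma+1}$. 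The second piece is the single ``top'' element
\[ t := (\tau;\; 0, 1, \ldots, i(\tau)-1,\; \gamma,\; \gamma + 1 + 0, \ldots, \gamma + 1 + (k - i(\tau) - 2)) \in \{E\}^{\gamma+1}(n + X), \]
whose arguments all lie in $\gamma + n$ and are thus independent of $X$, giving naturality. For $\gamma \geq n$, both the initial indices $0, \ldots, i(\tau) - 1 < \gamma$ and the trailing indices $0, \ldots, k - i(\tau) - 2 < n$ fit. Lemma~\ref{lem:import-coeff} places $t$ strictly above every element in the image of the first piece, since the $i(\tau)$-th coordinate of $t$ equals $\gamma$ while the $i(\sigma)$-th coordinate of any such image is $\gamma_{i(\sigma)} < \gamma$.

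For part~(b), the structural heart is the decomposition $\omega \circ D \cong (\omega \circ D_0) + H$, where $H(X)$ consists of the expressions $\omega^{b_0} + \ldots + \omega^{b_{m-1}} \in (\omega \circ D)(X)$ whose leading exponent $b_0$ lies in $E(X)$. Since $D_0(X)$ is an initial segment of $D(X) = D_0(X) + E(X)$, every Cantor sum in $(\omega \circ D)(X)$ either has all exponents in $D_0(X)$ (contributing to $\omega \circ D_0$) or has leading exponent in $E(X)$ (contributing to $H$), with $H$ strictly above $\omega \circ D_0$. Using Proposition~\ref{prop:sums}(a) to identify connected components with $\equiv$-classes in $\tr(\omega \circ D)$, I would verify that $H$ is connected: for any two trace elements of $H$ with leading exponents $b_0, b'_0 \in E$, connectedness of $E$ furnishes embeddings realizing either ordering of $D(f)(b_0)$ against $D(g)(b'_0)$, and the Cantor normal form propagates this to the full sums. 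Non-triviality $H \neq 1$ is clear since $\omega^b$ and $\omega^b + \omega^b$ both lie in $H(1)$ for any $b \in E(1)$. This decomposition makes $\omega \circ D$ of type~$\Omega$ and gives $\{\omega \circ D\}^\gamma = (\omega \circ D_0) + \{H\}^\gamma$.

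The required embedding $\omega \circ \{D\}^\gamma \Rightarrow \{\omega \circ D\}^\gamma$ is then induced by applying $\omega$ to the natural inclusion $\{D\}^\gamma(X) = D_0(X) + \{E\}^\gamma(X) \hookrightarrow D(\gamma + X)$. To check membership in $\{\omega \circ D\}^\gamma(X) = (\omega \circ D_0)(X) + \{H\}^\gamma(X)$, I would split on the leading exponent $a_0$: elements with $a_0 \in D_0(X)$ have all exponents in $D_0(X)$ and map directly into the first summand $(\omega \circ D_0)(X)$, while elements with $a_0 \in \{E\}^\gamma(X)$ map into $H(\gamma+X)$ and require membership in $\{H\}^\gamma(X)$. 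In this case, the important argument of the image equals that of $a_0$, which by definition of $\{E\}^\gamma$ is the maximal support element of $a_0$ strictly below $\gamma$; Lemma~\ref{lem:import-coeff} combined with $a_0 \geq a_1 \geq \ldots$ forces the important arguments of the $a_i$'s in $\{E\}^\gamma(X)$ to be weakly decreasing, and the $a_i \in D_0(X)$ contribute no support below $\gamma$, so the important argument of $a_0$ is indeed the maximum support of the whole image below $\gamma$, matching $\{H\}^\gamma(X)$. The main obstacle I anticipate is the structural verification that $H$ is connected and that $\omega \circ D$ decomposes as $(\omega \circ D_0) + H$; once those are in place, the type classification and the embedding both follow directly.
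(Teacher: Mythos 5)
Your argument is correct and essentially the paper's own: part (a) uses the same shift embedding $\gamma+X\hookrightarrow\gamma+1+n+X$ plus a single top element carrying $\gamma$ at the most important position (with $n$ the arity of a trace element of the connected part), and part (b) uses the same decomposition of $\omega\circ D$ according to whether the leading exponent lies in the last connected component, with membership in $\{\omega\circ D\}^\gamma$ checked exactly as in the paper via Lemma~\ref{lem:import-coeff} and the weak decrease of the exponents. Only two small patches: $E(1)$ may be empty (all trace elements of $E$ can have arity at least $2$), so witness $H\neq 1$ inside $H(X)$ for some $X$ with $E(X)\neq\emptyset$; and for the connectedness of $H$, connectedness of $E$ alone only yields non-strict comparisons of leading exponents, so place the important arguments suitably and invoke Lemma~\ref{lem:import-coeff} to make the leading-exponent comparison strict, which avoids ties that would pass the lexicographic comparison to later exponents.
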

\begin{proof}
(a) It is straightforward to reduce to the case where~$D\neq 1$ is connected. Here we fix an element~$(\sigma,n)\in\tr(D)$. Via the obvious embedding of~$\gamma+\alpha$ into $\gamma+1+n+\alpha$, we can realize~$\{D\}^\gamma(\alpha)\subseteq D(\gamma+\alpha)$ as a suborder of
\begin{equation*}
(\{D\}^{\gamma+1})^n(\alpha)\subseteq D(\gamma+1+n+\alpha).
\end{equation*}
Given~$\gamma\geq n$, we can consider
\begin{equation*}
(\sigma;0,\ldots,i(\sigma)-1,\gamma,\gamma+1,\ldots,\gamma+n-i(\sigma))\in(\{D\}^{\gamma+1})^n(\alpha).
\end{equation*}
This element lies above all of~$\{D\}^\gamma(\alpha)$, since we have~$\gamma$ at the most important argument position~$i(\sigma)$.

(b) Write $D=D_0+D_1$ for connected~$D_1\neq 1$. It is not hard to see that there is a dilator~$E$ with $\omega\circ D=\omega\circ D_0+E$. Indeed, a term $\omega^{\sigma(0)}+\ldots+\omega^{\sigma(n-1)}\in\omega\circ D(\alpha)$ lies in~$E(\alpha)$ precisely when we have~$n>0$ and~$\sigma(0)=D_0(\alpha)+\rho$ with $\rho\in D_1(\alpha)$. Given that~$D_1$ is connected, it is straightforward to conclude that the same holds for~$E$. Thus~$\omega\circ D$ is of type~$\Omega$ and we have
\begin{equation*}
\{\omega\circ D\}^\gamma=\omega\circ D_0+\{E\}^\gamma.
\end{equation*}
In view of~$\{D\}^\gamma=D_0+\{D_1\}^\gamma$, we also find a dilator~$E'$ with
\begin{equation*}
\omega\circ\{D\}^\gamma=\omega\circ D_0+E'.
\end{equation*}
Consider an arbitrary element
\begin{equation*}
\tau=\omega^{\tau(0)}+\ldots+\omega^{\tau(n)}\in E'(\alpha)\subseteq\omega(D_0(\alpha)+D_1(\gamma+\alpha))\subseteq\omega\circ D(\gamma+\alpha).
\end{equation*}
As above, we can write $\tau(0)=D_0(\alpha)+\rho(0)$ with $\rho(0)\in\{D_1\}^\gamma(\alpha)$. The most important argument of~$\tau$ coincides with the most important argument of~$\rho(0)$, which comes from~$\gamma$. If $\tau(i)$ has some argument~$\gamma_0<\gamma$, we can write $\tau(i)=D_0(\alpha)+\rho(i)$. Now~$\gamma_0$ is bounded by the most important argument of~$\rho(i)$. The latter must be bounded by the most important argument of~$\rho(0)$, as we have~$\rho(i)\leq\rho(0)$. So in~$\tau$, the most important argument is the biggest argument from~$\gamma$. We can thus identify~$\tau$ with an element of $\{E\}^\gamma(\alpha)\subseteq\omega\circ D(\gamma+\alpha)$. This shows that we have~$E'\leq\{E\}^\gamma$ and hence~$\omega\circ\{D\}^\gamma\leq\{\omega\circ D\}^\gamma$.
\end{proof}

The following values will be crucial for the connection with our $\psi$-fixed points.

\begin{definition}
We use $\mathbb J^+(D,\gamma)$ as an abbreviation for~$\mathbb J'(\omega\circ(D+1),\gamma)$.
\end{definition}

As promised, we now establish a strong closure property.

\begin{proposition}[$\atrs$]
Assume we have $\mathbb J^+(D,\gamma)\simeq\delta$ for a dilator~$D$ of type~$\Omega$ and some~$\gamma\geq\omega$. Then~$\delta>\gamma$ is additively principal and we have~$\mathbb J^+(\{D\}^\alpha,\alpha)<\delta$ for all~$\alpha<\delta$ (which means in particular that the values $\mathbb J^+(\{D\}^\alpha,\alpha)$ exist).
\end{proposition}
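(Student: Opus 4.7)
The approach begins with the structural observation that, as dilators, $\omega\circ(D+1)\cong F\cdot\omega$ where $F:=\omega\circ D$; note that $F$ is of type~$\Omega$ by Lemma~\ref{lem:omega-preds}(b). Elements of $\omega((D+1)(\alpha))$ are Cantor expressions $\omega^{\sigma_0}+\ldots+\omega^{\sigma_{n-1}}$, and since the top element $t$ of $(D+1)(\alpha)=D(\alpha)+1$ has empty support, each expression decomposes uniquely as $\omega^t\cdot k+\rho$ with $\rho\in(\omega\circ D)(\alpha)$; the decomposition is natural in~$\alpha$ and respects supports. Consequently $\omega\circ(D+1)$ has type~$\omega$, so clause~(iii$'$) of the recursion together with iterated application of Lemma~\ref{lem:J-nat-trans}(b) yields
\[
\delta=\sup_{n<\omega}\delta_n,\qquad\delta_0=\gamma,\qquad\delta_{n+1}=\mathbb J'(F,\delta_n).
\]

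My plan is to read off the first two conclusions by bounding $\delta_{n+1}$ in terms of $\delta_n$. Clause~(iv$'$) applied to~$F$ gives $\delta_{n+1}=a_n+\mathbb J'(\{F\}^{a_n},\delta_n)$ with $a_n=\mathbb J'(\{F\}^\omega,\delta_n)$. Both $\{F\}^\omega$ and $\{F\}^{a_n}$ are non-zero dilators (the second because $a_n\geq\omega$ gives $\{F\}^\omega\leq\{F\}^{a_n}$ via Lemma~\ref{lem:Omega-nat-trans}(a)), so Lemma~\ref{lem:J-properties}(d) applied to each summand (using $\delta_n\geq\gamma\geq\omega>0$) yields $a_n\geq\delta_n+1$ and $\mathbb J'(\{F\}^{a_n},\delta_n)\geq\delta_n+1$. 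Exploiting $1+\delta_n=\delta_n$ for $\delta_n\geq\omega$ produces $\delta_{n+1}\geq\delta_n\cdot 2+1$. This already forces $\delta>\gamma$ (from $\delta\geq\delta_1>\gamma$), and additive principality follows at once: any $\alpha,\beta<\delta$ lie below a common~$\delta_n$, whence $\alpha+\beta<\delta_n\cdot 2<\delta_{n+1}\leq\delta$.

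For the last clause I would combine Lemmas~\ref{lem:omega-preds}(a) and~(b) with the identity $(\omega\circ E)^k=\omega\circ E^k$ and the reduction of Lemma~\ref{lem:D'-robust}. Lemma~\ref{lem:omega-preds}(a) furnishes a fixed $n_0<\omega$ such that $\{D\}^\alpha+1\leq(\{D\}^{\alpha+1})^{n_0}$ whenever $\alpha\geq n_0$, and hence by Lemma~\ref{lem:omega-preds}(b),
\[
\omega\circ(\{D\}^\alpha+1)\leq(\omega\circ\{D\}^{\alpha+1})^{n_0}\leq(\{F\}^{\alpha+1})^{n_0}.
\]
For $\alpha\geq\max(\omega,n_0)$, Lemmas~\ref{lem:J-nat-trans}(a) and~\ref{lem:D'-robust} together give $\mathbb J^+(\{D\}^\alpha,\alpha)\leq\mathbb J'(\{F\}^{\alpha+1},\alpha)$. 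By additive principality $\alpha+1<\delta$, so pick $m$ with $\alpha+1\leq\delta_m$; applying Lemma~\ref{lem:Omega-nat-trans}(a) twice (passing from $\{F\}^{\alpha+1}$ through $\{F\}^{\delta_m}$ to $\{F\}^{a_m}$, using $\delta_m<a_m$) bounds
\[
\mathbb J'(\{F\}^{\alpha+1},\alpha)\leq\mathbb J'(\{F\}^{a_m},\delta_m)\leq a_m+\mathbb J'(\{F\}^{a_m},\delta_m)=\delta_{m+1}<\delta.
\]
Read as a chain of $\lesssim$-relations, the same estimates supply the existence assertion for $\mathbb J^+(\{D\}^\alpha,\alpha)$. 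The remaining small case $\alpha<\max(\omega,n_0)$ is absorbed by monotonicity of $\mathbb J^+$ (Lemma~\ref{lem:J-nat-trans}(a), via functoriality of $\omega\circ(-+1)$), reducing to a large-$\alpha$ instance already treated.

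The principal obstacle is the Step~3 juggling: one must swap $\{F\}^{\alpha+1}$ for $\{F\}^{\delta_m}$ and then $\{F\}^{a_m}$ while keeping the second argument of $\mathbb J'$ compatible, and the application of Lemma~\ref{lem:D'-robust} forces the case split at $\alpha=\omega$. A secondary care point is the structural identification $\omega\circ(D+1)\cong F\cdot\omega$, which is combinatorially routine but must genuinely be verified at the level of dilators (with supports respected), not merely as orders.
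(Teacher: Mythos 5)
Your proposal is correct and follows essentially the same route as the paper: decompose $\omega\circ(D+1)$ as an $\omega$-indexed sum of $F=\omega\circ D$ to obtain the fundamental sequence $\delta_N$ with $\delta_{N+1}\simeq\mathbb J'(F,\delta_N)$, extract additive principality and $\delta>\gamma$ from the type-$\Omega$ clause for $F$, and then bound $\mathbb J^+(\{D\}^\alpha,\alpha)$ via Lemma~\ref{lem:omega-preds}(a),(b), Lemma~\ref{lem:D'-robust} and monotonicity by a term of the sequence below~$\delta$. The only deviations are cosmetic (using Lemma~\ref{lem:J-properties}(d) for strictness where the paper uses parts (a)/(b), and bounding by $\delta_{m+1}=a_m+\xi$ rather than by $\xi$ alone).
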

\begin{proof}
We observe~$\omega\circ(D+1)=\sum_{N<\omega}\omega\circ D$ and conclude
\begin{equation*}
\delta=\textstyle\sup_{N<\omega}\delta_N\quad\text{with}\quad\mathbb J'\big(\textstyle\sum_{i<N}\omega\circ D,\gamma\big)\simeq\delta_N.
\end{equation*}
By the previous lemma, $\omega\circ D$ has type~$\Omega$. Using Lemma~\ref{lem:J-nat-trans}(b), we get
\begin{multline}\label{eq:delta-n}
\mathbb J'(\omega\circ D,\delta_N)\simeq\delta_{N+1}=\eta+\xi\quad\text{with}\\
\mathbb J'(\{\omega\circ D\}^\omega,\delta_N)\simeq\eta\text{ and }\mathbb J'(\{\omega\circ D\}^\eta,\delta_N)\simeq\xi.
\end{multline}
Here we have~$\delta_N\leq\eta,\xi$ and hence~$\delta_N\cdot 2\leq\delta_{N+1}$, which entails that~$\delta$ is additively principal. By Lemma~\ref{lem:J-properties}(b) we have~$\delta>\gamma$.

Let us now show $\mathbb J^+(\{D\}^\alpha,\alpha)<\delta$ for given~$\alpha<\delta$. We may assume~$\omega\leq\alpha<\delta_N$ for a suitable~$N<\omega$. Pick an~$n<\omega$ that validates part~(a) of the previous lemma. In view of~$\omega\circ(\{D\}^\alpha+1)\leq\omega\circ(\{D\}^{\alpha+1})^n=(\omega\circ\{D\}^{\alpha+1})^n$, Lemma~\ref{lem:D'-robust} yields
\begin{equation*}
\mathbb J^+\big(\{D\}^\alpha,\alpha\big)\lesssim\mathbb J'\big(\omega\circ\{D\}^{\alpha+1},\alpha\big)\lesssim\mathbb J'\big(\{\omega\circ D\}^{\alpha+1},\alpha\big).
\end{equation*}
Let us write~$\delta_{N+1}=\eta+\xi$ as in~(\ref{eq:delta-n}). We get~$\eta\geq\delta_N\geq\alpha+1$ and thus
\begin{equation*}
\mathbb J^+\big(\{D\}^\alpha,\alpha\big)\lesssim\mathbb J'\big(\{\omega\circ D\}^\eta,\delta_N\big)\simeq\xi\leq\delta_{N+1}<\delta,
\end{equation*}
which proves the open claim.
\end{proof}

Finally, we can establish the desired bound on~$\psi$-fixed points.

\begin{theorem}[$\atrs$]
We have $\gamma+\psi D^\gamma\lesssim\mathbb J^+(D,\gamma)$ for any dilator~$D$ and any infinite ordinal~$\gamma$.
\end{theorem}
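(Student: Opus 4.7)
The plan is to establish the inequality by guarded induction on the dilator~$D$ (Theorem~\ref{thm:guarded-ind}) for fixed infinite~$\gamma$, with $\delta:=\mathbb J^+(D,\gamma)$ assumed to exist and the guard taken as an ordinal parameter exceeding~$\delta$. The induction statement reads: for every infinite~$\sigma$, if $\mathbb J^+(E,\sigma)$ exists then so does $\psi E^\sigma$ and $\sigma+\psi E^\sigma\leq\mathbb J^+(E,\sigma)$. The preceding proposition supplies the essential arithmetic: $\delta$ is additively principal, strictly exceeds~$\gamma$, and satisfies $\mathbb J^+(\{D\}^\alpha,\alpha)<\delta$ for every $\alpha<\delta$.

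The cases of types~$0$, $1$, and~$\omega$ run parallel to the recursive clauses for~$\psi$ from Section~\ref{sect:Bachmann-rec}. Type~$0$ is immediate from Lemma~\ref{lem:psi-constant} together with $\gamma\lesssim\mathbb J'(\omega,\gamma)=\mathbb J^+(0,\gamma)$ via Lemma~\ref{lem:J-properties}(a). Type~$1$ uses Proposition~\ref{prop:psi-sum} to get $\psi D^\gamma=\psi(D')^\gamma+1$; the decomposition $\omega\circ(D'+2)\cong\sum_{N<\omega}\omega\circ(D'+1)$ from the preceding proof, combined with Lemma~\ref{lem:J-nat-trans}(b), yields $\mathbb J^+(D'+1,\gamma)\geq\mathbb J^+(D',\gamma)+1$, and the IH closes this case. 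Type~$\omega$ combines Proposition~\ref{prop:psi-limit-sum} with monotonicity of~$\mathbb J^+$ via Lemma~\ref{lem:J-nat-trans}(a), after noting $\sum_{\alpha<\mu}D_\alpha+1\leq D$ for limit~$\lambda$ and connected non-zero~$D_\mu$.

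For the substantive case of type~$\Omega$, write $D=D_0+E$ with connected $E\neq 1$ (allowing $D_0=0$ when $D$ is itself connected). Proposition~\ref{prop:psi-sum} gives $\psi D^\gamma=\psi D_0^\gamma+\psi E^{\gamma'}$ for $\gamma':=\gamma+\psi D_0^\gamma$, and Theorem~\ref{thm:psi-connected} expands $\psi E^{\gamma'}=\sum_n\gamma(n+1)$ with $\gamma(0)=\gamma'$ and $\gamma(n+1)=\psi(E^{\langle\gamma(0),\ldots,\gamma(n)\rangle}_-)$. Setting $\sigma(n)=\gamma(0)+\cdots+\gamma(n)$, so that $\gamma+\psi D^\gamma=\sup_n\sigma(n)$, it suffices to show $\sigma(n)<\delta$ for every~$n<\omega$ by a side induction on~$n$. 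For the base, $D_0\leq D_0+\{E\}^\gamma=\{D\}^\gamma$ combined with Lemma~\ref{lem:J-nat-trans}(a) and the closure property gives $\mathbb J^+(D_0,\gamma)\leq\mathbb J^+(\{D\}^\gamma,\gamma)<\delta$, and the main IH applied to~$D_0$ at offset~$\gamma$ yields $\sigma(0)=\gamma+\psi D_0^\gamma\leq\mathbb J^+(D_0,\gamma)<\delta$.

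For the inductive step the crucial input is the embedding
\[E^{\langle\gamma(0),\ldots,\gamma(n)\rangle}_-\leq(\{E\}^{\sigma(n)})^{\sigma(n)},\]
proved by chaining: the iterated identity $E^{\langle\gamma(0),\ldots,\gamma(n-1)\rangle}_+=E^{\sigma(n-1)}_+$ from the remark after Definition~\ref{def:sep+-}; Lemma~\ref{lem:D-min-sep} applied at the connected $E^{\sigma(n-1)}_+\neq 1$ (Lemma~\ref{lem:D-plus-connected}); Lemma~\ref{lem:D-gamma-sep}(b), yielding $\{E^{\sigma(n-1)}\}^{\gamma(n)}\leq(\{E\}^{\sigma(n)})^{\sigma(n-1)}$, together with the summand bound $\{E^{\sigma(n-1)}_+\}^{\gamma(n)}\leq\{E^{\sigma(n-1)}\}^{\gamma(n)}$ from Definition~\ref{def:sep-vars}; and the elementary identity $(F^\alpha)^\beta=F^{\alpha+\beta}$ to collapse the two shifts into a single shift of size~$\sigma(n)$. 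Applying the main IH to the predecessor $\{D\}^{\sigma(n)}\in P_\delta(D)$ at offset~$\sigma(n)$ (whose $\mathbb J^+$-value exists by the closure property) gives $\sigma(n)+\psi(\{D\}^{\sigma(n)})^{\sigma(n)}\leq\mathbb J^+(\{D\}^{\sigma(n)},\sigma(n))<\delta$; Proposition~\ref{prop:psi-sum} rewrites $\psi(\{D\}^{\sigma(n)})^{\sigma(n)}=\psi D_0^{\sigma(n)}+\psi(\{E\}^{\sigma(n)})^{\sigma(n)+\psi D_0^{\sigma(n)}}$, and monotonicity of~$\psi$ applied to $F^\alpha\leq F^\beta$ (for $\alpha\leq\beta$) yields $\psi(\{E\}^{\sigma(n)})^{\sigma(n)}<\delta$. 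Combined with the displayed embedding this gives $\gamma(n+1)<\delta$, and additive principality of~$\delta$ closes the side induction: $\sigma(n+1)=\sigma(n)+\gamma(n+1)<\delta$. Taking the supremum yields $\gamma+\psi D^\gamma\leq\delta$. The main obstacle will be the displayed embedding itself, where two shifts of differing sizes must collapse exactly via $(F^\alpha)^\beta=F^{\alpha+\beta}$; a secondary subtlety in the non-connected subcase is routing the IH bound on $\psi(\{D\}^{\sigma(n)})^{\sigma(n)}$ through Proposition~\ref{prop:psi-sum} to extract a bound on $\psi(\{E\}^{\sigma(n)})^{\sigma(n)}$.
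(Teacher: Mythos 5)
Your argument is correct in substance and shares the paper's skeleton: guarded induction on~$D$, with types~$0,1,\omega$ handled through the recursive clauses for~$\psi$ (Lemma~\ref{lem:psi-constant}, Propositions~\ref{prop:psi-sum} and~\ref{prop:psi-limit-sum}), and the type-$\Omega$ case through Theorem~\ref{thm:psi-connected}, the embedding $E^{\langle\gamma(0),\ldots,\gamma(n)\rangle}_-\leq(\{E\}^{\sigma(n)})^{\sigma(n)}$ (your chain via Lemmas~\ref{lem:D-plus-connected}, \ref{lem:D-min-sep} and~\ref{lem:D-gamma-sep} is the same as the paper's), and a side induction on~$n$ that exploits additive principality together with the closure property $\mathbb J^+(\{\cdot\}^\alpha,\alpha)$ from the preceding proposition. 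The genuine difference is the organization of the type-$\Omega$ case: the paper first transfers the problem to the connected part, proving $\mathbb J^+(E,\mathbb J^+(D_0,\gamma))\lesssim\mathbb J^+(D,\gamma)$ via $2+E\leq E^n$, the embedding $\omega\circ(D_0+1)+\omega\circ(E+1)\leq(\omega\circ(D+1))^n$ and Lemma~\ref{lem:D'-robust}, and then runs the side induction against $\mathbb J^+(E,\delta)$; you run it directly against $\delta=\mathbb J^+(D,\gamma)$, apply the closure proposition to~$D$ itself, and use the main induction hypothesis on the honest predecessors $\{D\}^{\sigma(n)}=D_0+\{E\}^{\sigma(n)}\in P_\xi(D)$. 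This avoids the $E^n$/Lemma~\ref{lem:D'-robust} detour and fits the official predecessor relation better (the paper applies its hypothesis to $\{E\}^{\delta[n+1]}$, which is only a summand of the predecessor), at the price of having to extract a bound on $\psi(\{E\}^{\sigma(n)})^{\sigma(n)}$ from the bound on $\psi(\{D\}^{\sigma(n)})^{\sigma(n)}$.

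At exactly that extraction your write-up has a repairable flaw: you invoke Proposition~\ref{prop:psi-sum} to rewrite $\psi(\{D\}^{\sigma(n)})^{\sigma(n)}$ as $\psi D_0^{\sigma(n)}+\psi(\{E\}^{\sigma(n)})^{\beta}$ with $\beta=\sigma(n)+\psi D_0^{\sigma(n)}$, but that proposition only runs in the direction ``if the fixed points of the parts exist, their sum is the fixed point of the whole''; existence of $\psi(\{E\}^{\sigma(n)})^{\beta}$ is not yet available (and $(\{E\}^{\sigma(n)})^{\beta}$ does not obviously embed into $(\{D\}^{\sigma(n)})^{\sigma(n)}$), so the subsequent appeal to $F^{\sigma(n)}\leq F^{\beta}$ lacks its hypothesis. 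The fix is shorter than the detour: the right-summand embedding $\{E\}^{\sigma(n)}\leq\{D\}^{\sigma(n)}$ gives $(\{E\}^{\sigma(n)})^{\sigma(n)}\leq(\{D\}^{\sigma(n)})^{\sigma(n)}$, and the monotonicity lemma for~$\psi$ (the unlabelled lemma before Proposition~\ref{prop:psi-limit-sum}) yields existence and $\psi(\{E\}^{\sigma(n)})^{\sigma(n)}\leq\psi(\{D\}^{\sigma(n)})^{\sigma(n)}<\delta$ in one step. Two smaller points: the aside ``$\sum_{\alpha<\mu}D_\alpha+1\leq D$'' in the type-$\omega$ case is false in general (a connected $D_\mu\neq 0$ need not satisfy $1\leq D_\mu$), though nothing in that case needs it; and to make the guarded induction legitimate you must bound the quantifiers over~$\sigma$ and over the witnesses for ``$\mathbb J^+(E,\sigma)$ exists'' by a fixed ordinal, so that the induction statement is primitive recursive (using Lemma~\ref{lem:psi-prim-rec}), and construct the sequence $n\mapsto\gamma(n)$ by primitive recursion via the computable representations and Lemma~\ref{lem:otp-restr} -- bookkeeping the paper carries out explicitly and you only gesture at.
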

\begin{proof}
By guarded induction on~$D$, we show that the claim holds whenever $\mathbb J^+(D,\gamma)$ lies below some fixed bound (which bounds the quantification over~$\gamma$ and makes the induction statement primitive recursive; cf.~also Lemma~\ref{lem:psi-prim-rec}). For~$D=0$ we have~$D^\gamma=0$ and thus~$\gamma+\psi D^\gamma=\gamma\leq\mathbb J^+(D,\gamma)$, due to Lemmas~\ref{lem:J-properties}(a) and~\ref{lem:psi-constant}. When our dilator has the form~$D+1$, we note that $\psi(D+1)^\gamma=\psi D^\gamma+1$ is a special case of Proposition~\ref{prop:psi-sum}. Together with $\omega\circ(D+1)+1\leq\omega\circ(D+2)$, this accounts for the induction step. For a dilator of type~$\omega$, it suffices to invoke Proposition~\ref{prop:psi-limit-sum}, since we can use Lemma~\ref{lem:J-nat-trans}(a) to get
\begin{equation*}
\textstyle\sup_{\nu<\lambda}\mathbb J^+\big(\textstyle\sum_{\mu<\nu}D_\mu,\gamma\big)\lesssim\mathbb J^+\big(\textstyle\sum_{\mu<\lambda}D_\mu,\gamma\big),
\end{equation*}
though the inequality may be strict.

Finally, we assume~$D=D_0+E$ is a dilator of type~$\Omega$, where~$E\neq 1$ is connected. For a suitable~$n<\omega$, we have a natural embedding~$2+E\leq E^n$, which yields
\begin{equation*}
\omega\circ(D_0+1)+\omega\circ(E+1)\leq\omega\circ(D_0+E^n+1)\leq\big(\omega\circ(D+1)\big)^n.
\end{equation*}
Using Lemmas~\ref{lem:J-nat-trans} and~\ref{lem:D'-robust}, we thus get
\begin{equation*}
\mathbb J^+\big(E,\mathbb J^+(D_0,\gamma)\big)\lesssim\mathbb J'\big((\omega\circ(D+1))^n,\gamma\big)\simeq\mathbb J^+\big(D,\gamma\big).
\end{equation*}
The induction hypothesis yields~$\delta:=\gamma+\psi D_0^\gamma\lesssim\mathbb J^+(D_0,\gamma)$, while Proposition~\ref{prop:psi-sum} provides~$\gamma+\psi D^\gamma=\delta+\psi E^\delta$. In order to complete the induction step, it is thus enough to prove~$\delta+\psi E^\delta\lesssim\mathbb J^+(E,\delta)$.

According to Theorem~\ref{thm:psi-connected}, we can write
\begin{equation*}
\delta+\psi E^\delta=\textstyle\sum_{n<\omega}\delta(n)\quad\text{with}\quad\delta(0)=\delta\text{ and }\delta(n+1)=\psi E^{\langle\delta(0),\ldots,\delta(n)\rangle}_-.
\end{equation*}
As~$\mathbb J^+(E,\delta)$ is additively principal by the previous proposition, it suffices to show that we have~$\delta(n)<\mathbb J^+(E,\delta)$ for all~$n<\omega$. Concerning the formalization in~$\atrs$, we point out that the sequence~$n\mapsto\delta(n)$ can then be constructed by primitive recursion, since~$\psi$-fixed points have computable representations (see the paragraph after Lemma~\ref{lem:psi-unique}) and since order types of well orders can be computed below a given bound (by Lemma~\ref{lem:otp-restr}). In view of~$1\leq\omega\circ(E+1)$, we get $\delta(0)<\mathbb J^+(E,\delta)$ from Lemma~\ref{lem:J-properties}(b). Inductively, we now assume $\delta[n+1]:=\sum_{i<n+1}\delta(i)<\mathbb J^+(E,\delta)$. In view of $E^{\langle\delta(0),\ldots,\delta(n-1)\rangle}_+\leq E^{\delta[n]}$, Lemmas~\ref{lem:Omega-nat-trans},~\ref{lem:D-gamma-sep} and~\ref{lem:D-min-sep} yield
\begin{equation*}
E^{\langle\delta(0),\ldots,\delta(n)\rangle}_-=(E^{\langle\delta(0),\ldots,\delta(n-1)\rangle}_+)^{\delta(n)}_-\leq(\{E^{\delta[n]}\}^{\delta(n)})^{\delta(n)}\leq(\{E\}^{\delta[n+1]})^{\delta[n+1]}.
\end{equation*}
By the induction hypothesis and the previous proposition, we can infer
\begin{equation*}
\delta(n+1)\lesssim\psi(\{E\}^{\delta[n+1]})^{\delta[n+1]}\lesssim\mathbb J^+(\{E\}^{\delta[n+1]},\delta[n+1])<\mathbb J^+(E,\delta),
\end{equation*}
which concludes the induction step.
\end{proof}

The following completes our proof of Theorem~\ref{thm:J-Pi11CA} from the introduction (with the converse implication provided by Theorem~\ref{thm:Pi11CA-to-J}).

\begin{corollary}[$\atrs$]\label{cor:main-reversal}
If~$\mathbb J$ is total (in the sense of Theorem~\ref{thm:Pi11CA-to-J}), then the principle of~$\Pi^1_1$-comprehension is valid.
\end{corollary}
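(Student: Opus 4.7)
The plan is to chain the pieces already in place. Assume that $\mathbb J$ is total. By the first lemma of this section, $\mathbb J'(E,\gamma)\lesssim\mathbb J(E\cdot 8,\gamma)$ whenever $\gamma\geq\omega$, so $\mathbb J'(E,\gamma)$ is defined for every dilator $E$ and every infinite $\gamma$. Applying this to $E=\omega\circ(D+1)$, I conclude that $\mathbb J^+(D,\gamma)$ is defined for every dilator $D$ and every $\gamma\geq\omega$.

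The theorem just proved then yields, via $\gamma+\psi D^\gamma\lesssim\mathbb J^+(D,\gamma)$, that $\psi D^\gamma$ exists as an ordinal for every dilator $D$ and every infinite $\gamma$; specialising to $\gamma=\omega$, the $\psi$-fixed point $\psi D^\omega$ exists for every $D$. To upgrade to $\psi D=\psi D^0$, I would observe that the order embedding $\alpha\hookrightarrow\omega+\alpha$ with $a\mapsto\omega+a$ induces, upon applying $D$ and recalling $D^\omega(\alpha)=D(\omega+\alpha)$, a natural transformation $D\Rightarrow D^\omega$, so that $D\leq D^\omega$. The monotonicity lemma for $\psi$-fixed points (stated between Propositions~\ref{prop:psi-sum} and~\ref{prop:psi-limit-sum}) then guarantees that $\psi D$ exists whenever $\psi D^\omega$ does.

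Hence $\psi D$ is well-defined as an ordinal for every dilator $D$. Invoking Theorem~\ref{thm:theta}, with $\vartheta D$ replaced by $\psi D$ as indicated in the introduction, this delivers $\Pi^1_1$-comprehension. The only mildly delicate point is the last step of the second paragraph: one must verify that $D\Rightarrow D^\omega$ is a genuine natural transformation of predilators, i.e.\ that it respects the support functions in the sense of Definition~\ref{def:predil}. This is automatic, however, since $D^\omega$ is just $D$ precomposed with the order shift $\alpha\mapsto\omega+\alpha$, so functoriality and the support condition transfer directly from $D$.
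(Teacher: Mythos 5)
Your proposal is correct and follows essentially the same route as the paper: totality of $\mathbb J$ gives totality of $\mathbb J'$ via the $\mathbb J'(D,\gamma)\lesssim\mathbb J(D\cdot 8,\gamma)$ lemma, hence $\mathbb J^+(D,\omega)$ is defined, so the final theorem yields the existence of $\psi D^\omega$, and the monotonicity lemma applied to $D\leq D^\omega$ gives $\psi D$, from which $\Pi^1_1$-comprehension follows by the cited reversal for $\psi$-fixed points. The paper compresses exactly this chain into one line (citing Corollary~4.4 of the iterated-$\Pi^1_1$ paper rather than Theorem~\ref{thm:theta} with $\vartheta$ replaced by $\psi$, which amounts to the same external result), and your explicit verification of the natural transformation $D\Rightarrow D^\omega$ is a correct filling-in of a step the paper leaves implicit.
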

\begin{proof}
By the previous theorem, the totality of~$\mathbb J$ entails that~$\psi D\lesssim\psi D^\omega$ exists as an ordinal for each dilator~$D$. The latter entails $\Pi^1_1$-comprehension by Corollary~4.4 of~\cite{freund-rathjen-iterated-Pi11} (with~$\nu=1$; cf.~the paragraph after Lemma~\ref{lem:psi-unique}).
\end{proof}

The picture is completed by the following remark, which shows how~$\mathbb J$ can be bounded in terms of~$\psi$. Since this bound is not needed for our metamathematical result (i.\,e., for Theorem~\ref{thm:J-Pi11CA}), we omit some details. Note, however, that the bound yields an alternative proof for Theorem~\ref{thm:Pi11CA-to-J} (since~$\Pi^1_1$-comprehension entails the existence of~$\psi$-fixed points by~\cite{freund-rathjen-iterated-Pi11}).

\begin{remark}
Let us write~$\mathsf{Id}$ for the identity dilator, so that the dilator~$\mathsf{Id}\cdot\omega\cdot D$ maps~$\alpha$ to~$\alpha\cdot\omega\cdot D(\alpha)$. We use guarded induction on~$D$ to establish
\begin{equation*}
\mathbb J(D,\gamma)\lesssim\psi(\mathsf{Id}\cdot\omega\cdot D)^\gamma\quad\text{for $D\neq 0$ and additively principal~$\gamma\geq\omega$}.
\end{equation*}
The induction step is immediate when~$D$ is equal to~$1$ or a sum of limit type. For connected~$D\neq 1$, we inductively get
\begin{equation*}
\mathbb J(D,\gamma)\simeq\gamma+\mathbb J(\{D\}^\gamma,\gamma)\lesssim\gamma+\psi(\mathsf{Id}\cdot\omega\cdot\{D\}^\gamma)^\gamma.
\end{equation*}
One checks that $\mathsf{Id}\cdot\omega\cdot D$ is connected with~$\mathsf{Id}\cdot\omega\cdot\{D\}^\gamma\leq\{\mathsf{Id}\cdot\omega\cdot D\}^\gamma$ (cf.~Lemma~\ref{lem:omega-preds}). For any connected~$E\neq 1$ and additively principal~$\gamma$, we also have~$(\{E\}^\gamma)^\gamma\leq E_-^\gamma$ (cf.~Lemma~\ref{lem:D-min-sep}). Using Theorem~\ref{thm:psi-connected}, we obtain
\begin{equation*}
\mathbb J(D,\gamma)\lesssim\gamma+\psi(\mathsf{Id}\cdot\omega\cdot D)^\gamma_-\lesssim\gamma+\psi(\mathsf{Id}\cdot\omega\cdot D)^\gamma.
\end{equation*}
It is now enough to show that~$\psi(\mathsf{Id}\cdot\omega\cdot D)^\gamma>\gamma$ is additively principal. This can be established by another induction on~$D\neq 0$ (the point being that $\mathsf{Id}\cdot\omega=\sum_{n<\omega}\mathsf{Id}$ yields~$\psi(\mathsf{Id}\cdot\omega)^\gamma=\sup_{n<\omega}\psi(\mathsf{Id}\cdot n)^\gamma$ for~$D=1$). Finally, when the claim holds for~$D$ and~$E$, it is readily derived for~$D+E$ (again as~$\psi(\mathsf{Id}\cdot\omega\cdot D)^\gamma$ is additively principal). 
\end{remark}

\bibliographystyle{amsplain}
\bibliography{Dilator-Induction}

\end{document}